\setlist[enumerate]{leftmargin=.5in}
\setlist[itemize]{leftmargin=.3in}
\newcommand{\lb}{\lambda}
\newcommand{\argmin}{\arg\min}
\newcommand{\st}{\textnormal{s.t.}}
\newcommand{\zz}{^\top}
\newcommand{\dist}{\textnormal{dist}}
\newcommand{\vgamma}{{\boldsymbol{\gamma}}}
\newcommand{\vxi}{{\boldsymbol{\xi}}}
\newcommand{\vzeta}{{\boldsymbol{\zeta}}}
\newcommand{\va}{{\mathbf{a}}}
\newcommand{\ve}{{\mathbf{e}}}
\newcommand{\vg}{{\mathbf{g}}}
\newcommand{\vt}{{\mathbf{t}}}
\newcommand{\vu}{{\mathbf{u}}}
\newcommand{\vw}{{\mathbf{w}}}
\newcommand{\vx}{{\mathbf{x}}}
\newcommand{\vy}{{\mathbf{y}}}
\newcommand{\vz}{{\mathbf{z}}}
\newcommand{\cA}{{\mathcal{A}}}
\newcommand{\cB}{{\mathcal{B}}}
\newcommand{\cD}{{\mathcal{D}}}
\newcommand{\cJ}{{\mathcal{J}}}
\newcommand{\cN}{{\mathcal{N}}}
\newcommand{\cX}{{\mathcal{X}}}
\newcommand{\EE}{\mathbb{E}} % expectation
\newcommand{\RR}{\mathbb{R}} % real
\newcommand{\vzero}{\mathbf{0}} % 0 vector
\newcommand{\dom}{{\mathrm{dom}}} % domain
\newcommand{\bc}{\begin{center}}
	\newcommand{\ec}{\end{center}}
\newcommand{\bdm}{\begin{displaymath}}
	\newcommand{\edm}{\end{displaymath}}
\newcommand{\beq}{\begin{equation}}
	\newcommand{\eeq}{\end{equation}}
\newcommand{\bfl}{\begin{flushleft}}
	\newcommand{\efl}{\end{flushleft}}
\newcommand{\bt}{\begin{tabbing}}
	\newcommand{\et}{\end{tabbing}}
\newcommand{\beqn}{\begin{eqnarray}}
	\newcommand{\eeqn}{\end{eqnarray}}
\newcommand{\beqs}{\begin{align*}} % no equation numbers
	\newcommand{\eeqs}{\end{align*}}  % no equation numbers
\newtheorem{assumption}{Assumption}
\newtheorem{example}{Example}[section]
\crefname{hypothesis}{Hypothesis}{Hypotheses}
\title{A SPIDER-type stochastic subgradient method for expectation-constrained nonconvex nonsmooth optimization%\thanks{29 March, 2021.
	%	\funding{}}
        }
\author{Wei Liu\thanks{Department of Mathematical Sciences, Rensselaer Polytechnic Institute, Troy, NY
		(\email{lwdsdqqb@gmail.com, xuy21@rpi.edu}).} \and Yangyang Xu\footnotemark[1]}
\begin{document}
	
	%	\begin{CJK}{UTF8}{bsmi}
		\maketitle
		\begin{abstract}	
			Many real-world problems, such as those with fairness constraints, involve complex expectation constraints and large datasets, necessitating the design of efficient stochastic methods to solve them. Most existing research focuses on cases, {which have no constraint, or have constraints that admit an easy projection, or only have deterministic constraints.} %with no {constraint}, easy-to-project constraints, or deterministic constraints.
			In this paper, we consider {weakly convex} stochastic optimization problems with expectation constraints, for which we build an exact penalty model.  
			We first show the relationship between the penalty model and the original problem. 
			Then on solving the penalty problem, we present 
			a SPIDER-type stochastic subgradient
			 method, which utilizes the subgradients of 
			both the objective and constraint functions, as well as the constraint function value at each iteration. 
		Under {the Slater-type constraint qualification (CQ)}, we establish an iteration complexity result of $O(\epsilon^{-4})$  to reach
			a near-$\epsilon$ stationary point of the penalized problem in expectation, matching the lower bound for such tasks. Building on the exact penalization, an $(\epsilon,\epsilon)$-KKT point of the original problem is obtained. 
            For a few scenarios, our complexity of either the {objective} %constraint 
            sample subgradient or the constraint sample function values can be lower than the state-of-the-art results in \cite{boob2023stochastic,huang2023single,ma2020quadratically} by a factor of $\epsilon^{-2}$.  
            Moreover, 
            on solving two fairness-constrained problems and {a multi-class Neyman-Pearson classification problem}, our method is significantly (up to 466 times in terms of data pass) faster than the state-of-the-art algorithms, including the switching subgradient method  in~\cite{huang2023single} and the inexact proximal point methods in~\cite{boob2023stochastic,ma2020quadratically}.
		\end{abstract}
		
		\begin{keywords}
			stochastic, subgradient,  expectation constraints, weakly convex, fairness-constrained classification.
		\end{keywords}
		
		\begin{AMS}
			90C06, 90C15, 49M37, 65K05
		\end{AMS}
		
		\section{Introduction}
	Stochastic optimization is fundamental to the statistical learning %sciences 
	and serves as the backbone of data-driven learning processes. 
	In this paper, we study this topic by designing first-order methods (FOMs) for solving the \textit{expectation-constrained nonconvex programming}: 
	\begin{equation}\label{eq:model}
		\min_{ \vx\in\mathcal{X}} \,\, %\underbrace{\mathbb{E}_{\xi_f \sim \mathbb{P}_f}[f(\vx, \xi_f)]}_{=: 
			{f(\vx)}, \text{ s.t. } \underbrace{\mathbb{E}_{\xi_{\vg} \sim \mathbb{P}_{\vg}}[\vg(\vx, \xi_{\vg})]}_{=:\vg(\vx)}\leq 0. 
			\tag{P}
			%\\& \textbf{h}( \vx):=[h_1( \vx),\ldots, h_m( \vx)]\leq 0,
		\end{equation}
		Here, {$\mathcal{X}\subset \RR^d$ is a closed convex set that allows a computationally easy projection operator,  \(\vg : \RR^d \to \RR^m\) is a vector-valued function consisting of \(m\) components \(g_1, \ldots, g_m\), and \(\mathbb{P}_{\vg}\) is an unknown distribution from which we can draw independent samples. %Specifically, 
		We will consider both general distributions and the special case where \(\mathbb{P}_{\vg}\) is uniform over a finite sample set; for the latter we can obtain deterministic subgradient of each $g_i$ if needed. For $f$, we assume only its stochastic subgradient is available.} %while $f$ is always assumed to be stochastic.  
		%Notice that a problem with multiple constraints $g_i(\vx) \leq 0, i = 1,\ldots, m$ can be formulated into~\cref{eq:model} by setting $\vg(\vx) \equiv \max_{i=1,\ldots, m} g_i(\vx)$. 
		
		Substantial research has been devoted to FOMs for solving function-constrained optimization problems. %as illustrated by a range of studies~\cite{davis2019stochastic, davis2019proximally, ghadimi2016accelerated,ghadimi2013stochastic}. 
		Among these FOMs, most are developed for deterministic function-constrained problems or for expectation-constrained convex optimization. This is well-documented in the literature for convex scenarios~\cite{aybat2011first,bayandina2018mirror,lan2013iteration-penalty,lan2016iteration-alm,lan2020algorithms,lin2018level-ICML,liu2016iALM,xu2020primal,xu2021first-ALM,yu2017simple} and nonconvex scenarios~\cite{andreani2011sequential,andreani2018strict,cartis2014complexity,cui2025two,facchinei2021ghost,grimmer2025goldstein,jia2025first,liu2022linearly,liu2022inexact,martinez2003practical,shi2025momentum}. Only a few works (e.g.,~\cite{alacaoglu2024complexity, boob2023stochastic, huang2023single, li2024stochastic}) have studied FOMs for expectation-constrained nonconvex optimization. %In the following, we provide a comprehensive review of current FOMs that apply to nonconvex constrained problems.
		We focus on nonconvex nonsmooth expectation-constrained problems in the form of~\cref{eq:model}. 
		Throughout this paper, we make the following  assumption.
        \newpage
		\begin{assumption}
			\label{ass:problemsetup}
			The following statements hold. %\yx{Check if Assumption 1(a) is precise.}
			
			$\mathrm{(a)}$  The set {$\mathcal{X} \cap \{\vx: \vg(\vx) \le 0\}$} is nonempty,  and $\mathcal{X}$ is bounded.  Specifically, there exists $D>0$ such that  $\|\vx_1-\vx_2\|\leq D$ for any $\vx_1,\vx_2\in \mathcal{X}$.

			$\mathrm{(b)}$ The function  $f$  is $\rho_f$-weakly convex {on~$\cX$},  and $g_i$  is $\rho_g$-weakly convex on~$\cX$ for some $\rho_f \ge 0$, $\rho_g\ge 0$ and each $i\in[m]:=\{1,\ldots,m\}$. That is, $f+\frac{\rho_f}{2}\|\cdot\|^2$ and $g_i+\frac{\rho_g}{2}\|\cdot\|^2$ are convex {on~$\cX$} for each $i\in[m]$.
			
			$\mathrm{(c)}$ 
			The subdifferential sets of $f$ and $g_i$ are nonempty {in their respective domains} for each $i\in[m]$. In addition, it holds that $\|\vzeta_f\| \leq l_f$ and  $\|\boldsymbol{\vzeta}_{g_i}\| \leq l_g$ for all $\vzeta_f\in\partial f(\vx)$ and $\boldsymbol{\vzeta}_{g_i}\in\partial g_i(\vx)$ for any~{$\vx\in\mathcal{X}$ and any $i\in[m]$.}  
		\end{assumption}
		%In the above assumption, $\dom$ denotes the domain set. , 
		From items (a) and (c) in the above assumption, we have 
		\begin{equation}\label{eq:bd-f-diff}
		|f(\vx_1) -f(\vx_2)| \leq l_f D=: B_f,\text{ for any }\vx_1,\vx_2\in \mathcal{X}.
		\end{equation}
		{In item (c), $\partial$ refers to the   subdifferential, see~\cref{def: subdiff}. Item (c) holds naturally when $f$ and $\{g_i\}$ are all Lipschitz continuous.}
		%{In item (b), we allow $\rho_g=0$, which indicates the convexity of $g$. However, we always assume $\rho_f>0$ and thus consider a nonconvex  objective function $f$.}
		
		\vspace{1mm}
		
		%{\color{red}The complexity comparison is not included in this section. Should we introduce the complexity measure here?}
		\noindent\textbf{Complexity {measures}.}~~We will focus on three %\remove{key} 
		complexity measures: {objective subgradient complexity (OGC)}, {constraint subgradient complexity (CGC)} and {constraint function value complexity (CFC)}%\footnote{
			%Importantly, the computation of the function value is often less expensive than computing its subgradient. In fact, in many real-world applications, particularly those arising in machine learning, subgradient computation can take two to five times the time required for function evaluation~\cite{bolte2022complexity}.}
			, detailed as follows:
\vspace{1mm}			
		\begin{center}
			OGC and CGC denote the number of calls to the sample subgradients of \(f\) and~\(\vg\), respectively, while CFC represents the number of evaluations of the sample constraint function \(\vg(\cdot, \xi_{\vg})\).
		\end{center}
		
		\vspace{1mm}
		
		The nonsmooth expectation constraints in~\cref{eq:model} pose significant challenges for designing efficient and reliable stochastic methods, as the projection onto an expectation-constraint set is prohibitively expensive.   
		To {circumvent} the costly projection step, {several} methods such as online convex method~\cite{yu2017online}, augmented Lagrangian method (ALM)~\cite{li2024stochastic}, inexact proximal point method (IPP)~\cite{boob2023stochastic,jia2025first, ma2020quadratically} and switching subgradient method (SSG)~\cite{huang2023single,lan2020algorithms} have been explored. {Among these methods, the online convex method, ALM, and IPP all require at least an inner subsolver to address their respective subproblems.
		The online convex method~\cite{yu2017online} is developed specifically for problems with convex constraints. %, {relies on} a restrictive assumption that the noise associated with stochastic constraints is bounded. 
		With nonconvex constraints and under suitable regularity conditions,}  Li et al.~\cite{li2024stochastic} and Xiao et al.~\cite{xiao2024developing} demonstrate the effectiveness of the ALM applied to problem~\cref{eq:model}, yielding competitive numerical results.
		Xiao et al.~\cite{xiao2024developing} establishes convergence to a KKT point. To derive complexity results to an $(\epsilon,\epsilon)$-KKT point  (see~\Cref{def:nearkkt}), Li et al.~\cite{li2024stochastic} assume additional smoothness on~$\vg$ and exploit this structure to efficiently solve each nonconvex ALM subproblem by an accelerated method.
		{Under a certain constraint qualification (CQ), Boob et al.~\cite{boob2023stochastic}  and Ma et al.~\cite{ma2020quadratically} respectively provide convergence results and complexity analysis for stochastic IPP methods to find an {$(\epsilon,\epsilon)$-KKT point}. These methods iteratively solve strongly convex subproblems by incorporating a proximal term into the objective and constraint functions. Notably, the complexity results in~\cite{boob2023stochastic} {represent} the state-of-the-art results  
			for problem~\cref{eq:model} under nonsmooth weakly convex settings.}
		
		{The aforementioned methods} utilize optimal subroutines for solving inner subproblems to achieve low-order complexity results. Implementing them needs extra efforts to design appropriate checkable stopping criteria for inner subsolvers or requires careful tuning of the inner iteration counts to reach a satisfactory empirical performance.  
		This disadvantage is especially pronounced in stochastic settings, where the randomness complicates the verification of a stopping condition and efficient tuning. 
		{Therefore, algorithms that avoid solving subproblems by an iterative subroutine and instead update the solution directly at each iteration are often more desirable for solving expectation-constrained problems in practice.}
		%Hence, single-loop methods for solving expectation-constrained problems are preferred in practice. 
		{However}, such methods are still limited. Both~\cite{alacaoglu2024complexity,shi2025momentum} design FOMs based on penalty or ALM for constrained stochastic programming but they require smoothness of the objective (possibly with a proximable nonsmooth convex term) and constraint functions.
		Huang and Lin~\cite{huang2023single} apply %a %single-loop 
		 SSG~\cite{Polyak1967ssg} to problem~\cref{eq:model}.	
		The SSG is  
		easier to implement while delivering competitive numerical performance.
		Under a (uniform) Slater-type~CQ  (see~\Cref{def:2}),  
		the SSG adapts its stepsize depending on whether the current iterate is near feasible or not, switching between fixed and adaptive steps based on Polyak’s rules~\cite{Polyak1967ssg}. 
		This adaptation is crucial in maintaining near feasibility to achieve a near KKT point.  
		However, the SSG requires frequent checks for near-feasibility at generated iterates, resulting in a higher CFC and overall computational complexity compared to the IPP method in~\cite{boob2023stochastic} when solving the stochastic nonconvex problem~\cref{eq:model}. Furthermore, in the stochastic setting, Huang et al.~\cite{huang2023single} establish convergence guarantees only under the assumptions that $\vg$ is convex and satisfies a light-tail condition.
		
		Without the need {for} feasibility checking, we propose a simple %novel %single-loop 
		\texttt{S}PIDER-type \texttt{S}tochastic \texttt{S}ubgradient FOM for solving the \texttt{E}xpectation \texttt{con}strained problem~\cref{eq:model}, named \verb|3S-Econ|. At the $k$-th iteration, \verb|3S-Econ| requires unbiased stochastic subgradient $\boldsymbol{\zeta}_f^{(k)}$ of $f$ and $\boldsymbol{\zeta}_{g_i}^{(k)}$ of $g_i$, evaluated at the iterate $\vx^{(k)}$, along with an approximation $u_i^{(k)}$ of  $g_i(\vx^{(k)})$, \text{for all $i\in[m]$}.  {Using} this information, it performs the following update step:
		{\begin{equation}
			\label{eq:newupdate}
			\vx^{(k+1)} =
			\mathrm{Proj}_{\mathcal{X}}\left(\vx^{(k)} -\alpha_k \left(\boldsymbol{\zeta}_f^{(k)} + \beta \sum_{i=1}^{m} \mathrm{Proj}_{[0,1]}\left(\textstyle \frac{u_i^{(k)}}{\nu}\right) \boldsymbol{\zeta}_{g_i}^{(k)} \right)\right),
		\end{equation}}where {$\beta>0$ is a %large 
		penalty parameter, $\nu>0$ is a %small 
		smoothing parameter}, and $\alpha_k>0$ denotes the step size at iteration $k$.	We will show that \verb|3S-Econ| is not only  easy to implement but also enjoys lower-order complexity than the best-known results to produce an $(\epsilon, \epsilon)$-KKT point of problem~\cref{eq:model}; %under milder conditions; 
		see~\cref{tab:1}.
		%	which is computationally efficient and .
		%	Here,  Moreover, we prove that our algorithm can find  %, demonstrating the convergence property of our method.
		%	In addition, our numerical experiments demonstrate the effectiveness and practicality of the proposed approach.

		\subsection{Applications}
		Many applications can be formulated into the nonconvex nonsmooth problem~\cref{eq:model}. We highlight two representative examples: Neyman-Pearson classification~\cite{rigollet2011neyman,yan2022adaptive} and fairness-constrained classification~\cite{lin2022complexity}.  
		\begin{example}
			\label{exam:1}
			\textbf{ Neyman-Pearson classification.} 
			{Consider a classification problem involving training data partitioned into  $M$ classes, denoted by \mbox{$\mathcal{D}_i \subseteq \mathbb{R}^d$} for $i\in[M]$.
			Let $\phi $ be a loss function, $\{\vx_i\}_{i\in[M]}$ denote model variables,
			$D>0$, and $\kappa$ be a prescribed threshold. 
			The multi-class  Neyman-Pearson classification aims to minimize the average pairwise loss for a specified class (e.g., class $\cD_1$),
			 while controlling the average pairwise losses for all other classes below a threshold. Formally, the optimization problem is given by
			\begin{equation} \vspace{-2mm}
				\label{eq:example1}
				\begin{aligned}
					&	\min _{\left\|\mathbf{x}_i\right\| \leq D, i\in[M]}   \frac{1}{\left|\mathcal{D}_1\right|} \sum_{l>1} \sum_{\va_1 \in \mathcal{D}_1} \phi\left(\mathbf{x}_1^{\top} \va_1-\mathbf{x}_l^{\top} \va_1\right), \\
					& \text { s.t. } \frac{1}{\left|\mathcal{D}_i\right|} \sum^M_{l=2, l \neq i} \sum_{\va_i \in \mathcal{D}_i} \phi\left(\mathbf{x}_i^{\top} \va_i-\mathbf{x}_l^{\top} \va_i\right) \leq \kappa, \quad i=2,3, \ldots, M.
				\end{aligned}
			\end{equation}
			Clearly,~\cref{eq:example1} is a special instance of \cref{eq:model} with $f$ having a finite-sum structure,   and~$\mathbb{P}_{\vg}$ corresponds to a uniform distribution over sample sets associated with $\{\cD_i\}_{i=2,\ldots,M}$.}
			%{This example will satisfy~\cref{ass:problemsetup} provided that the loss function \(\phi\) is weakly convex, Lipschitz continuous.}
		\end{example}
		
		\begin{example}
			\label{exam:2}
			\textbf{Fairness-constrained classification}. This application aims to ensure that classification models treat different groups equitably, avoiding biased outcomes. Traditional models focus on maximizing accuracy, which can lead to unfair performance across different groups of data, such as males and females. Fairness constraints are applied to balance outcomes for all groups, using principles like demographic parity~\cite{agarwal2018reductions,li2024model} or equalized odds~\cite{hardt2016equality}. Achieving fairness requires trade-offs, as improving fairness can reduce overall accuracy. This research area is particularly important in fields like healthcare and criminal justice~\cite{chouldechova2017fair}. 
			
			Let us consider 
			a classification problem with receiver operating characteristic (ROC)-based fairness~\cite{vogel2021learning}. {Suppose the data are partitioned into three parts: a dataset \(\mathcal{D} = \left\{ (\mathbf{a}_i, b_i) \right\}_{i=1}^n\), and two labeled subgroups \(\mathcal{D}_{\mathrm{p}}=\left\{\left(\mathbf{a}^{\mathrm{p}}_i, b^{\mathrm{p}}_i\right)\right\}_{i=1}^{n_{\mathrm{p}}}\) and \mbox{\(\mathcal{D}_{\mathrm{u}}=\left\{\left(\mathbf{a}^{\mathrm{u}}_i, b^{\mathrm{u}}_i\right)\right\}_{i=1}^{n_{\mathrm{u}}}\)}, defined according to a binary protected attribute (e.g., race or gender). Here, group~$\mathcal{D}_{\mathrm{p}}$ corresponds to the minority subgroup with historically less favorable treatment, while group $\mathcal{D}_{\mathrm{u}}$ represents the majority subgroup.}
			The fairness-constrained problem can be formulated as  \vspace{-1mm}
			\begin{equation} \vspace{-1mm}
				\min_{\vx: \|\vx\|\leq D} \Phi_{\Theta}(\vx), \,\, \text{\st}\,\, \Phi(\vx)\leq \Phi^*+\kappa_1,
			\end{equation}
			where $\Phi_{\Theta}(\vx):=\max_{\theta \in\Theta} \left| \frac{1}{n_{\mathrm{p}}} \sum_{i=1}^{n_{\mathrm{p}}} \mathbb{I}\left(\mathbf{x}^{\top} \mathbf{a}_i^{\mathrm{p}}\geq \theta\right)-\frac{1}{n_{\mathrm{u}}} \sum_{i=1}^{n_{\mathrm{u}}} \mathbb{I}\left(\mathbf{x}^{\top} \mathbf{a}_i^{\mathrm{u}}\geq\theta\right)  \right|$ is a ROC-based fairness, $\Phi^*:= \min_{ \vx }\Phi(\vx)$ with $\Phi(\vx):= \frac{1}{n} \sum_{i=1}^n\left[1-b_i \cdot \vx^{\top} \va_i\right]_{+}$,  $\Theta$ is a finite threshold set, and $\kappa_1>0$ is a slackness parameter. Using the sigmoid function $\sigma(\cdot)$  
			as a  continuous relaxation of the indicator function, we reformulate the problem as
			\begin{equation}
				\label{eq:fair1}
				\min_{\vx: \|\vx\|\leq D} \Psi_{\Theta}(\vx), \,\, \text{\st}\,\, \Phi(\vx)\leq \Phi^*+\kappa_1,
			\end{equation}
			where 
			$ \Psi_{\Theta}(\vx) : = \max_{\theta \in\Theta} \left| \frac{1}{n_{\mathrm{p}}} \sum_{i=1}^{n_{\mathrm{p}}} \sigma\left(\mathbf{x}^{\top} \mathbf{a}_i^{\mathrm{p}}-\theta\right)-\frac{1}{n_{\mathrm{u}}} \sum_{i=1}^{n_{\mathrm{u}}} \sigma\left(\mathbf{x}^{\top} \mathbf{a}_i^{\mathrm{u}}-\theta\right)  \right|$.  
			Since the max operator respects the weak convexity,  
			\cref{eq:fair1} is also a specific instance of~\cref{eq:model}.  
			%{In \cref{eq:data}, we consider three specific examples, all of which satisfy~\cref{ass:problemsetup}.}
	\end{example}
	
	\subsection{Tools: exact penalization, smoothing, and variance reduction}\label{sec:motivation}
	A key tool in our approach is the exact penalization. %By  
	%penalizing constraint violations, 
    This tool
	transforms problem~\cref{eq:model} into {a problem without function constraints}. Notably, when the penalty parameter is sufficiently large, the stationary point of the penalized problem {can} align with the KKT point of the original problem.
	Building on the exact penalization, we then design a stochastic FOM to solve the resulting problem. 
	
	{Let the penalty function $h(\vz)=\ve\zz[\vz]_+$, {where \(\ve \in \RR^m\) is the all-ones vector and %\([\cdot]_+\) denotes the componentwise ReLU operator (i.e., 
	\([\vz]_+ := (\max\{z_1, 0\}, \ldots, \max\{z_m, 0\})^\top\) for any \(\vz \in \RR^m\).} Then an exact penalty reformulation~\cite{bertsekas1997nonlinear,nocedal2006numerical,di1989exact,wang2017penalty,yang2025single,zhang2022stochastic} of problem~\cref{eq:model}  is}
	\begin{equation}\label{eq:penaltymodel}
		\textstyle
		\min_{ \vx\in\mathcal{X}} \,\, F(\vx):=f(\vx) + \beta g_+(\vx), \text{ with }g_+(\vx) = h(\vg(\vx)) = \sum_{i=1}^m [g_i(\vx)]_+,
	\end{equation}
	where $\beta>0$ is the penalty parameter. %, and $g_+(\vx) :=\max\{\vg(\vx),0\}$. 
	The exact penalization results regarding global minimizers are established in~\cite{nocedal2006numerical}.
	{Note that \(\sum_{i=1}^{m}\mathrm{Sign}([g_i(\mathbf{x})]_+) \boldsymbol{\zeta}_{g_i} \) belongs to \(\partial g_+(\mathbf{x}) \), where \( \boldsymbol{\zeta}_{g_i} \in \partial g_i(\mathbf{x} )\),}
	 and $\mathrm{Sign}:\RR\mapsto \{-1,0,1\}$  refers to the sign operator.  
	Due to the stochastic nature of \( \vg \), it is impractical to compute the exact value of  %\( \mathrm{Sign}([g_i(\mathbf{x})]_+) \boldsymbol{\zeta}_{g_i} \) 
	\( \mathrm{Sign}([g_i(\mathbf{x})]_+) \) for each $i$. 
	Approximating this value is also challenging, because $\mathrm{Sign}$ is discontinuous, making it impossible for the discrepancies between \( \mathrm{Sign}([g_i(\mathbf{x})]_+) \) and \( \mathrm{Sign}([g_i(\mathbf{x} , \mathcal{B})]_+) \) to be consistently small enough. %which underscores the challenges of such approximations.
	Here, $\mathcal{B}$ is a sample set of $\xi_{\vg}$ over the distribution $\mathbb{P}_{\vg}$, and  we define
	\begin{equation}\label{eq:g-approx}
		g_i(\mathbf{x}, \mathcal{B})=\frac{1}{|\mathcal{B}|}\sum_{\xi_{\vg}\in \mathcal{B}} g_i(\vx, \xi_{\vg}), \forall \, i\in [m],\quad \vg(\vx,\cB) =\left(g_1(\mathbf{x}, \mathcal{B}), \ldots, g_m(\mathbf{x}, \mathcal{B})\right)^\top,
	\end{equation} 
	with $|\mathcal{B}|$ denoting the cardinality of $\mathcal{B}$.  
	Consequently, solving problem~\cref{eq:penaltymodel} directly by a stochastic method is challenging. %These difficulties extend to algorithm design and convergence analysis, complicating the establishment of theoretical guarantees.
	
	To resolve  this issue,  {we employ the Huber-type\footnote{{The smoothing technique is a powerful tool for handling nonsmooth problems, and we refer interested readers to~\cite{burke2013epi,burke2017epi,chen2012smoothing} for further details and illustrative examples. Specifically, the function \([\,\cdot\,]_+\) admits various smooth approximations, including the Softplus function~\cite{li2023softplus,li2025new}, the square-root function~\cite{chen1996class}, and the log-sum-exp function~\cite{chen2012smoothing,liu2025stochastic}. Our complexity results remain valid as long as the smoothing function \(h^\nu\) is Lipschitz continuous, its gradient has a Lipschitz constant that grows linearly with \(1/\nu\), and the composition \(h^\nu(\vg(\cdot))\) is weakly convex. The mentioned Huber-type, Softplus and log-sum-exp smoothing functions  satisfy all these conditions. We choose the Huber-type smoothing because its gradient takes the particularly simple form \(\mathrm{Proj}_{[0,1]}(\cdot/\nu)\), which facilitates a clean and efficient algorithmic implementation.}} smoothing~\cite{chen2016penalty,nedich2023huber,tatarenko2021smooth} function~$h^\nu$ given as follows,}%i.e., a smoothing function %of $h$ with a smoothing parameter $\nu>0$,}
    \vspace{-3mm}\begin{align*}\vspace{-3mm}
		h^\nu(\vz) & :=\sum_{i=1}^m H^\nu(z_i), \text{ with } H^\nu(z_i) = \max _{0\leq y\leq 1} \left[\langle y, z_i \rangle-\frac{\nu}{2} y^2\right] 
%		=\sum_{i=1}^m h_i^\nu(\vz), \text{ where }h_i^\nu(\vz) 
=
		\left\{
		\begin{aligned}
			\frac{z_i^2}{2\nu},\quad &\text{ if } 0\leq z_i\leq \nu,\\
			z_i -\frac{\nu}{2}, &\text{ if }  z_i>\nu,\\
			0 , \quad\,\,\,&\text{ otherwise}, 
		\end{aligned}\right.
	\end{align*}
	% Its gradient is given by
	% $\nabla h^\nu(z)  =\mathrm{Proj}_{[0,1]}\left(\frac{z}{\nu}\right).$
where $\nu>0$ is a smoothing parameter. Then instead of directly working on \cref{eq:penaltymodel}, we present a new penalty model of problem~\cref{eq:model}:
	\begin{equation}
		\label{eq:smoothed}
		\tag{PP}
		\min_{ \vx \in\mathcal{X} } \,\, F^{\nu}(\vx)=f(\vx) + \beta h^{\nu} (\vg(\vx)).
	\end{equation}
	Since $h^{\nu}$ is smooth and {$\nabla h^\nu(\vz)  = %\sum_{i=1}^{m}
	\mathrm{Proj}_{[0,1]^m}\left(\frac{\vz}{\nu}\right)$}, we can access a subgradient of $ h^{\nu} (\vg(\vx))$ via $\sum_{i=1}^{m}\mathrm{Proj}_{[0,1]}\left(\frac{g_i(\vx)}{\nu}\right) \boldsymbol{\zeta}_{g_i} $, where \( \boldsymbol{\zeta}_{g_i} \in\partial g_i(\mathbf{x} )\) for each $i\in [m]$. %{Fortunately}, 
    In addition, as $\mathrm{Proj}_{[0,1]}$ is Lipschitz continuous, the difference between $\mathrm{Proj}_{[0,1]}\left(\frac{g_i(\vx)}{\nu}\right) \boldsymbol{\zeta}_{g_i} $ and $\mathrm{Proj}_{[0,1]}\left(\frac{u_i}{\nu}\right) \boldsymbol{\zeta}_{g_i} $ %$\mathrm{Proj}_{[0,1]}\left(\frac{g\left(\vx, \mathcal{B} \right)}{\nu}\right) \boldsymbol{\zeta}_{\vg} $ 
    can be well controlled if $u_i$ is a good approximation of $g_i(\vx)$. %for all $i\in[m]$. %The update~\cref{eq:newupdate} is then designed to solve this problem.
	
	%With the smoothed approximation $h^\nu$, 
    %{To obtain a subgradient of $ h^{\nu} (\vg(\vx))$,}
    For each $i\in[m]$ and each~$k$, we apply a SPIDER-type (or SARAH)~\cite{fang2018spider, nguyen2017sarah} variance reduction technique to estimate $g_i(\vx^{(k)})$ to obtain $u_i^{(k)}$ and perform the update in~\cref{eq:newupdate}. The variance reduction tool, together with the smoothing tool, enables us to achieve better complexity results for our method than state-of-the-art results to produce {an \mbox{$(\epsilon,\epsilon)$-KKT}} point of \cref{eq:model}. 
	
	\subsection{Contributions}
	%The presented work 
	We address key challenges in solving nonconvex nonsmooth expectation-constrained optimization problems. Our contributions are threefold.
	
	First,
		to address the expectation constraint involved in~\cref{eq:model}, we introduce a  Huber-type  smoothing penalty function~$h^{\nu} \circ \vg$. Leveraging this function, we propose solving problem~\cref{eq:smoothed}.   
		For any given $\epsilon > 0$, we demonstrate that, under easily verifiable conditions,  
        any \mbox{near-$\epsilon$} stationary point of problem~\cref{eq:smoothed} (in expectation) is an $(O(\epsilon), O(\epsilon))$-KKT point of problem~\cref{eq:model} (in expectation).
		This property   enables us to pursue an $(\epsilon, \epsilon)$-KKT point of problem~\cref{eq:model} by finding a near-$O(\epsilon)$ stationary point of problem~\cref{eq:smoothed}. %in expectation. 
		Our framework is well-suited for many applications such as problem~\cref{eq:fair1},  
        which involves a nonconvex objective and convex expectation constraints satisfying the required assumptions. 
	
	Second, we exploit the structure of problem~\cref{eq:smoothed} and propose a  SPIDER-type stochastic FOM to solve it, with updates given in~\cref{eq:newupdate}. At each iteration, the SPIDER technique~\cite{fang2018spider} is employed to generate an approximation \( u_i^{(k)} \) of the constraint function value $g_i(\vx^{(k)})$ for each $i\in[m]$.  
	Using the tool of Moreau envelope, we demonstrate that the proposed method can, in expectation, find a near-$\epsilon$ stationary point of problem~\cref{eq:smoothed} within \(O(\epsilon^{-4})\) iterations. This result matches the known lower bound for such tasks~\cite{jordan2023deterministic}, %showcasing 
    {highlighting} the efficiency of our approach. Combining this with the theory of exact penalization, we further analyze the complexity of obtaining an \((\epsilon, \epsilon)\)-KKT point of problem~\cref{eq:model} in expectation.

	Based on the structure of \(\vg\),	 we categorize the complexity results of our method as follows: 
	\begin{enumerate}
	\item {A generic stochastic \(\vg\):} OGC is \(O(\epsilon^{-4})\), CGC is \(O(\epsilon^{-4})\), and CFC is \(O(\epsilon^{-6})\) .
	\item {A finite-sum structured\footnote{%In 
    {Here,} %the finite-sum setting, 
    we assume that \(\mathbb{P}_{\vg}\) is a uniform distribution over a finite set of size \(N\), where \(N < \epsilon^{-4}\).} \(\vg\):} %The complexities improve to 
	OGC is \(O(\epsilon^{-4})\), CGC is \(O(\epsilon^{-4})\),  and CFC is \(O(\sqrt{N}\epsilon^{-4})\).
	\end{enumerate}
    %These results are either superior to or on par with state-of-the-art results.
    {When $\vg$ is generic stochastic, our complexity result is lower than state-of-the-art results by a factor of $\epsilon^{-2}$ in either OGC or CFC; when $\vg$ enjoys a finite-sum structure, the highest order among our results of OGC, CGC, and CFC is \(O(\sqrt{N}\epsilon^{-4})\), representing an improvement over the previously best-known $O(\min\{N\epsilon^{-4},\epsilon^{-6}\})$ bound.}
    Detailed comparisons are provided in~\cref{tab:1}.  %{In this table,} the sharpness assumption and the gradient-away assumption that we make are weaker than the \mbox{(uniform) Slater-type CQ} in~\cite{huang2023single,ma2020quadratically}, %where the latter 
    %{which} is weaker than a strong feasibility condition (see Eqn.~\cref{eq:str-feas} below) in~\cite{boob2023stochastic}. 
	%These assertions will be verified in \Cref{appen:1}. 

	Third, our approach does not need to solve subproblems by an iterative subroutine and thus  
	eliminates the need for tuning inner iterations, distinguishing our method from  those in~\cite{boob2023stochastic,li2024stochastic,ma2020quadratically}. This simplification reduces computational overhead and makes the method more accessible for %practical 
    {real-world} applications. In numerical experiments, we test both the deterministic and stochastic versions of our method, for three problems across three datasets.
	Notably, the deterministic version is significantly faster than state-of-the-art methods, and the stochastic version achieves even more impressive speedups.  
	\begin{table}[h]
	\centering
	\scriptsize
	\begin{tabular}{ccccccc}
		\hline Methods &   RIS$^\P$ &  structure of $\vg$  & CQ on $\vg$ & 
		OGC & CGC 
		&  CFC 
		\\ 
		\hline
		%		ALM~\cite{li2024stochastic} & no & smooth, weakly convex & regularity condition%\footnote{1}  
		%		&  $O(\epsilon^{-5})$ & $O(\epsilon^{-5})$ & $O(\epsilon^{-5})$
		% \\
		% \hline
		\multirow{4}{*}{IPP~\cite{boob2023stochastic}}	& \multirow{4}{*}{{yes}} & stochastic convex&  MFCQ, and  &  {$O(\epsilon^{-6})$}  & \multicolumn{2}{|c}{$O(\epsilon^{-6})$}  %& %{$O(\epsilon^{-6})$}
		\\
		& & finite-sum$^\dagger$ convex  &   Slater's condition  & {$O(\epsilon^{-4})$} & \multicolumn{2}{|c}{$O(\min\{N\epsilon^{-4},\epsilon^{-6}\})$} %& $O(\min\{N\epsilon^{-4},\epsilon^{-6}\})$ 
		\\
		  \cline{3-7}
		&  & stochastic WC$^\P$& %Slater-type CQ$^*$~\cite{ma2020quadratically} 
MFCQ, and 		& {$O(\epsilon^{-6})$}  & \multicolumn{2}{|c}{$O(\epsilon^{-6})$}
		\\
		& & finite-sum$^\dagger$ WC  & {strong feasibility$^*$}  & {$O(\epsilon^{-4})$}  & \multicolumn{2}{|c}{$O(\min\{N\epsilon^{-4},\epsilon^{-6}\})$}\\
		\hline
		%\multirow{2}{*}{IPP(-SSG)~\cite{ma2020quadratically}}
		%		& \multirow{2}{*}{{no}} &stochastic convex& \multirow{2}{*}{Slater-type CQ} & \multirow{2}{*}{$O(\epsilon^{-4})$}  & {$\widetilde{O}(\epsilon^{-6})$} & {$\widetilde{O}(\epsilon^{-6})$}
		%        \\
		%        & & finite-sum$^\dagger$ weakly convex  &   &   & $\widetilde{O}(N\epsilon^{-4})$  & $\widetilde{O}(N\epsilon^{-4})$\\
		%        \hline
		\multirow{3}{*}{SSG~\cite{huang2023single}}
		& \multirow{3}{*}{{no}} &stochastic convex& \multirow{3}{*}{Slater-type CQ$^*$} & $O(\epsilon^{-4})$ & $O(\epsilon^{-4})$  & {$O(\epsilon^{-8})$}
		\\
		& & finite-sum$^\dagger$ convex  &   & $O(\epsilon^{-4})$ & $O(\epsilon^{-4})$ & $O(N\epsilon^{-4})$\\
		&  & WC&  &   \multicolumn{3}{c}{none$^\ddagger$}
		\\
		\hline
		\multirow{5}{*}{Ours}
		& \multirow{5}{*}{{no}} &  stochastic convex& \multirow{2}{*}{Slater-type CQ$^*$} %$^\ddagger$ 
		& \multirow{2}{*}{$O(\epsilon^{-4})$}  &  $O(\epsilon^{-4})$ & $O(\epsilon^{-6})$
		\\
		& & finite-sum$^\dagger$ convex  &  %$^\ddagger$  
		& & $O(\epsilon^{-4})$ & $O(\sqrt{N}\epsilon^{-4})$
            \\
            \cline{3-7}
		&  & stochastic WC&  {Slater-type CQ, with}  & \multirow{2}{*}{$O(\epsilon^{-4})$}  &  $O(\epsilon^{-4})$ & $O(\epsilon^{-6})$
		\\
		&  & finite-sum$^\dagger$ WC& $B_g > m\rho_g  D^2$$^*$  &    & $O(\epsilon^{-4})$ & $O(\sqrt{N}\epsilon^{-4})$
		\\ 
		\hline
	\end{tabular} 
	\caption{ Comparison {of} complexity results among our method and existing ones for obtaining an $(\epsilon,\epsilon)$-KKT point of problem~\cref{eq:model} with a stochastic {nonsmooth} weakly convex objective function $f$.
		$^\P${RIS is short for ``Require Inner Solver'', and WC is for ``weakly convex''.}
		$^*${Strong feasibility implies the Slater-type CQ with any $B_g>0$; see~\Cref{prop:regurela}(ii). 
         {The Slater-type CQ is directly assumed in \cite{huang2023single} when $\{g_i\}$ are weakly convex. {Under the setting where $g_i$ is convex for each $i\in [m]$, Slater's condition is equivalent to the \mbox{Slater-type CQ}; see~\Cref{prop:regurela}(i).}}
		$^\dagger$In the finite-sum setting, we assume that \(\mathbb{P}_{\vg}\) is a uniform distribution over a finite set of size \(N < \epsilon^{-4}\). 
		$^\ddagger${When~\(\{g_i\}_{i\in[m]}\) are weakly convex, Huang et al.~\cite{huang2023single} provide complexity results only for the case where both \(f\) and~\(\vg\) are deterministic, excluding the setting where \(f\) is stochastic.}} 
	}
	\label{tab:1} 
	\end{table}

    \vspace{-4mm}
	\subsection{Notations and definitions}
	    {We use $\dom(\cdot)$ to denote the domain of a function.}
	$[M]$ denotes $\{1,\ldots,M\}$ for a positive integer \(M\). 
	 {The $0$-$\infty$ indicator function of a set $\cX$ is denoted as $\iota_\cX$.}
	The \(l_2\)-norm is {written as} %denoted by 
    \(\|\cdot\|\).  {\({\mathrm{co}(\cdot)}\) and $\mathrm{relint}(\cdot)$  represent the closed convex hull and relative interior of a set, respectively.
	$\mathcal{N}_{\mathcal{X}}(\vx)$   refers to the normal cone  of a closed convex set \(\mathcal{X}\) at~$\vx$.}
	We use \(\mathrm{Proj}\), \(\mathrm{Prob}\), and \(\mathrm{dist}\) to denote operators of the projection, the probability, and the distance, respectively.
	%The directional derivative of a convex function \(\phi\) at \(\vx\) along a direction \(\vv\) is written as \(\phi^{\prime}(\vx ; \vv)\), and it is defined as $\phi^{\prime}(\vx ; \vv) := \lim_{s \downarrow 0} \frac{\phi(\vx + s\vv) - \phi(\vx)}{s}.$
%	By~\cite[Proposition 2.1.5(a)]{clarke1990optimization}, it holds that 
%	\begin{equation}\label{eq:low-bd-dd}
%	\phi^{\prime}(\vx ; \vv) \geq \vv^\top \vgamma, \text{ for any }\vgamma \in \partial \phi(\vx).
%	\end{equation}
	
	{
	\begin{definition}[subdifferential]\label{def: subdiff}
		%Given a convex function $f: \RR^n \rightarrow \RR$, 
		%Let $f$ be a $\rho$-weakly convex function for some $\rho\ge 0$. If $\rho=0$,  
	For a convex function $f$, its subdifferential at $\vx\in \dom(f)$ is defined as~\cite{clarke1990optimization}
		\begin{align*}
			\partial f(\vx) = \big\{\vxi: %\in \RR^n: 
			f(\vx') - f(\vx) \geq \langle \vxi, \vx' - \vx \rangle \ \text{ for all } \vx' \in \dom(f)\big\}.
		\end{align*}
		%otherwise,  
		The subdifferential of a $\rho$-weakly convex function $f$ at $\vx\in \dom(f)$ is given by
		$$\partial f(\vx) := \textstyle \partial \big(f(\cdot) + \frac{\rho}{2}\|\cdot\|^2\big)(\vx) - \rho \vx.$$
	\end{definition}}
	
	\begin{definition}[Moreau Envelope~\cite{drusvyatskiy2019efficiency}]\label{def:weak-cvx} 
	For a \(\rho_{\phi}\)-weakly convex function \(\phi\), its Moreau envelope and proximal mapping for any \(t \in (0, \frac{1}{\rho_{\phi}})\) {on $\mathcal{X}$}  are defined by
	\mbox{$
	\phi_t({\vx})=\min_{\vz \in\mathcal{X}}\left\{\phi({\vz})+\frac{1}{2 t}\|{\vz}-{\vx}\|^2\right\}
	$, $\operatorname{Prox}_{t \phi}({\vx}) =\argmin_{\vz\in\mathcal{X} }\{\phi({\vz})+\frac{1}{2 t}\|{\vz}-{\vx}\|^2\}.	
	$}
	\end{definition}

	We give the definition of stationarity as follows.  In deterministic cases, where randomness is not involved, the expectation operator \(\mathbb{E}\) should be removed from the definition below.
	 {\begin{definition}[stationarity]
	\label{def:nearkkt}
	Let $\epsilon>0$. Suppose that $\{\psi_i\}_{i=1}^m$ are all $\rho_\psi$-weakly convex and $\phi$ is $\rho_\phi$-weakly convex {on $\cX$}.
	\\[1mm]
	$\mathrm{(a)}$ $\vx^*$ is called a \textit{near-$\epsilon$ stationary point} of $\min_{\vx\in \cX}\phi(\vx)$  in expectation, if $\vx^*\in\cX$, and there exists $0<t<\frac{1}{\rho_{\phi}}$, such that 
	$
	\mathbb{E}[\|\nabla \phi_{t}(\vx^*)\|^2] %=\mathbb{E}[\| \frac{1}{t}(	\operatorname{Prox}_{t \phi}({\vx^*})-\vx^*) \|^2] 
	\leq~\epsilon^2.
	$
	%Here, we denote \(\rho_{\phi}\) as the weakly convex parameter of \(\phi\), and  \(\frac{1}{0} = +\infty\).
	\\[1mm]
	$\mathrm{(b)}$ $\vx^*$ is called an \textit{$\epsilon$-KKT point} of %problem 
	$\min_{\vx\in\mathcal{X}, \boldsymbol{\psi}(\vx)\leq 0} \phi(\vx)$ in expectation, if %it holds that 
	$\vx^*\in\mathcal{X}$ and there exists $\boldsymbol{\lb}_{\vx^*}\ge0$ such that  $\mathbb{E}\left[\psi_i(\vx^*)\right]\leq \epsilon,$  \mbox{$| ({\lb}_{\vx^*})_i {\psi}_i(\vx^*)|\leq \epsilon$} for all $i\in[m]$, and
	$
		\mathbb{E}\left[\dist(\vzero,\partial \phi(\vx^*) + \sum_{i=1}^{m} (\lb_{\vx^*})_i \partial \psi_i(\vx^*)  + \mathcal{N}_{\mathcal{X}}(\vx^*))\right] \leq \epsilon.
	$ 
	\\[1mm]
	$\mathrm{(c)}$ $\vx^*$ is called an \textit{$(\epsilon, \epsilon)$-KKT point} of %problem 
	$\min_{\vx\in\mathcal{X},\boldsymbol{\psi}(\vx)\leq 0} \phi(\vx)$ in expectation, if %it holds that 
	\mbox{$\vx^*\in\mathcal{X}$} and $\mathbb{E}[\|\vx^*-\widetilde{\vx}^*\|]\leq \epsilon$ with $\widetilde{\vx}^*$ being an $\epsilon$-KKT point of %problem 
	$\min_{\vx\in\mathcal{X}, \boldsymbol{\psi}(\vx)\leq 0} \phi(\vx)$ in expectation.  
	\end{definition}
}

	\subsection{Organization}
	
	The rest of this paper is organized as follows.~\Cref{sec:exact} introduces the exact penalty models. In~\Cref{sec:algorithm}, we propose a   SPIDER-type stochastic subgradient method and establish its iteration complexity results. Numerical results are presented in~\Cref{sec:numer}, showcasing the performance of our method. Finally,~\Cref{sec:conclu} concludes the paper.
	
			\section{Model analysis}	
		\label{sec:exact}
		In this section, we  
		establish the relationships among models~\cref{eq:model},~\cref{eq:penaltymodel} and~\cref{eq:smoothed}, which are presented in the following diagram for {the convenience of readers.} %the readers' convenience. 
        Details are given in the mentioned lemma and theorem.
		\begin{equation*}\label{eq:relation}
			%\footnotesize
			\small
			\boxed{\begin{aligned}
					&  \quad\quad \quad\quad\quad \,\,
					\text{${\epsilon}$-stationary point of~\cref{eq:penaltymodel}  } \\
					&\textbf{\cref{lem:2}: }\quad\quad  \quad\Downarrow\\
					& \quad\quad\quad\quad\quad\quad %\quad\cref{eq:model}: \,\:\,\,\,\, 
					\text{$\epsilon$-KKT point of~\cref{eq:model}}   \\
			\end{aligned}}\quad 
			\boxed{\begin{aligned}
					& \quad\quad\quad\text{near-$\epsilon$ stationary point of }~\cref{eq:smoothed} \,\:\quad  \\
					&\textbf{\Cref{thm:main}: }\quad\quad\quad\quad\,\Downarrow\\
					&\quad\quad\quad\text{$(O(\epsilon), O(\epsilon))$-KKT point of problem~\cref{eq:model}}
			\end{aligned}}
		\end{equation*} 
		%We begin by introducing an exact penalty model in~\Cref{sec:exact1}, incorporating the penalty function $h=[\cdot]_+$, $\beta g_+(\vx)$, to address the constraints $\vg(\vx) \leq 0$. It is important to note that approximating a stochastic subgradient of $g_+(\vx)$ presents significant challenges. Subsequently, in~\Cref{sec:exact2}, we introduce a smoothing function $h^{\nu}$ for the $[\cdot]_{+}$ operation, which facilitates the handling of these challenges. A new penalty term $\beta h^{\nu}(\vg(\vx))$ is then proposed therein. 

		\subsection{Regularity conditions}\label{sec:regurela}
		 {To derive the exact penalization result, we assume the following (uniform)  Slater-type CQ, {which holds for tested instances of Examples~\ref{exam:1} and \ref{exam:2} in \Cref{sec:numer}.} %as shown in \cref{def:2}.
			\begin{assumption}[(uniform) Slater-type CQ~\cite{huang2023single,ma2020quadratically}]
					\label{def:2}
					There exist \(B_g>~0 \), \(B >0\), and \(\overline{\rho} > \rho_g \ge 0\), such that for 
						any \(\vx{ \in\cX}\) satisfying $0<g_+(\vx)\leq B_g$ with $g_+$ defined in \eqref{eq:penaltymodel},  it holds
						\begin{equation}\label{eq:sl-type-cq}
						\max_{i\in[m]}  g_i(\mathbf{y}) + \frac{\overline{\rho}}{2}\|\mathbf{y} - \mathbf{x}\|^2 \leq-B,\text{ for some }\mathbf{y}\in\mathrm{relint}(\mathcal{X}).
						\end{equation}
						%for some $\mathbf{y}\in\mathrm{relint}(\mathcal{X})$.
				\end{assumption}
 
 		%\cref{ass:cq}(i), applicable exclusively to the convex setting, imposes a sharpness condition that relates the growth rate of the constraint function \(\vg\) to the distance from the feasible set.  In addition, \cref{ass:cq}(ii) ensures that the feasibility problem \(\min_{\mathbf{x}\in\mathcal{X}} g_+(\mathbf{x})\) has no stationary points within a narrow band close to the constraint boundary, specifically the set \(\{\mathbf{x}\in\mathcal{X}:0<g_+(\mathbf{x})<B_g\}\). This second assumption is more general, encompassing both convex and weakly convex settings, and suffices to derive exact penalization results in either scenario.

{		
		Below we give the relationships among the Slater-type CQ, Slater's condition and the strong feasibility condition assumed in~\cite{boob2023stochastic},  where the latter two conditions are stated as follows:
		\begin{align}
			\label{eq:sla}
			\textstyle
			&\text{Slater's condition: } \exists\,\vy_{\mathrm{feas}}\in\mathrm{relint}(\cX) \text{ such that } \max_{i\in[m]} g_i(\vy_{\mathrm{feas}})<0;
			\\\textstyle
			\label{eq:str-feas}
			&\text{strong feasibility: }\exists\, \vy_{\mathrm{feas}} \in \mathcal{X} \text{ such that } \max_{i\in[m]} g_i(\vy_{\mathrm{feas}}) \le -2 \rho_g D^2.
		\end{align}
		
		\begin{proposition}
			\label{prop:regurela}
			Under~\cref{ass:problemsetup}, the following statements hold:
			\begin{itemize}
				\item[$(\mathrm{i})$] If $\{g_i\}_{i\in[m]}$ are   convex, Slater's condition and the Slater-type CQ are equivalent.
				\item[$(\mathrm{ii})$] If the strong feasibility condition holds, so does the Slater-type CQ. %holds. %with any $B_g>0$.
			\end{itemize}
		\end{proposition}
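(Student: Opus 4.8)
The plan is to prove both parts by elementary constructions that exploit the boundedness of $\mathcal{X}$ in \Cref{ass:problemsetup}(a): since $\|\vy-\vx\|\le D$ for all $\vx,\vy\in\mathcal{X}$, the quadratic term in \cref{eq:sl-type-cq} is controlled uniformly by $\frac{\overline{\rho}}{2}\|\vy-\vx\|^2\le\frac{\overline{\rho}}{2}D^2$. The guiding idea is that one \emph{fixed} witness point---a Slater point in part (i), a strongly feasible point in part (ii)---can serve as the $\vy$ in \cref{eq:sl-type-cq} simultaneously for \emph{every} $\vx$ in the band $0<g_+(\vx)\le B_g$, as long as $\overline{\rho}$ is chosen just above $\rho_g$ so that the feasibility slack of that witness dominates $\frac{\overline{\rho}}{2}D^2$. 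In these directions $B_g$ is never used and may be taken arbitrary, consistent with the footnote to \Cref{tab:1}.

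For part (ii), in the weakly convex regime $\rho_g>0$ where this condition is intended, let $\vy_{\mathrm{feas}}\in\mathcal{X}$ satisfy $\max_{i\in[m]}g_i(\vy_{\mathrm{feas}})\le-2\rho_g D^2$. I would choose any $\overline{\rho}$ in the nonempty interval $(\rho_g,4\rho_g)$, so that $\frac{\overline{\rho}}{2}D^2<2\rho_g D^2$, and then for every $\vx\in\mathcal{X}$,
\[
\max_{i\in[m]}g_i(\vy_{\mathrm{feas}})+\tfrac{\overline{\rho}}{2}\|\vy_{\mathrm{feas}}-\vx\|^2\le -2\rho_g D^2+\tfrac{\overline{\rho}}{2}D^2=:-B<0,
\]
which is \cref{eq:sl-type-cq} with $\vy=\vy_{\mathrm{feas}}$ and any $B_g>0$. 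The one loose end is that \cref{eq:sl-type-cq} demands $\vy\in\mathrm{relint}(\mathcal{X})$, whereas strong feasibility only supplies $\vy_{\mathrm{feas}}\in\mathcal{X}$; I would repair this by replacing $\vy_{\mathrm{feas}}$ with $\vy_\delta=(1-\delta)\vy_{\mathrm{feas}}+\delta\vy_0$ for a fixed $\vy_0\in\mathrm{relint}(\mathcal{X})$ and small $\delta>0$, since convexity puts $\vy_\delta\in\mathrm{relint}(\mathcal{X})$ and the Lipschitz continuity of each $g_i$ from \Cref{ass:problemsetup}(c) perturbs the left-hand side by only $O(\delta)$, which the strict slack $B>0$ absorbs.

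For part (i), convexity forces $\rho_g=0$, so any $\overline{\rho}>0$ is admissible. For the implication Slater $\Rightarrow$ Slater-type CQ, I would take the Slater point $\vy_{\mathrm{feas}}\in\mathrm{relint}(\mathcal{X})$ with $c:=-\max_{i\in[m]}g_i(\vy_{\mathrm{feas}})>0$ and fix $\overline{\rho}\in(0,c/D^2)$; then $\max_{i\in[m]}g_i(\vy_{\mathrm{feas}})+\frac{\overline{\rho}}{2}\|\vy_{\mathrm{feas}}-\vx\|^2\le-c+\frac{\overline{\rho}}{2}D^2\le-\tfrac{c}{2}$ for all $\vx$, giving \cref{eq:sl-type-cq} with $B=c/2$ and no relint perturbation needed. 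For the converse Slater-type CQ $\Rightarrow$ Slater, the idea is to produce a single $\vx_1\in\mathcal{X}$ lying in the band $0<g_+(\vx_1)\le B_g$ and then read off the Slater point from the CQ: starting from a feasible $\vx_f\in\mathcal{X}$ with $g_+(\vx_f)=0$ (which exists by \Cref{ass:problemsetup}(a)) and an infeasible $\vx_0\in\mathcal{X}$ with $g_+(\vx_0)>0$, the continuous convex function $g_+$ attains every value in $(0,g_+(\vx_0)]$ along the segment $[\vx_f,\vx_0]$, so some $\vx_1$ falls in $(0,B_g]$; the witness $\vy\in\mathrm{relint}(\mathcal{X})$ furnished by \cref{eq:sl-type-cq} at $\vx_1$ then satisfies $\max_{i\in[m]}g_i(\vy)\le-B<0$, which is Slater's condition. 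If no infeasible $\vx_0$ exists, the constraint is redundant on $\mathcal{X}$ and the equivalence is degenerate.

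The main obstacle is not the arithmetic but reconciling the abstract CQ with the concrete conditions at two points. First, the $\mathrm{relint}$ requirement in \cref{eq:sl-type-cq}: I handle it by a convexity-plus-continuity perturbation that survives precisely because every bound carries strict slack. Second, and more delicate, is the converse of part (i), where I must certify that the hypothesis band $0<g_+\le B_g$ is nonempty; this is exactly where convexity and continuity of $g_+$ enter, through an intermediate-value argument, and where the redundant-constraint case must be isolated. A final routine check is that the admissible windows for $\overline{\rho}$---namely $(\rho_g,4\rho_g)$ in part (ii) and $(0,c/D^2)$ in part (i)---are nonempty, which relies on $\rho_g>0$ in the strongly feasible regime and on $\rho_g=0$ in the convex regime, respectively.
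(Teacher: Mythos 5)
Your proposal is correct, and for part (ii) and the forward direction of part (i) it is essentially the paper's argument: a single fixed witness point plugged into \cref{eq:sl-type-cq} for all $\vx$, with the quadratic term absorbed via the diameter bound $\|\vy-\vx\|\le D$, and the relint issue in (ii) repaired by a small move toward $\mathrm{relint}(\cX)$ controlled by the $l_g$-Lipschitz continuity from \Cref{ass:problemsetup}(c) (the paper does exactly this, choosing $\vy\in\mathrm{relint}(\cX)$ with $\|\vy-\vy_{\mathrm{feas}}\|\le \rho_g D^2/(2l_g)$ and $\overline{\rho}=2\rho_g$, $B=\rho_g D^2/2$; your $\overline{\rho}\in(\rho_g,4\rho_g)$ with a $\delta$-perturbation is the same idea with the slack budgeted differently). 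Your implicit reliance on $\rho_g>0$ in (ii) is shared by the paper, whose choice $B=\rho_g D^2/2$ likewise degenerates when $\rho_g=0$, so this is not a gap relative to the paper's own proof. The one place you genuinely diverge is the converse of (i): the paper disposes of it in one line, asserting that the Slater-type CQ ``by definition'' yields $\vy\in\mathrm{relint}(\cX)$ with $\max_{i\in[m]}g_i(\vy)\le -B<0$, which tacitly presumes the CQ is invoked non-vacuously, i.e., that some $\vx\in\cX$ lies in the band $0<g_+(\vx)\le B_g$. You make this explicit with an intermediate-value argument along a segment from a feasible point (guaranteed by \Cref{ass:problemsetup}(a)) to an infeasible one, and you correctly isolate the remaining case: if $g_+\equiv 0$ on $\cX$ (e.g., $g\equiv 0$), the CQ holds vacuously while Slater's condition can fail, so the stated equivalence is literally false there and can only be excluded, not proved. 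Your treatment thus buys a sharper diagnosis of the converse than the paper's one-liner, at the cost of having to name the degenerate exception that the paper silently glosses over; the paper's version buys brevity by reading the CQ as non-vacuous.
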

		
		\begin{proof}
		(i) %$(\Rightarrow)$ 
			Suppose Slater's condition holds. Let $
			\vy=\vy_{\mathrm{feas}},$
			$
			\overline{\rho}=\frac{-\max_{i\in[m]} g_i(\vy_{\mathrm{feas}})}{D^2},$ $
			B=\frac{-\max_{i\in[m]} g_i(\vy_{\mathrm{feas}})}{2}$, and $B_g$ be any positive constant. 
			Then \cref{eq:sl-type-cq} trivially holds with the chosen $\vy$, $\overline{\rho}$, $B$, and any $\vx\in \cX$. Thus Slater-type CQ holds %and any $B_g > 0$, 
			since $\rho_g=0<\overline{\rho}$.
			
			%$(\Leftarrow)$ 
			Conversely, if the Slater-type CQ holds, then by definition there exist $\vy\in\mathrm{relint}(\cX)$ and $B>0$ such that
			$
			\max_{i\in[m]} g_i(\vy) \le -B <0,
			$
			which immediately implies Slater's condition. Hence, we 
			%Combining the two directions 
			establish the equivalence.
			
			(ii) Because $\cX\subset \RR^d$ is nonempty convex and $\vy_{\mathrm{feas}} \in \mathcal{X}$, there must exist $\vy\in\mathrm{relint}(\cX)$ such that $\|\vy - \vy_{\mathrm{feas}}\|\leq \frac{\rho_g D^2}{2l_g}$. %{\color{red}Why there is such a $\vy$?} 
			By the $l_g$-Lipschitz continuity of each $g_i$ from \cref{ass:problemsetup}(c), it holds $g_i(\mathbf{y}) - g_i(\mathbf{y}_{\mathrm{feas}}) \le l_g \|\vy - \vy_{\mathrm{feas}}\| \le \rho_g D^2/2,\, \forall\, i\in [m]$. Thus \mbox{$\max_{i\in[m]}g_i(\mathbf{y}) \leq \max_{i\in[m]}g_i(\mathbf{y}_{\mathrm{feas}})  + \rho_g D^2/2$}.
			Now let $\overline{\rho} = 2\rho_g$. We have for any $\vx\in\cX$ that %we can then bound
			\begin{align*}
				\max_{i\in[m]} g_i(\mathbf{y}) + \frac{\overline{\rho}}{2}\|\mathbf{y} - \mathbf{x}\|^2 & \le \max_{i\in[m]} g_i(\mathbf{y}_{\mathrm{feas}}) + \frac{\rho_g D^2}{2} +  \rho_g\|\mathbf{y} - \mathbf{x}\|^2  \\ &\leq -2 \rho_g D^2+ \frac{\rho_g D^2}{2}+  \rho_g D^2=-\frac{\rho_g D^2}{2}.
			\end{align*}
			 %This means that 
			 Hence, the Slater-type CQ holds with $B=\frac{\rho_g D^2}{2}$, $\overline{\rho} = 2\rho_g$, and any $B_g>0$.			
		\end{proof}
		We highlight that the strong feasibility condition in \cref{eq:str-feas} ensures the \mbox{Slater-type CQ} with any $B_g>0$. Later in the proof of~\cref{thm:main}, %specifically with some 
		we assume \mbox{Slater-type CQ} with $B_g > m\rho_g  D^2$, which is still implied by the strong feasibility condition. %a bound utilized. 
		However, the converse does not hold in general. The following example illustrates that %satisfying 
		a   \mbox{Slater-type CQ} with $B_g > m\rho_g  D^2$ does not imply the strong feasibility condition.
		%Additionally, it is straightforward to see that the strong feasibility condition~\cref{eq:str-feas} implies the (uniform) Slater-type CQ as a special case. Meanwhile, condition~\cref{eq:str-feas} ensures that \(B_g > \rho_g D^2\), a bound that will be used in the analysis of~\cref{thm:main}.  %Consequently, condition~\cref{eq:str-feas} also implies both regularity conditions in \cref{ass:cq} for any sufficiently large \mbox{\(B_g > 0\).}
		%We now present an example demonstrating that the Slater-type CQ with \mbox{$B_g>\rho_g D^2$} does not imply the strong feasibility condition.
		\begin{example}
			Consider the nonconvex function  
			$
			g(x) = -6+8x+x^2 - x^3$ over the set $\mathcal{X} = [0,1].
			$ Since $g''(x) = 2 - 6x$, it holds $g''(x) \ge -4,\, \forall x \in \cX$.
			Then $g$ is $4$-weakly convex on $\cX$.
			Take $y=0.01\in \mathrm{relint}(\cX)$. It is straightforward to verify that 
			$g(y) + \frac{5}{2}|x-y|^2 \leq -3$ for any $x\in\mathcal{X}$. Thus,
			the Slater-type CQ holds with parameters  
			$
			B_g = 1000,   B = 3$,    and $\overline{\rho} = 5 > 4 = \rho_g.
			$
			Thus, we have \(B_g > \rho_g  m D^2\), where $m=1$, and \(D =  1\).
			However, the strong feasibility condition fails because  it holds that 
			$
			\min_{x \in [0,1]} g(x) = -6 \; > \; -8 = -2 \rho_g D^2.
			$ 
		\end{example}
%	{\color{red}Do we need this diagram, given Proposition 2.1?}	 
%		We summarize these results in the following diagram, with $\vg$ being weakly convex.
%\begin{equation*}
%	%\label{eq:cqrela}
%\boxed{
%\begin{array}{ccc}
%	%& \Rightarrow&\hspace{-40mm}\text{\footnotesize $B_g > \rho_g D^2$}  \\
%	\text{Strong feasibility}~\cite{boob2023stochastic} &  {\Rightarrow} & \text{(uniform) Slater-type CQ}~\cite{huang2023single,ma2020quadratically}  \text{ with $B_g > m\rho_g  D^2$}  \\
%	\rotatebox[origin=c]{-25}{$\Rightarrow$}\hspace{-40mm}  & & \hspace{-40mm}\Updownarrow\text{convexity}   \\
%	& & \hspace{-40mm}\text{Slater's condition}~\cite{huang2023single}  
%	\\
%\end{array}
%}
%\end{equation*}
% 	}
%
}	
		\subsection{Exact penalization with penalty function $[\cdot]_+$}
		\label{sec:exact1}
		In this subsection, we study the relationship between \cref{eq:model} and \cref{eq:penaltymodel}. 
        {Since each $g_i$  is $\rho_g$-weakly convex for $i\in[m]$, and $f$  is $\rho_f$-weakly convex,   $F $ in \cref{eq:penaltymodel} is $\rho$-weakly convex with $\rho := \rho_f + \beta m\rho_g$.}
		%Notice that $f$  is $\rho_f$-weakly convex and $g$  is $\rho_g$-weakly convex.
		%Since $[\vg(\vx)]_+=\max\{\vg(\vx) + \frac{\rho_g}{2}\|\vx\|^2, \frac{\rho_g}{2}\|\vx\|^2\}-\frac{\rho_g}{2}\|\vx\|^2$ is $\rho_g$-weakly convex, it follows that $F$ in~\cref{eq:penaltymodel} is $\rho$-weakly convex with $\rho := \rho_f + \beta \rho_g$. 
		 
		First, we present a key lemma as follows, whose proof is given in \cref{appen:1}.
		\begin{lemma}
			\label{lem:necessary}
			Under~\Cref{ass:problemsetup} and \cref{def:2}, 
			let $\theta = \sqrt{2B (\overline{\rho}-\rho_g)}$ and $\vx\in\mathcal{X}$ satisfy  $0< g_+(\vx) \leq B_g$. Given    $\mathcal{J}_1 \subset [m]$ such that $\mathcal{J}_1 \cap \{i : g_i(\vx) \geq 0 \} \neq \emptyset$,  let $\mathcal{J}_2 = [m] \setminus \mathcal{J}_1$ and choose $\{\lambda_i\}_{i \in \mathcal{J}_2} \subset \mathbb{R}_+$ satisfying $\lambda_i g_i(\vx) \geq 0$ for all $i \in \mathcal{J}_2$. Then, %it holds that:
				\[
				\min_{ \{\boldsymbol{\zeta}_{g_i} \in \partial [g_i(\vx)]_+ \}_{i \in \mathcal{J}_1}, \{\boldsymbol{\zeta}_{g_i} \in \partial g_i(\vx)\}_{i \in \mathcal{J}_2}, \vu \in \mathcal{N}_{\mathcal{X}}(\vx)} \left\| \sum_{i \in \mathcal{J}_1} \zeta_{g_i}  + \sum_{i \in \mathcal{J}_2 } \lambda_i \zeta_{g_i} + \vu\right\| \geq \theta.
				\] 
		\end{lemma}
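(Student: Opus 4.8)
The plan is to fix an arbitrary admissible triple $\big(\{\boldsymbol{\zeta}_{g_i}\}_{i\in\mathcal{J}_1},\{\boldsymbol{\zeta}_{g_i}\}_{i\in\mathcal{J}_2},\vu\big)$, set $\vs:=\sum_{i\in\mathcal{J}_1}\boldsymbol{\zeta}_{g_i}+\sum_{i\in\mathcal{J}_2}\lambda_i\boldsymbol{\zeta}_{g_i}+\vu$, and establish $\|\vs\|\ge\theta$ with a bound that does not depend on the particular subgradients chosen; taking the infimum then gives the claim. The mechanism is to test $\vs$ against the Slater direction. Since $0<g_+(\vx)\le B_g$, \cref{def:2} applies at $\vx$, so I can pick $\vy\in\mathrm{relint}(\mathcal{X})$ with $\max_{i\in[m]}g_i(\vy)+\frac{\overline{\rho}}{2}\|\vy-\vx\|^2\le-B$. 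Writing $r:=\|\vx-\vy\|$, this yields $-g_i(\vy)\ge B+\frac{\overline{\rho}}{2}r^2>0$ for every $i\in[m]$; in particular $\vy$ is strictly feasible while $\vx$ is not, so $\vx\ne\vy$ and $r\in(0,D]$.

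Next I would bound $\langle\vs,\vx-\vy\rangle$ from below group by group. The normal-cone term gives $\langle\vu,\vx-\vy\rangle\ge0$ because $\vu\in\mathcal{N}_{\mathcal{X}}(\vx)$ and $\vy\in\mathcal{X}$. For $i\in\mathcal{J}_2$, the $\rho_g$-weak convexity of $g_i$ gives $\langle\boldsymbol{\zeta}_{g_i},\vx-\vy\rangle\ge g_i(\vx)-g_i(\vy)-\frac{\rho_g}{2}r^2$; multiplying by $\lambda_i\ge0$ and using $\lambda_i g_i(\vx)\ge0$ together with $-g_i(\vy)\ge B+\frac{\overline{\rho}}{2}r^2$ yields $\lambda_i\langle\boldsymbol{\zeta}_{g_i},\vx-\vy\rangle\ge\lambda_i\big(B+\frac{\overline{\rho}-\rho_g}{2}r^2\big)\ge0$. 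For $i\in\mathcal{J}_1$ I would use the explicit form of $\partial[g_i(\vx)]_+$ for weakly convex $g_i$: it is $\{\vzero\}$ when $g_i(\vx)<0$, equals $\partial g_i(\vx)$ when $g_i(\vx)>0$, and equals $\{t\boldsymbol{\zeta}:t\in[0,1],\ \boldsymbol{\zeta}\in\partial g_i(\vx)\}$ when $g_i(\vx)=0$. In all three cases $\langle\boldsymbol{\zeta}_{g_i},\vx-\vy\rangle\ge0$: the first is trivial, while for $g_i(\vx)\ge0$ the weak-convexity inequality gives $\langle\boldsymbol{\zeta},\vx-\vy\rangle\ge g_i(\vx)-g_i(\vy)-\frac{\rho_g}{2}r^2\ge g_i(\vx)+B+\frac{\overline{\rho}-\rho_g}{2}r^2>0$, and the scalar $t\ge0$. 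Crucially, a strictly violated index $i_0\in\mathcal{J}_1$ contributes the full amount $g_{i_0}(\vx)+B+\frac{\overline{\rho}-\rho_g}{2}r^2\ge B+\frac{\overline{\rho}-\rho_g}{2}r^2$.

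Summing the three groups gives $\langle\vs,\vx-\vy\rangle\ge B+\frac{\overline{\rho}-\rho_g}{2}r^2$. Then Cauchy--Schwarz ($\langle\vs,\vx-\vy\rangle\le\|\vs\|\,r$) gives $\|\vs\|\ge\frac{B}{r}+\frac{\overline{\rho}-\rho_g}{2}r$, and since $\overline{\rho}>\rho_g$ the AM--GM inequality bounds the right-hand side below by $2\sqrt{\frac{B}{r}\cdot\frac{\overline{\rho}-\rho_g}{2}r}=\sqrt{2B(\overline{\rho}-\rho_g)}=\theta$, uniformly in $r>0$. As the triple was arbitrary, the minimum in the statement is at least $\theta$.

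I expect the main obstacle to be the $\mathcal{J}_1$ analysis. One first has to compute $\partial[g_i(\vx)]_+$ for a merely weakly convex $g_i$, which I would do via $\partial[g_i]_+(\vx)=\partial\big([g_i]_++\frac{\rho_g}{2}\|\cdot\|^2\big)(\vx)-\rho_g\vx$ and the observation that $[g_i]_++\frac{\rho_g}{2}\|\cdot\|^2=\max\{g_i+\frac{\rho_g}{2}\|\cdot\|^2,\ \frac{\rho_g}{2}\|\cdot\|^2\}$ is a maximum of two convex functions, so standard subdifferential calculus applies. The delicate point is the boundary case $g_{i_0}(\vx)=0$, where $\vzero\in\partial[g_{i_0}(\vx)]_+$ is admissible and contributes nothing; consequently the decisive $B$-sized contribution must be supplied by a \emph{strictly} violated index inside $\mathcal{J}_1$, which is precisely what $g_+(\vx)>0$ provides. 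Once the estimate $\langle\vs,\vx-\vy\rangle\ge B+\frac{\overline{\rho}-\rho_g}{2}r^2$ is secured, the Cauchy--Schwarz/AM--GM step is routine, and it is exactly the minimization of $\frac{B}{r}+\frac{\overline{\rho}-\rho_g}{2}r$ over $r>0$ that produces the stated constant $\theta=\sqrt{2B(\overline{\rho}-\rho_g)}$.
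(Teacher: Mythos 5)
Your overall route is the same as the paper's: test the candidate vector $\vs:=\sum_{i\in\cJ_1}\vzeta_{g_i}+\sum_{i\in\cJ_2}\lb_i\vzeta_{g_i}+\vu$ against the Slater point $\vy$ furnished by \cref{def:2}, apply the (weak) convexity inequalities group by group together with $\langle\vu,\vy-\vx\rangle\le 0$, and convert the resulting estimate into $\|\vs\|\ge\sqrt{2B(\overline{\rho}-\rho_g)}$. Your Cauchy--Schwarz-plus-AM--GM finish is just a rearrangement of the paper's Young's-inequality step and produces the identical constant $\theta$. In fact, your per-index treatment of $\partial[g_i(\vx)]_+$ via $[g_i]_++\frac{\rho_g}{2}\|\cdot\|^2=\max\{g_i+\frac{\rho_g}{2}\|\cdot\|^2,\ \frac{\rho_g}{2}\|\cdot\|^2\}$, with the scaled inequality $\langle t\vzeta,\vx-\vy\rangle\ge t\,(g_i(\vx)-g_i(\vy)-\frac{\rho_g}{2}r^2)$ for $t\in[0,1]$, is more careful at indices with $g_i(\vx)=0$ than the paper's aggregated strong-convexity inequality, which tacitly treats every $\vzeta_{g_i}\in\partial[g_i(\vx)]_+$ with $g_i(\vx)\ge 0$ as an element of $\partial g_i(\vx)$ --- true when $g_i(\vx)>0$, but not when $g_i(\vx)=0$ and the scaling factor is strictly less than one.

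The one genuine gap is your closing claim that the decisive $B$-sized contribution is ``precisely what $g_+(\vx)>0$ provides.'' It is not: $g_+(\vx)>0$ yields a strictly violated index somewhere in $[m]$, while the hypothesis on $\cJ_1$ only guarantees an index with $g_i(\vx)\ge 0$ \emph{in} $\cJ_1$; nothing forces a strictly violated index into $\cJ_1$. Concretely, take $m=2$, $g_1(\vx)>0$, $g_2(\vx)=0$, $\cJ_1=\{2\}$, and $\lb_1=0$ (admissible, since $\lb_1 g_1(\vx)=0$). Then your group-wise bounds only give $\langle\vs,\vx-\vy\rangle\ge 0$, and indeed $\vzeta_{g_2}=\vzero\in\partial[g_2(\vx)]_+$, $\vu=\vzero$ make $\vs=\vzero$, so no argument can deliver $\|\vs\|\ge\theta$ in this configuration: the lemma as literally stated (with ``$\ge 0$'') fails there, and the paper's own proof silently relies on the same strict-violation reading. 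In both places the paper invokes the lemma --- $\cJ_1=[m]$ with $g_+(\vx^*)>0$ in \cref{lem:2}, and $\cJ_1=\{i: g_i(\widehat\vx^*)>\nu\}$ in \cref{thm:main} --- the set $\cJ_1$ does contain an index with $g_i>0$, and under the corrected hypothesis $\cJ_1\cap\{i: g_i(\vx)>0\}\neq\emptyset$ your argument closes completely. So the fix is to state and use that strengthened hypothesis where you currently appeal to $g_+(\vx)>0$; with that change your proof is correct, matches the paper's, and is slightly more robust at the boundary indices.
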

		
		Now, we establish the relationship between the $\epsilon$-stationary point of problem~\cref{eq:penaltymodel} and  the {$\epsilon$-KKT point} of~\cref{eq:model}.
		\begin{lemma}
			\label{lem:2}
			Suppose~\Cref{ass:problemsetup} and~\Cref{def:2} hold.
			Given  $\epsilon>0$, let the penalty parameter $\beta>\frac{1}{\theta} (l_f +\epsilon)$ in \cref{eq:penaltymodel} with $\theta = \sqrt{2B (\overline{\rho}-\rho_g)}$.
			If $\vx^*\in\cX$ satisfying %$0<g_+(\vx^*)\leq B_g$ {\color{red}Should it be $g_+(\vx^*)\leq B_g$?}  
$g_+(\vx^*)\leq B_g$	 is an $\epsilon$-stationary point of problem~\cref{eq:penaltymodel},  then $\vx^*$ satisfies $g_i(\vx^*)\leq 0$ for all $i\in[m]$, and is an $\epsilon$-KKT point of problem~\cref{eq:model}.
		\end{lemma}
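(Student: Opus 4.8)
The plan is to read ``$\epsilon$-stationary point of~\cref{eq:penaltymodel}'' as the subgradient condition $\dist(\vzero,\partial F(\vx^*)+\cN_{\cX}(\vx^*))\le\epsilon$; that is, there exist $\boldsymbol{\zeta}_f\in\partial f(\vx^*)$, $\boldsymbol{\zeta}_i\in\partial[g_i(\vx^*)]_+$ for each $i\in[m]$, and $\vu\in\cN_{\cX}(\vx^*)$ with $\|\boldsymbol{\zeta}_f+\beta\sum_{i=1}^m\boldsymbol{\zeta}_i+\vu\|\le\epsilon$, using $\partial F=\partial f+\beta\sum_i\partial[g_i]_+$ from the structure of $F$ in~\cref{eq:penaltymodel}. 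The proof then splits into two parts: first establish feasibility ($g_i(\vx^*)\le0$ for all $i$), and then construct the KKT multipliers from the same subgradient inclusion.

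For feasibility I would argue by contradiction. Suppose $g_+(\vx^*)>0$; combined with the hypothesis $g_+(\vx^*)\le B_g$ this gives $0<g_+(\vx^*)\le B_g$, so~\cref{lem:necessary} is applicable. Set $\cJ_1=\{i:g_i(\vx^*)\ge0\}$ and $\cJ_2=[m]\setminus\cJ_1$. Because $g_+(\vx^*)>0$ forces some $g_i(\vx^*)>0$, the set $\cJ_1$ is nonempty and meets $\{i:g_i(\vx^*)\ge0\}$, matching the hypothesis of~\cref{lem:necessary}. For $i\in\cJ_2$ we have $g_i(\vx^*)<0$, hence $\partial[g_i(\vx^*)]_+=\{\vzero\}$ and $\boldsymbol{\zeta}_i=\vzero$; choosing $\lambda_i=0$ for $i\in\cJ_2$ satisfies the requirement $\lambda_i g_i(\vx^*)\ge0$. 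Dividing the stationarity inequality by $\beta$, using $\vu/\beta\in\cN_{\cX}(\vx^*)$ (the normal cone is a cone) and $\|\boldsymbol{\zeta}_f\|\le l_f$, yields $\|\sum_{i\in\cJ_1}\boldsymbol{\zeta}_i+\vu/\beta\|\le(l_f+\epsilon)/\beta$. On the other hand,~\cref{lem:necessary} lower-bounds the very same quantity by $\theta$. Hence $\theta\le(l_f+\epsilon)/\beta$, i.e.\ $\beta\le\frac{1}{\theta}(l_f+\epsilon)$, contradicting the assumed choice $\beta>\frac{1}{\theta}(l_f+\epsilon)$. Therefore $g_+(\vx^*)=0$, which gives $g_i(\vx^*)\le0$ for all $i\in[m]$.

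For the KKT conditions I would use the feasibility just proven to turn the penalty subgradient into multipliers. With $g_i(\vx^*)\le0$, only two cases occur. If $g_i(\vx^*)<0$ then $\boldsymbol{\zeta}_i=\vzero$ and I set $(\lambda_{\vx^*})_i=0$. If $g_i(\vx^*)=0$ then, by the chain rule for the composition of the convex nondecreasing Lipschitz map $[\cdot]_+$ with the weakly convex $g_i$, one has $\boldsymbol{\zeta}_i=s_i\boldsymbol{\zeta}_{g_i}$ for some $s_i\in[0,1]$ and $\boldsymbol{\zeta}_{g_i}\in\partial g_i(\vx^*)$, and I set $(\lambda_{\vx^*})_i=\beta s_i\ge0$. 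In both cases $\beta\boldsymbol{\zeta}_i=(\lambda_{\vx^*})_i\boldsymbol{\zeta}_{g_i}$ for a suitable $\boldsymbol{\zeta}_{g_i}\in\partial g_i(\vx^*)$, so the stationarity inequality rewrites as $\|\boldsymbol{\zeta}_f+\sum_{i=1}^m(\lambda_{\vx^*})_i\boldsymbol{\zeta}_{g_i}+\vu\|\le\epsilon$, which delivers $\dist(\vzero,\partial f(\vx^*)+\sum_{i=1}^m(\lambda_{\vx^*})_i\partial g_i(\vx^*)+\cN_{\cX}(\vx^*))\le\epsilon$. Feasibility gives $g_i(\vx^*)\le0\le\epsilon$, and complementary slackness holds because $(\lambda_{\vx^*})_ig_i(\vx^*)=0$ in every case (either $(\lambda_{\vx^*})_i=0$ or $g_i(\vx^*)=0$). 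Hence $\vx^*$ is an $\epsilon$-KKT point of~\cref{eq:model}.

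I expect the main obstacle to be the careful subdifferential bookkeeping at the kink indices where $g_i(\vx^*)=0$: there one must invoke the chain-rule identity $\partial[g_i(\vx^*)]_+=\{s\boldsymbol{\zeta}_{g_i}:s\in[0,1],\,\boldsymbol{\zeta}_{g_i}\in\partial g_i(\vx^*)\}$, justified for the weakly convex $g_i$ composed with the convex nondecreasing Lipschitz outer map, and then read off the scaling factor $s_i$ as the nonnegative multiplier up to the factor $\beta$. The other delicate point is organizing the index sets $\cJ_1,\cJ_2$ and the auxiliary multipliers $\lambda_i$ so that the hypotheses of~\cref{lem:necessary} are met exactly; once that is set up, the feasibility contradiction is immediate from the prescribed lower bound on $\beta$.
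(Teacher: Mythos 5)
Your proof is correct and takes essentially the same route as the paper: feasibility by contradiction through \cref{lem:necessary} (the paper simply invokes it with $\cJ_1=[m]$, whereas you take $\cJ_1=\{i:g_i(\vx^*)\ge 0\}$ and zero multipliers on $\cJ_2$ --- an immaterial difference, since the subgradients $\boldsymbol{\zeta}_i$ vanish on $\cJ_2$ anyway), followed by the identical multiplier construction at active indices, where your chain-rule identity $\partial[g_i(\vx^*)]_+=\{s\,\boldsymbol{\zeta}_{g_i}: s\in[0,1],\ \boldsymbol{\zeta}_{g_i}\in\partial g_i(\vx^*)\}$ coincides with the paper's $\mathrm{co}(\partial g_i(\vx^*)\cup\{\vzero\})$ from \cite[Proposition 2.3.12]{clarke1990optimization} because $\partial g_i(\vx^*)$ is convex. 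No gaps.
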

		\begin{proof}
			Since $\vx^*$ is an $\epsilon$-stationary point of problem~\cref{eq:penaltymodel}, 
			there exist %According to~\cref{eq:epsta}, 
			$\vgamma\in\partial f(\vx^*)$, $\{\vzeta_{g_i}\in\partial [g_i(\vx^*)]_+\}_{i\in[m]}$, {and $\vu\in\mathcal{N}_{\mathcal{X}}(\vx^*)$} such that
			\begin{equation}
				\label{eq:KKTe}\left\|\textstyle \beta\sum_{i=1}^{m} \vzeta_{g_i}+\vu+ \vgamma\right\|\le \epsilon.
			\end{equation} 
			
			We claim %$g_+(\vx^*)=0$, i.e., 
			$g_i(\vx^*)\leq 0$ for all $i\in[m]$. %If the claim is not true, we have 
			Otherwise, it holds  $0<g_+(\vx^*)\leq B_g$ and $[m] \cap \{i : g_i(\vx^*) \geq 0 \} \neq \emptyset$. % and then $\partial g_+(\vx^*)= \partial g(\vx^*)$.
			We then derive from \cref{eq:KKTe} and the triangle inequality that 
			\begin{align}
				\label{eq:vio2}
				\epsilon& \geq %{}\left\|\beta\sum_{i=1}^{m} \vzeta_{g_i} +\vu+ \vgamma\right\|  \geq 
				{\beta}\left\|\textstyle \sum_{i=1}^{m} \vzeta_{g_i}+\frac{\vu}{\beta}\right\| -\|\vgamma\| \geq  {\beta}\left\|\textstyle \sum_{i=1}^{m} \vzeta_{g_i}+\frac{\vu}{\beta}\right\| -l_f \geq \beta \theta -l_f,
			\end{align}
			where %the second inequality is by triangle inequality, 
			the second inequality holds by~\Cref{ass:problemsetup}(c), and the third one follows from~\Cref{lem:necessary} with $\cJ_1=[m]$. %by $g(\vx^*)\leq B_g$. 
			This contradicts the condition $\beta > \frac{1}{\theta} (l_f +\epsilon)$. Hence %$g_+(\vx^*)=0$. Since $g_+=\sum_{i=1}^{m}[g_i]_+$, we have 
			$g_i(\vx^*)\le 0$ for all $i\in[m]$.
 			
			Now we find a corresponding multiplier %$\vt\in \RR_+^m$ 
			as follows. For each $i\in [m]$, 
			if $g_i(\vx^*)=0$, we obtain  $\partial [g_i(\vx^*)]_+=  {\mathrm{co}} (\partial g_i(\vx^*)\cup \{\vzero \})$ by~\cite[Proposition 2.3.12]{clarke1990optimization} and the weak convexity of $[g_i]_+$. Thus, there exists $\lb_i\in[0,1]$ such that $  \vzeta_{g_i} \in  \lb_i \partial g_i(\vx^*)$. %in this case. 
			In this case, we set $t_i =\lb_i$. % for each $i$ such that $g_i(\vx^*)=0$.
			%{Otherwise, i.e., if} 
			If $g_i(\vx^*)<0$, we have $\partial [g_i(\vx^*)]_+= \{\vzero\}$ so $  \vzeta_{g_i} =\vzero$, and %$\vzeta_{g_i}= \vzero$, and then 
			 we set $t_i =0 $. %for each $i$ such that $g_i(\vx^*)<0$.
			For the constructed $\vt$, we have $t_i g_i(\vx^*) = 0, \forall\, i\in [m]$. In addition, $\beta\sum_{i=1}^{m} \vzeta_{g_i} \in \sum_{i=1}^{m}\beta t_i \partial g_i(\vx^*)$, and thus \cref{eq:KKTe} indicates $\dist\big(\vzero, \partial f(\vx^*) + \sum_{i=1}^{m}\beta t_i \partial g_i(\vx^*) + \cN_\cX(\vx^*)\big) \le \epsilon$. % notice that 
			Therefore $\vx^*$ is an $\epsilon$-KKT point of problem~\cref{eq:model} with the multiplier $\beta \vt$. %by using \cref{eq:KKTe} and  for the chosen $\vt$. 
			%It then yields that $\vx^*$ is an $\epsilon$-KKT point of problem~\cref{eq:model} with multiplier 0 by~\cref{eq:KKTe}. 
			%The proof is then completed.
		\end{proof}

		\subsection{Exact penalization with a ``smoothed'' penalty approach}
		\label{sec:exact2}
		In this subsection, we establish the relationship between models~\cref{eq:model} and~\cref{eq:smoothed}.
		We {begin by showing that }%first show that 
        the function~$F^{\nu}$ in \cref{eq:smoothed} is  weakly convex. 
		\begin{lemma}
			\label{lem:weaklyFnu}
			Suppose~\Cref{ass:problemsetup} holds. {For any} $\nu>0$, the function $F^{\nu}$ defined in~\cref{eq:smoothed} is $\rho$-weakly convex, where $\rho = \rho_f + \beta m\rho_g$. 
		\end{lemma}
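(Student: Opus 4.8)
The plan is to prove weak convexity summand-by-summand, exploiting the variational representation of the Huber smoothing already recorded in the statement, namely $H^{\nu}(z) = \max_{0\le y\le 1}\left[\,yz - \frac{\nu}{2}y^2\,\right]$. The key point is that for each fixed dual variable $y$ the dependence on the inner function $g_i$ becomes \emph{linear}, so the weak convexity of $g_i$ transfers cleanly, after which the outer maximum preserves convexity.

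First I would fix $i\in[m]$ and a value $y\in[0,1]$ and examine the map $\vx \mapsto y\,g_i(\vx) - \frac{\nu}{2}y^2$. Because $y\in[0,1]$ is nonnegative and $g_i + \frac{\rho_g}{2}\|\cdot\|^2$ is convex on $\cX$ by \cref{ass:problemsetup}(b), the function $y g_i + \frac{y\rho_g}{2}\|\cdot\|^2$ is convex (nonnegative scaling of a convex function), and adding the convex term $\frac{(1-y)\rho_g}{2}\|\cdot\|^2$ (valid since $1-y\ge 0$) shows that
\[
\vx \mapsto y\,g_i(\vx) - \tfrac{\nu}{2}y^2 + \tfrac{\rho_g}{2}\|\vx\|^2
\]
is convex on $\cX$; the constant $-\frac{\nu}{2}y^2$ does not affect convexity in $\vx$. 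In other words, $y g_i$ is $\rho_g$-weakly convex uniformly over $y\in[0,1]$.

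Next I would take the supremum over $y\in[0,1]$. Since $\frac{\rho_g}{2}\|\vx\|^2$ is independent of $y$, it can be moved inside the maximum, giving
\[
H^{\nu}(g_i(\vx)) + \tfrac{\rho_g}{2}\|\vx\|^2 = \max_{0\le y\le 1}\left[\,y\,g_i(\vx) - \tfrac{\nu}{2}y^2 + \tfrac{\rho_g}{2}\|\vx\|^2\,\right],
\]
which, being a pointwise supremum of convex functions of $\vx$, is convex on $\cX$. Hence each $H^{\nu}\circ g_i$ is $\rho_g$-weakly convex. Summing over $i\in[m]$ yields that $h^{\nu}\circ\vg = \sum_{i=1}^m H^{\nu}\circ g_i$ is $m\rho_g$-weakly convex, so $\beta\,h^{\nu}\circ\vg$ is $\beta m\rho_g$-weakly convex; adding $f$, which is $\rho_f$-weakly convex by assumption, gives that $F^{\nu}$ is $(\rho_f + \beta m\rho_g)$-weakly convex, as claimed.

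The only real obstacle is that a direct composition argument---trying to substitute $g_i = \big(g_i + \frac{\rho_g}{2}\|\cdot\|^2\big) - \frac{\rho_g}{2}\|\cdot\|^2$ into the nonlinear outer function $H^{\nu}$---does not work, because pushing the concave quadratic $-\frac{\rho_g}{2}\|\cdot\|^2$ through $H^{\nu}$ destroys convexity even though $H^{\nu}$ is convex and nondecreasing. The variational representation is precisely what removes this difficulty: it linearizes the inner dependence for each $y$, after which the elementary facts that nonnegative scaling and pointwise suprema preserve convexity finish the argument. One could alternatively invoke a general composition rule for a convex, nondecreasing, Lipschitz outer function with weakly convex inner functions, but the max-representation route is self-contained and pins down the constant $\rho = \rho_f + \beta m\rho_g$ directly.
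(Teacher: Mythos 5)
Your proof is correct and follows essentially the same route as the paper's: both exploit the variational representation $H^{\nu}(z)=\max_{0\le y\le 1}\left[yz-\frac{\nu}{2}y^2\right]$, observe that $y\,g_i(\vx)-\frac{\nu}{2}y^2+\frac{\rho_g}{2}\|\vx\|^2$ is convex in $\vx$ for every fixed $y\in[0,1]$, and conclude convexity of $h^{\nu}(\vg(\cdot))+\frac{m\rho_g}{2}\|\cdot\|^2$ by taking the maximum (the paper invokes Danskin's theorem where you use the elementary fact that a pointwise supremum of convex functions is convex, and you spell out the $y$-splitting $y\bigl(g_i+\frac{\rho_g}{2}\|\cdot\|^2\bigr)+\frac{(1-y)\rho_g}{2}\|\cdot\|^2$ slightly more explicitly). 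The final bookkeeping step, adding the $\rho_f$-weakly convex $f$ to the $\beta m\rho_g$-weakly convex penalty term to obtain $\rho=\rho_f+\beta m\rho_g$, matches the paper as well.
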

		\begin{proof}
			To obtain the desired result, we only need to prove $ h^{\nu} (\vg(\vx))$ is $m\rho_g$-weakly convex. Notice that 
			\begin{align*}
				h^{\nu} (\vg(\vx)) + \frac{m\rho_g}{2}\|\vx\|^2= \sum_{i=1}^{m}\max _{0\leq y\leq 1} \left[y \cdot g_i(\vx) -\frac{\nu}{2} y^2 +\frac{\rho_g}{2}\|\vx\|^2\right].
			\end{align*}
			Since $g_i$ is $\rho_g$-weakly convex for each $i\in[m]$, $y \cdot g_i(\vx) -\frac{\nu}{2} y^2 +\frac{\rho_g}{2}\|\vx\|^2$ is convex with respect to $\vx$ for all $0\leq y\leq 1$. Thus  {by Danskin's theorem~\cite{bertsekas1997nonlinear},}
			$h^{\nu} (\vg(\vx)) + \frac{m\rho_g}{2}\|\vx\|^2$ is convex.  
		\end{proof}
		
		Next,  we introduce some notations and key properties of the Moreau envelope.
		Denote $F_{\frac{1}{2\rho}}$ and~$F^{\nu}_{\frac{1}{2\rho}}$ as the Moreau envelope of $F $ and $F^{\nu} $ with parameter $\frac{1}{2\rho}$ {on $\mathcal{X}$,} respectively, where $\rho = \rho_f + \beta m\rho_g$.  We let
		\begin{equation}\label{eq:def-x-hat-+}
			\begin{aligned}
				&\overline\vx^*=  \argmin_{\vx\in\mathcal{X}}  F(\vx) + \rho \|\vx -\vx^*\|^2,\quad %=\operatorname{Prox}_{\frac{F}{2\rho}}\left(\vx^*\right), 
				 \widehat{\vx}^* =\argmin_{\vx\in\mathcal{X}}   F^{\nu}(\vx) + \rho \|\vx -\vx^*\|^2.
				%=\operatorname{Prox}_{\frac{F^{\nu}}{2\rho}}\left(\vx^*\right).\\ 
			\end{aligned}
		\end{equation}	
	 Then we have 
		\begin{align}
			\label{eq:gradientmore}
			&\nabla F_{\frac{1}{2\rho }}(\vx^*) = 2\rho(\vx^*-\overline\vx^*), \ \nabla F^{\nu}_{\frac{1}{2\rho }}(\vx^*) = 2\rho(\vx^*- \widehat{\vx}^*), \text{ and}\\
			&\vzero \in \partial f\left(\widehat\vx^*\right) +\beta \sum_{i=1}^{m} \mathrm{Proj}_{[0,1]}\left(\frac{g_i\left(\widehat\vx^*\right)}{\nu}\right) \partial g_i\left(\widehat\vx^*\right) +2\rho \left(\widehat\vx^*-\vx^*\right) + \mathcal{N}_{\mathcal{X}}(\widehat\vx^*).
			\label{eq:kktwidex2}
		\end{align}
		In addition, by  
		\cite[Eqn. (4.3)]{drusvyatskiy2019efficiency}, it holds that  
		\begin{equation}
			\label{eq:importantine}
			%\begin{cases}
			F(\overline{\vx}^*)\leq F(\vx^*),
			\text{ and }
			\dist(\vzero , \partial F(\overline{\vx}^*)+\mathcal{N}_{\mathcal{X}}(\overline\vx^*))
			\leq \big\|\nabla F_{\frac{1}{2\rho }}(\vx^*)\big\| .
		\end{equation}
		
		We now present our main results  
		to demonstrate the effectiveness of solving a smoothed exact penalty problem for both deterministic and stochastic settings.
		\begin{theorem}
			\label{thm:main}
			Suppose that~\Cref{ass:problemsetup} holds and \cref{def:2} holds with \mbox{$B_g>m \rho_g D^2$}. Given $\epsilon>0$, let $\theta = \sqrt{2B (\overline{\rho}-\rho_g)}$, $\rho = \rho_f + \beta m\rho_g$,
			\begin{equation}
				\label{eq:epsilon}			
				\begin{aligned}
					&\beta >  \max\left\{\frac{1}{\theta}(l_f + \epsilon ), \frac{2(B_f +\rho_f D^2)}{B_g - m\rho_g  D^2}\right\}, 0 < \overline{\epsilon} \leq \epsilon \min\left\{ \frac{\beta \theta - l_f}{2 B_g}, \frac{\beta \theta -l_f  }{ m\beta l_g +\beta \theta -l_f}, 2 \rho \right\},\\ 
					&
					\text{ and }  0 < \nu \leq \min\left\{B_g, \frac{\epsilon}{\beta}, \frac{B_g-m\rho_g  D^2}{2m}, \frac{\epsilon}{2}\right\}.
				\end{aligned}
			\end{equation}
			If $\vx^*$ is a near-$\overline\epsilon$ stationary
			point of problem~\cref{eq:smoothed} with $t=\frac{1}{2\rho}=\frac{1}{2(\rho_f + \beta m \rho_g)}$ %{(see~\cref{def:nearkkt}$\mathrm{(a)}$)} 
			deterministically or in expectation, then $\|\vx^*-\widehat{\vx}^*\| \le \epsilon$ holds deterministically or in expectation, where $\widehat{\vx}^*$ is defined in~\cref{eq:def-x-hat-+}. In addition,   $\widehat{\vx}^*$   is an $\epsilon$-KKT point of problem~\cref{eq:model} deterministically or in expectation.   
		\end{theorem}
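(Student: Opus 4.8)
The plan is to deduce everything from two facts about the Moreau envelope of $F^{\nu}$ at $\vx^*$: the identity $\nabla F^{\nu}_{1/(2\rho)}(\vx^*)=2\rho(\vx^*-\widehat{\vx}^*)$ in \cref{eq:gradientmore}, and the first-order inclusion \cref{eq:kktwidex2} at the proximal point $\widehat{\vx}^*$. Near-$\overline\epsilon$ stationarity means $\|\nabla F^{\nu}_{1/(2\rho)}(\vx^*)\|\le\overline\epsilon$ deterministically, or $\mathbb{E}\big[\|\nabla F^{\nu}_{1/(2\rho)}(\vx^*)\|^2\big]\le\overline\epsilon^2$ in expectation. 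Hence $\|\vx^*-\widehat{\vx}^*\|=\|\nabla F^{\nu}_{1/(2\rho)}(\vx^*)\|/(2\rho)$, and the bound $\|\vx^*-\widehat{\vx}^*\|\le\epsilon$ follows at once in the deterministic case from $\overline\epsilon\le 2\rho\epsilon$ (the third quantity in the $\overline\epsilon$-bound of \cref{eq:epsilon}), and in the stochastic case after a Jensen step $\mathbb{E}\|\nabla F^{\nu}_{1/(2\rho)}(\vx^*)\|\le\overline\epsilon$. It then remains to certify that $\widehat{\vx}^*$ satisfies the three defining relations of an $\epsilon$-KKT point, using the nonnegative multiplier $\lambda_i:=\beta\,\mathrm{Proj}_{[0,1]}(g_i(\widehat{\vx}^*)/\nu)$ that \cref{eq:kktwidex2} supplies.

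Next I would show the violation at $\widehat{\vx}^*$ is small enough to invoke \cref{def:2} and \cref{lem:necessary}. Since $\widehat{\vx}^*$ minimizes $F^{\nu}(\cdot)+\rho\|\cdot-\vx^*\|^2$ over $\mathcal{X}$, comparing its value against a feasible $\vx_{\mathrm{feas}}$ (which exists by \cref{ass:problemsetup}(a), with $h^{\nu}(\vg(\vx_{\mathrm{feas}}))=0$) and using $|f(\vx_{\mathrm{feas}})-f(\widehat{\vx}^*)|\le B_f$, $\|\vx_{\mathrm{feas}}-\vx^*\|\le D$ and $\rho=\rho_f+\beta m\rho_g$ yields $\beta h^{\nu}(\vg(\widehat{\vx}^*))\le B_f+\rho_f D^2+\beta m\rho_g D^2$. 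Together with the elementary bound $H^{\nu}(z)\ge[z]_+-\nu/2$ this gives $g_+(\widehat{\vx}^*)\le(B_f+\rho_f D^2)/\beta+m\rho_g D^2+m\nu/2$, and the hypotheses $\beta>2(B_f+\rho_f D^2)/(B_g-m\rho_g D^2)$, $\nu\le(B_g-m\rho_g D^2)/(2m)$ and $B_g>m\rho_g D^2$ force $g_+(\widehat{\vx}^*)<B_g$. Crucially this holds for every realization, so it is available in the stochastic analysis as well.

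The core is a contradiction argument mirroring \cref{lem:2}. If $\max_i g_i(\widehat{\vx}^*)\ge\nu$ then $0<g_+(\widehat{\vx}^*)<B_g$, so splitting $[m]$ into $\mathcal{J}_1=\{i:g_i(\widehat{\vx}^*)\ge\nu\}\neq\emptyset$ (where $\lambda_i/\beta=1$ and $\partial[g_i]_+=\partial g_i$) and $\mathcal{J}_2$ (where $\lambda_i/\beta\in[0,1)$ and $(\lambda_i/\beta)g_i\ge0$), \cref{lem:necessary} bounds the norm of the penalty combination below by $\theta$. Inserting this into \cref{eq:kktwidex2} and using $\|\vgamma\|\le l_f$ produces the pointwise implication $\max_i g_i(\widehat{\vx}^*)\ge\nu\Rightarrow\|\nabla F^{\nu}_{1/(2\rho)}(\vx^*)\|\ge\beta\theta-l_f$. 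In the deterministic case $\|\nabla F^{\nu}_{1/(2\rho)}(\vx^*)\|\le\overline\epsilon\le\epsilon<\beta\theta-l_f$ (from $\beta>(l_f+\epsilon)/\theta$), so $\max_i g_i(\widehat{\vx}^*)<\nu\le\epsilon$ gives feasibility; complementary slackness holds because $\lambda_i g_i=\beta g_i^2/\nu<\beta\nu\le\epsilon$ when $0<g_i<\nu$ and vanishes otherwise; and stationarity is immediate from \cref{eq:kktwidex2}, as $\dist\big(\vzero,\partial f(\widehat{\vx}^*)+\sum_i\lambda_i\partial g_i(\widehat{\vx}^*)+\mathcal{N}_{\mathcal{X}}(\widehat{\vx}^*)\big)\le\|\nabla F^{\nu}_{1/(2\rho)}(\vx^*)\|\le\epsilon$.

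The main obstacle is the stochastic case, where the displayed implication only fails on the event $A=\{\|\nabla F^{\nu}_{1/(2\rho)}(\vx^*)\|\ge\beta\theta-l_f\}$, on which the violation can be as large as $B_g$. I would control $A$ by Markov's inequality, $\Prob(A)\le\mathbb{E}\|\nabla F^{\nu}_{1/(2\rho)}(\vx^*)\|/(\beta\theta-l_f)\le\overline\epsilon/(\beta\theta-l_f)$. Splitting $\mathbb{E}[g_i(\widehat{\vx}^*)]$ over $A$ and $A^c$ (with $g_i<\nu\le\epsilon/2$ on $A^c$ and $g_i\le g_+<B_g$ on $A$), the first quantity in the $\overline\epsilon$-bound, $\overline\epsilon\le\epsilon(\beta\theta-l_f)/(2B_g)$, makes $B_g\,\Prob(A)\le\epsilon/2$ and hence $\mathbb{E}[g_i(\widehat{\vx}^*)]\le\epsilon$; the stationarity estimate carries over verbatim to $\mathbb{E}[\dist(\cdots)]\le\overline\epsilon\le\epsilon$. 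The genuinely delicate piece is complementary slackness, which is clean only on $A^c$; the second quantity $\overline\epsilon\le\epsilon(\beta\theta-l_f)/(m\beta l_g+\beta\theta-l_f)$ is tailored to absorb the $A$-contribution, and I expect checking that the three $\overline\epsilon$-thresholds are simultaneously sufficient—alongside the elementary Huber estimates $\mathrm{Proj}_{[0,1]}(z/\nu)\,z\le H^{\nu}(z)+\nu/2$ and $H^{\nu}(z)\ge[z]_+-\nu/2$—to be the only fiddly computation.
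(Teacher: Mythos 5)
Your deterministic argument is complete and coincides with the paper's proof: the Moreau-envelope identity, the proximal comparison yielding $g_+(\widehat{\vx}^*)< B_g$ on every realization, and the contradiction via \cref{lem:necessary} are exactly the paper's steps, and your Markov bound $\mathrm{Prob}(A)\le\overline{\epsilon}/(\beta\theta-l_f)$ is precisely the paper's probability split. The gap is in the stochastic case, exactly where you flagged uncertainty: keeping the multiplier $\lambda_i=\beta\,\mathrm{Proj}_{[0,1]}\left(g_i(\widehat{\vx}^*)/\nu\right)$ on \emph{all} indices breaks complementary slackness on the event $A$, and none of the three $\overline{\epsilon}$-thresholds in \cref{eq:epsilon} can absorb it. On $A$, an index with $g_i(\widehat{\vx}^*)\ge\nu$ carries $\lambda_i=\beta$ while $g_i(\widehat{\vx}^*)$ can be as large as $B_g$, so $|\lambda_i g_i(\widehat{\vx}^*)|$ can be of order $\beta B_g$; since \cref{def:nearkkt}(b) states the complementary-slackness condition $|(\boldsymbol{\lb}_{\vx^*})_i\psi_i(\vx^*)|\le\epsilon$ \emph{without} an expectation, it must hold on every realization, and it simply does not. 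Even if you relax it to expectation, Markov gives only $\mathbb{E}\left[|\lambda_i g_i(\widehat{\vx}^*)|\mathbf{1}_A\right]\le \beta B_g\,\overline{\epsilon}/(\beta\theta-l_f)$, and the second threshold $\overline{\epsilon}\le\epsilon(\beta\theta-l_f)/(m\beta l_g+\beta\theta-l_f)$ yields the bound $\beta B_g\,\epsilon/(m\beta l_g+\beta\theta-l_f)$, which is not $\le\epsilon$ unless $B_g$ is comparable to $m l_g+\theta$; but $B_g$ is a free parameter of the Slater-type CQ (the paper's own example takes $B_g=1000$), so that threshold is not tailored to this term, contrary to your expectation.

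The paper resolves this with a different multiplier, and that is the one missing idea: set $(\boldsymbol{\lb}_{\widehat{\vx}^*})_i=0$ on $\mathcal{J}_1=\{i: g_i(\widehat{\vx}^*)>\nu\}$ and keep $(\boldsymbol{\lb}_{\widehat{\vx}^*})_i=\beta\,\mathrm{Proj}_{[0,1]}\left(g_i(\widehat{\vx}^*)/\nu\right)$ only on $\mathcal{J}_2$. Then $|(\boldsymbol{\lb}_{\widehat{\vx}^*})_i g_i(\widehat{\vx}^*)|\le\beta\nu\le\epsilon$ holds pointwise on every realization, including on $A$ (this is \cref{eq:cs-eps}). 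The price is paid in stationarity rather than complementary slackness: rewriting \cref{eq:kktwidex2} as \cref{eq:kktvio22}, the residual measured with the truncated multipliers acquires the extra term $\beta\sum_{i\in\mathcal{J}_1}\vzeta_{g_i}$, of norm at most $m\beta l_g$, on the event $\mathcal{J}_1\neq\emptyset$, and it is precisely this term that the second threshold absorbs:
\begin{equation*}
\mathbb{E}\left[\dist\left(\vzero,\partial f(\widehat{\vx}^*)+\sum_{i=1}^{m}(\boldsymbol{\lb}_{\widehat{\vx}^*})_i\partial g_i(\widehat{\vx}^*)+\mathcal{N}_{\mathcal{X}}(\widehat{\vx}^*)\right)\right]\le \overline{\epsilon}+\frac{\overline{\epsilon}\, m\beta l_g}{\beta\theta-l_f}\le\epsilon,
\end{equation*}
as in \cref{eq:kkttotal2}. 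In short, your clean stationarity estimate ($\mathbb{E}[\dist(\cdots)]\le\overline{\epsilon}$ verbatim) and valid complementary slackness cannot be had with the same multiplier; the truncation trades a bounded, low-probability stationarity error for pointwise complementary slackness, after which your remaining steps (feasibility via the first threshold, $\|\vx^*-\widehat{\vx}^*\|\le\epsilon$ via $\overline{\epsilon}\le2\rho\epsilon$) go through unchanged.
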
	
		
		\begin{proof}
			We only need to   prove the theorem in the expectation case, {when $\vx^*$ is a random vector.} 
			For the deterministic case, one can simply drop the expectation notation $\EE$.
			
			First, %{according to the definition of $\vx^*$,}%since $\vx^*$ is a near-$\overline{\epsilon}$ stationary point of problem~\cref{eq:smoothed} with $t=\frac{1}{2\rho}$ in expectation, it follows from
            from \Cref{def:nearkkt} and \cref{eq:gradientmore}, it follows $
			\mathbb{E}\left[\|2 \rho (\widehat{\vx}^* - \vx^*)\|^2\right] \leq~\overline{\epsilon}^2
			$ by the near-$\overline\epsilon$ stationarity of $\vx^*$ {in expectation}. 
            {Because $\overline\epsilon \le 2\rho \epsilon$, it follows that}
			%This together with $\overline\epsilon \le 2\rho \epsilon$ indicates	
			\begin{equation}\label{eq:eps-dist-xbar-x}
				\mathbb{E}\left[\|2 \rho (\widehat{\vx}^* - \vx^*)\|\right] \leq \overline{\epsilon} \text{ and  } \mathbb{E}\left[\|\widehat{\vx}^* - \vx^*\|\right] \leq \epsilon. 
			\end{equation}	
			
			Second, we define two index sets: $\mathcal{J}_2=\{i: g_i(\widehat\vx^*)/\nu\leq 1\}$ and $\mathcal{J}_1=[m]\setminus \mathcal{J}_2$. Construct the vector $\boldsymbol{\lambda}_{\widehat\vx^*}$ by setting $(\boldsymbol{\lambda}_{\widehat\vx^*})_i = \beta \mathrm{Proj}_{[0,1]}(g_i(\widehat\vx^*)/\nu) \ge 0$ for $i\in \mathcal{J}_2$ and $(\boldsymbol{\lambda}_{\widehat\vx^*})_i=0$ for $i\in \mathcal{J}_1$. We next bound $|(\boldsymbol{\lambda}_{\widehat\vx^*})_i g_i(\widehat\vx^*)|$ for each $i$. 
			For $i\in\mathcal{J}_1$, we directly have $(\boldsymbol{\lambda}_{\widehat\vx^*})_i g_i(\widehat\vx^*)=0$. For $i\in\mathcal{J}_2$, if $g_i(\widehat\vx^*)\leq 0$, then $(\boldsymbol{\lambda}_{\widehat\vx^*})_i=0$ and $(\boldsymbol{\lambda}_{\widehat\vx^*})_i g_i(\widehat\vx^*)=0$; if $g_i(\widehat\vx^*)>0$, we have from $g_i(\widehat\vx^*)/\nu\leq 1$ that $g_i(\widehat\vx^*) \leq \nu \leq \epsilon/2$, and thus %. If additionally $g_i(\widehat\vx^*)\leq 0$, then by definition $(\boldsymbol{\lambda}_{\widehat\vx^*})_i=0$, hence $|(\boldsymbol{\lambda}_{\widehat\vx^*})_i g_i(\widehat\vx^*)|=0 $. Otherwise, if $g_i(\widehat\vx^*)>0$, we have 
			$|(\boldsymbol{\lambda}_{\widehat\vx^*})_i g_i(\widehat\vx^*)|=(\boldsymbol{\lambda}_{\widehat\vx^*})_i g_i(\widehat\vx^*)\leq \beta g_i(\widehat\vx^*)\leq \beta \nu\leq \epsilon.$ Hence, %the inequality 
			\begin{equation}\label{eq:cs-eps}
			|(\boldsymbol{\lambda}_{\widehat\vx^*})_i g_i(\widehat\vx^*)|\leq \epsilon, \forall\, i\in[m].
			\end{equation}

			Third, we show $g_+(\widehat{\vx}^*) \le B_g$. By \cref{eq:kktwidex2} and the definition of $\cJ_1$ and $\cJ_2$, we have
			\begin{align}
				\label{eq:kktvio22}
				\vzero \in \partial f\left(\widehat\vx^*\right) +\beta  \partial \sum_{i\in\mathcal{J}_1} [g_i\left(\widehat\vx^*\right)]_+ +   \sum_{i\in\mathcal{J}_2} (\boldsymbol{\lb}_{\widehat\vx^*})_i\partial g_i\left(\widehat\vx^*\right)  +\mathcal{N}_{\mathcal{X}}(\widehat\vx^*)+2\rho \left(\widehat\vx^*-\vx^*\right).
			\end{align}
			By \cref{ass:problemsetup}, there exists $\mathbf{x}_{\mathrm{feas}}\in\cX\cap\{\vx:\vg(\vx)\leq 0\}$.
			Hence, from~\cref{eq:def-x-hat-+},  it holds
			\begin{equation}
				\begin{aligned}
					& f\left(\widehat{\mathbf{x}}^*\right)+\beta h^{\nu}\left(\vg\left(\widehat{\mathbf{x}}^*\right)\right)+\rho\left\|\widehat{\mathbf{x}}^*-\mathbf{x}^*\right\|^2 \\ \leq& f\left(\mathbf{x}_{\mathrm{feas}}\right)+\beta h^{\nu} \left(\vg\left(\mathbf{x}_{\mathrm{feas}}\right)\right)+\rho\left\|\mathbf{x}_{\mathrm{feas}}-\mathbf{x}^*\right\|^2 
					  =f\left(\mathbf{x}_{\mathrm{feas}}\right)+\rho\left\|\mathbf{x}_{\mathrm{feas}}-\mathbf{x}^*\right\|^2 .
				\end{aligned}
			\end{equation}
			 Thus,  
			$
			f(\widehat{\vx}^*) + \beta h^{\nu}(\vg(\widehat{\vx}^*) )+ \rho \|\widehat{\vx}^* - \vx^*\|^2\le f(\vx_{\mathrm{feas}}) + \rho D^2$ by \Cref{ass:problemsetup}(a).
			In addition, we have  $\beta \geq \frac{B_f + \rho_f D^2}{B_g - m\nu -  m\rho_g  D^2}$ by  $\beta>\frac{2(B_f +\rho_f D^2)}{B_g - m\rho_g  D^2}$ and $\nu\leq \frac{B_g-m\rho_g  D^2}{2m}$.
			Now further by $m\rho_g  D^2 < B_g$, % $\beta \geq \frac{B_f + \rho_f D^2}{B_g - m\nu -  m\rho_g  D^2}$, 
			  $\rho = \rho_f + \beta m\rho_g$, and \cref{eq:bd-f-diff},
			it follows that
			\begin{align}
			\label{eq:hnu}
	 	\textstyle	h^{\nu}(\vg(\widehat{\vx}^*)  )
			\leq 
			\frac{1}{\beta} \Bigl(f(\vx_{\mathrm{feas}}) - f(\widehat{\vx}^*) + \rho D^2\Bigr)\leq \frac{B_f + \rho D^2}{\beta} \leq B_g - m\nu.
			\end{align}
			This implies $  g_+(\widehat{\vx}^*) \le h^{\nu}(\vg(\widehat{\vx}^*)  ) + m\nu \le B_g$ by the definition of $h^{\nu}$.
			
			We now discuss two events:  $\cA_1$ as the event of $\mathcal{J}_1=\emptyset$ and $\cA_2$ as the event of $\mathcal{J}_1\neq\emptyset$. Let $\mathrm{Prob}_1$ and $\mathrm{Prob}_2$ denote their probabilities.  
			%For event 
			When $\cA_1$ happens, we have  {$\dist\left(\vzero, \partial f\left(\widehat\vx^*\right) +\sum_{i=1}^{m}(\boldsymbol{\lb}_{\widehat\vx^*})_i\partial g_i\left(\widehat\vx^*\right)+\mathcal{N}_{\mathcal{X}}(\widehat\vx^*)\right) \leq 2\rho \| \widehat\vx^*-\vx^*\|$} by~\cref{eq:kktvio22}. %and triangle inequality.
			Hence, %we obtain
			\begin{equation}
				\label{eq:event12}
				\begin{aligned}
					&\dist\left(\vzero, \partial f\left(\widehat\vx^*\right) +\sum_{i=1}^{m}(\boldsymbol{\lb}_{\widehat\vx^*})_i\partial g_i\left(\widehat\vx^*\right)+\mathcal{N}_{\mathcal{X}}(\widehat\vx^*)\right) \leq  2\rho \| \widehat\vx^*-\vx^*\|, \\ &g_i(\widehat\vx^*) %\le \nu 
					\leq \frac{\epsilon}{2},\text{ and }|(\boldsymbol{\lb}_{\widehat\vx^*})_i g_i\left(\widehat\vx^*\right)| \leq \epsilon, \forall\,i\in[m].
				\end{aligned}
			\end{equation}			
			%For event 
			When $\cA_2$ happens, we have $\mathcal{J}_1 \cap \{i : g_i(\widehat\vx^*) \geq 0 \} \neq \emptyset$,  and $(\lambda_{\widehat\vx^*})_i g_i(\widehat\vx^*) \geq 0$ for all $i \in \mathcal{J}_2$.
			Using \cref{lem:necessary} and \Cref{ass:problemsetup}(c), we obtain that   
			\begin{equation}
				\label{eq:2}
				\begin{aligned} 
					&	2\rho\| \widehat\vx^* - \vx^*\|  \\ \stackrel{\cref{eq:kktvio22}}{\geq} & \dist\left(\vzero,  \partial f\left(\widehat\vx^*\right) +\beta  \partial \sum_{i\in\mathcal{J}_1} [g_i\left(\widehat\vx^*\right)]_+ +   \sum_{i\in\mathcal{J}_2} (\boldsymbol{\lb}_{\widehat\vx^*})_i\partial g_i\left(\widehat\vx^*\right)  +\mathcal{N}_{\mathcal{X}}(\widehat\vx^*)\right) 
					{\geq} \beta\theta - l_f.  
					%\stackrel{\text{\Cref{cor:1},~\cref{eq:2}}}{\geq} \beta\theta - l_f. \label{eq:1}
				\end{aligned}
			\end{equation} 
			
			{Next, we combine events $\mathcal{A}_1$ and $\mathcal{A}_2$. From}~\cref{eq:2} and by the definition of $\mathrm{Prob}_1$ and $\mathrm{Prob}_2$, we have 
			\vspace{-3mm}
			\begin{equation*}
				\vspace{-1mm}
				0 \cdot \mathrm{Prob}_1 + (\beta \theta - l_f) \mathrm{Prob}_2 \leq \mathbb{E}\left[\|2\rho (\widehat{\vx}^* - \vx^*)\|\right] \overset{\cref{eq:eps-dist-xbar-x}}\leq \overline{\epsilon}.
			\end{equation*}
			%Hence from $\overline{\epsilon} \leq \frac{\beta\theta - l_f}{\beta\theta}\epsilon$, i
			Thus
			$
			\mathrm{Prob}_2 \leq \frac{\overline{\epsilon}}{\beta\theta - l_f}.%\leq \frac{\epsilon}{\beta\theta}.
			$
			Splitting the integration for full expectation to $\cA_1$ and $\cA_2$ gives %{\color{red}How to get the last inequality? Smaller $\bar\epsilon$?}
			%We {finally} compute expectations based on these probabilities. It holds 
			\begin{equation}
				\begin{aligned}
					\label{eq:kkttotal2}
					&\mathbb{E}\left[\dist\left(\vzero, \partial f\left(\widehat\vx^*\right) +\sum_{i=1}^{m}(\boldsymbol{\lb}_{\widehat\vx^*})_i\partial g_i\left(\widehat\vx^*\right)+\mathcal{N}_{\mathcal{X}}(\widehat\vx^*)\right)\right] 
					\\&\stackrel{\cref{eq:event12},~\cref{eq:2}}{\leq} \mathbb{E}\left[\|2\rho (\widehat{\vx}^* - \vx^*)\|\right] + \mathrm{Prob}_2 m\beta l_g  \leq \overline{\epsilon} + \frac{\overline{\epsilon}m\beta l_g }{\beta \theta - l_f} \leq \epsilon,
				\end{aligned}
			\end{equation}
			where the last inequality uses $\overline{\epsilon}\leq \frac{\beta \theta -l_f  }{ m\beta l_g +\beta \theta -l_f}\epsilon$.
			{In addition}, 
			we have 
			$$
			\mathbb{E}[g_i(\widehat\vx^*)] {\leq} \nu \mathrm{Prob}_1 + B_g \mathrm{Prob}_2 \leq \nu + \frac{B_g \overline{\epsilon}}{\beta\theta - l_f} \leq \epsilon,\forall\, i\in [m] ,$$ %{ and } \mbox{$|(\lambda_{\widehat\vx^*})_ig_i(\widehat\vx^*)| \leq \epsilon $} for all $i\in[m]$,
			where we have used %~\cref{eq:event12},~\cref{eq:2}, 
			$\nu \leq \frac{\epsilon}{2}$ and $\overline\epsilon \leq \frac{\beta \theta - l_f}{2 B_g}$.  
			Therefore, by recalling \cref{eq:cs-eps}, we have that $\widehat\vx^*$ is an $\epsilon$-KKT point of problem~\cref{eq:model} in expectation and  
			 complete the proof.	
		\end{proof}
		
		 {\color{blue}\begin{remark} 
		 		\Cref{thm:main} relies on several key assumptions, which we clarify below to highlight their implications and scope:
		 		\begin{itemize}
		 			\item[$\mathrm{(i)}$]  The %theorem assumes access to a 
					assumed near-$\bar\epsilon$ stationary point $\vx^*$ of the penalized problem~\cref{eq:smoothed} can be found by   %. This   aligns closely with the design  of the 
					\verb|3S-Econ|   introduced in the next section. %subsequently. %as it specifically targets computation of such points.
		 			
		 			\item[$\mathrm{(ii)}$]  The %theorem 
					required  \mbox{Slater-type CQ} with  \(B_g > m\rho_g  D^2\) %. This condition 
					will reduce to the classical Slater’s condition under convex case ($\rho_g = 0$), as formally established in \cref{prop:regurela}(i). The condition is milder than the strong feasibility condition in~\cref{eq:str-feas}; see \cref{prop:regurela}(ii). It could potentially be further relaxed when $\vg$ is deterministic; see also \cref{rem:deter}. Additionally, the parameter \(B_g\), which serves as an upper bound for \(\sum_{i\in[m]} [g_i(\vx)]_+\), can scale linearly with %the number of constraints 
					\(m\).
%		 			distinguishes   between convex and weakly convex settings for~$\vg$:
%		 			\begin{itemize}
%		 				\item \emph{Convex case ($\rho_g = 0$):} This condition reduces to the classical Slater’s condition under convexity, as formally established in \cref{prop:regurela}(i).
%		 				
%		 				\item \emph{Weakly convex case ($\rho_g > 0$):} 
%		 				This condition is notably milder than the strong feasibility requirement given in~\cref{eq:str-feas} (see \cref{prop:regurela}(ii)). It could potentially be further relaxed when $\vg$ is deterministic (see also \cref{rem:deter}). Additionally, the parameter \(B_g\), which serves as an upper bound for \(\sum_{i\in[m]} g_i(\vx)\), may scale linearly with the number of constraints \(m\).
%		 			\end{itemize}
		 			
		 			\item[$\mathrm{(iii)}$]  The %complexity bounds 
					values of parameters in~\cref{eq:epsilon} %explicitly 
					depend on several constants, which may be unknown in general. %known problem-specific constants $B_f$, $\rho_f$, $\rho_g$, and $D$, as well as potentially unknown parameters $B_g$, $B$, and $\overline{\rho}$. 
					%As illustrated in the explicit examples provided in 
					Nevertheless, we show in \cref{eq:data} that these constants %, the parameters $B_g$, $B$, and $\overline{\rho}$ 
					can be computed for certain applications, mitigating uncertainty and facilitating precise complexity assessments.
		 		\end{itemize}
			\end{remark}
			}

\section{A  SPIDER-type stochastic subgradient algorithm}
\label{sec:algorithm}
In this section, we present our algorithm, %{\color{blue} 
termed \verb|3S-Econ|, %(SPIDER-type Stochastic Subgradient method for Exact CONstraint handling),} 
for solving problem~\cref{eq:model}. It is developed based on a subgradient method for problem~\cref{eq:smoothed} by using the SPIDER technique to estimate constraint function values. The algorithm framework is outlined in~\Cref{sec:af}. In~\Cref{sec:complexity}, we prove that  in expectation, \verb|3S-Econ| can find a near-$\epsilon$ stationary point of problem~\cref{eq:smoothed}. Building on the results presented in the previous section, %and~\Cref{sec:exact2}, 
we further establish in~\Cref{sec:all} that an \((\epsilon, \epsilon)\)-KKT point  in expectation of problem~\cref{eq:model} can be obtained.

\subsection{Algorithm framework}\label{sec:af}
We employ the SPIDER technique~\cite{fang2018spider}  to obtain an estimator %$u_i^{(k)}$ of $g_i$ at the $k$-th iteration for each $i\in[m]$. 
$\vu^{(k)}\in\RR^m$ of $\vg$ at the $k$-th iteration. Specifically, every~$q$ iterations, we draw a big batch of $S_1$ samples from $\mathbb{P}_{\vg}$, 
while for other iterations, a smaller batch of~$S_2$ samples {is} %are 
drawn. The estimator is set as
\begin{equation}
	\label{eq:updateu}
	%		u_i^{(k)} = \left\{
	%		\begin{array}{lll}
		%			g_i(\vx^{(k)}, \mathcal{B}_k), & \text{ with }|\mathcal{B}_k| = S_1, & \text{ if }\mathrm{mod}(k, q) = 0 ,\\[1mm]
		%			u_i^{(k-1)} + g_i(\vx^{(k)}, \mathcal{B}_k) -g_i(\vx^{(k-1)}, \mathcal{B}_{k}), & \text{ with }|\mathcal{B}_k| = S_2, & \text{ otherwise},
		%		\end{array}
	%		\right.
	{\vu^{(k)} = \left\{
		\begin{array}{lll}
			\vg(\vx^{(k)}, \mathcal{B}_k), & \text{ with }|\mathcal{B}_k| = S_1, & \text{ if }\mathrm{mod}(k, q) = 0 ,\\[1mm]
			\vu^{(k-1)} + \vg(\vx^{(k)}, \mathcal{B}_k) -\vg(\vx^{(k-1)}, \mathcal{B}_{k}), & \text{ with }|\mathcal{B}_k| = S_2, & \text{ otherwise},
		\end{array}
		\right.
	}
\end{equation}
where $\vu^{(k)}=[u_1^{(k)},\ldots,u_m^{(k)}]\zz$, $\mathcal{B}_k$ contains identically independent 
samples from~$\mathbb{P}_{\vg}$, and $g_i(\mathbf{x}, \mathcal{B})$ is defined in~\cref{eq:g-approx}. With $\vu^{(k)}$, we then take a sample subgradient of $f$ and~$\{g_i\}_{i\in[m]}$ at $\vx^{(k)}$ and perform the update in~\cref{eq:newupdate}.  The pseudocode is given in~\Cref{alg:dad2-2}.
{ As detailed in \cref{thm:comple}, for a given error tolerance $\bar\epsilon>0$, we set the penalty parameter $\beta$ to a sufficiently large predefined constant, the smoothing parameter $\nu = O(\bar\epsilon)$, the step size $\alpha_k = O(\bar\epsilon^{2})$,  and $q = S_2  = \lceil \sqrt{S_1}\rceil$, with $S_1 = O(\bar\epsilon^{-4})$ in the generic stochastic case or $S_1 = N$ for the finite-sum case. 
	%In practical implementations, we set $S_1 = N$ (i.e., using the full dataset), $q = S_2 = \lceil \sqrt{S_1}\rceil$  and refer readers to \cref{sec:imple} for further details regarding the selection of other algorithmic parameters.
}

\begin{algorithm}[htbp]
	\caption{ {3S-Econ}: A \texttt{S}PIDER-type \texttt{S}tochastic \texttt{S}ubgradient algorithm for solving \texttt{E}xpectation \texttt{con}strained problem \eqref{eq:model}}\label{alg:dad2-2}
	\begin{algorithmic}[1]
		\STATE{\textbf{Input:} %initial point 
		$\vx^{(0)}\in\mathcal{X}$, $\beta>0$, $\nu>0$, and positive intergers $q$, $S_1, S_2$, and $K$.}
		
		\FOR{$k=0,1, \cdots, Kq-1$} 
			\STATE Sample $\cB_k$ and set $\vu^{(k)}$ by~\cref{eq:updateu}. %for each $i\in[m]$.			
			\STATE{Choose $\alpha_k>0$, generate a (stochastic) subgradient $\boldsymbol{\zeta}_f^{(k)}$ of $f$ and (stochastic) subgradient $\boldsymbol{\zeta}_{g_i}^{(k)}$ of $g_i$ for each $i\in[m]$ that are independent of $\cB_k$.}
			\STATE{Update $\vx$ by~\cref{eq:newupdate}.} %Calculate $\vx^{(k+1)}$ by~\cref{eq:updatex}.
			\ENDFOR
		\end{algorithmic}
	\end{algorithm}
	For the convenience of analysis, we {denote}
	\begin{equation}
		\begin{aligned}
			&\boldsymbol{\zeta}_{F^{\nu}}^{(k)} = \boldsymbol{\zeta}_{f}^{(k)}  + \beta \sum_{i=1}^{m} \mathrm{Proj}_{[0,1]}\left(\frac{g_i(\vx^{(k)})}{\nu}\right)\boldsymbol{\zeta}_{g_i}^{(k)} \text{, and }\\ &\vw^{(k)} = \sum_{i=1}^{m}\left( \mathrm{Proj}_{[0,1]}\left(\frac{u_i^{(k)}}{\nu}\right) - \mathrm{Proj}_{[0,1]}\left(\frac{g_i(\vx^{(k)})}{\nu}\right)\right)\boldsymbol{\zeta}_{g_i}^{(k)}.
		\end{aligned}
		\label{eq:zetakwk}
	\end{equation}
	Then the update in 	\cref{eq:newupdate} can be {rewritten as}
	\begin{equation}\label{eq:updatex}
		\color{blue}	\vx^{(k+1)} = \mathrm{Proj}_{\mathcal{X}}\left(\vx^{(k)} -\alpha_k \left(\boldsymbol{\zeta}_{F^{\nu}}^{(k)} + \beta \vw^{(k)}\right)\right).
	\end{equation}
	%where we draw $S_2$ samples from $\mathbb{P}_{\vg}$, denoted as $\mathcal{B}_k$.  
	%It is important to note that $S_1 \gg S_2$, indicating that every $q$ iterations, we select a sufficiently accurate estimator for the function value.
	%As mentioned by~\cite{fang2018spider}, the SPIDER estimator tracks a deterministic quantity with significantly
	%lower sampling costs. 
	Throughout this section, we make the following assumption, where for brevity of notation, we let $\mathbb{E}_k[\cdot]:=\mathbb{E}\left[\cdot \mid \vx^{(0)}, \vx^{(1)}, \ldots, \vx^{(k)}\right]$, {representing the conditional expectation given} %which is the expectation operator conditional on 
	$\{\vx^{(0)}, \vx^{(1)}, \ldots, \vx^{(k)}\}$ for any $k\ge0$. In addition, $\mathrm{Var}(\boldsymbol{\zeta}):=\EE[\|\boldsymbol{\zeta} - \EE[\boldsymbol{\zeta}]\|^2]$ for a random vector $\boldsymbol{\zeta}$.
	\begin{assumption}
		\label{ass:gradient}
		The following conditions hold. 
		\begin{itemize}
			\item[$(a)$] \textbf{Unbiased function {estimate} and Lipschitz continuity of \(\vg\):} For all \(k \geq 0\), the samples in \(\mathcal{B}_k\) are mutually independent. It holds  
			%				$
			%				\mathbb{E}_k[g_i(\vx^{(k)}, \xi_{\vg})] = g_i(\vx^{(k)})
			%				$
			%				and %its variance is bounded as
			%				$
			%				\mathbb{E}\left[\left|g_i(\vx^{(k)}, \xi_{\vg}) - g_i(\vx^{(k)})\right|^2\right] \leq\sigma^2
			%				$
			{ $
				\mathbb{E}_k[\vg(\vx^{(k)}, \xi_{\vg})] = \vg(\vx^{(k)})
				$
				and %its variance is bounded as
				$
				\mathbb{E}\left[\left\|\vg(\vx^{(k)}, \xi_{\vg}) - \vg(\vx^{(k)})\right\|^2\right] \leq\sigma^2
				$}
			for any \(\xi_{\vg} \in \mathcal{B}_k\) and some \(\sigma > 0\). Moreover, {for all $\vx,\vy\in \cX$, it holds}  
			$
			\mathbb{E}\left[\left\|\vg(\vx, \xi_{\vg}) - \vg(\vy, \xi_{\vg})\right\|^2\right] \leq L_g^2 \|\vx - \vy\|^2.
			$
			
			\item[$(b)$] \textbf{Unbiased Stochastic Subgradients:} %At each iteration \(k\), we can access stochastic subgradients \(\boldsymbol{\zeta}_f^{(k)}\) and \(\boldsymbol{\zeta}_{\vg}^{(k)}\) such that
			For each $k$,
			$
			\mathbb{E}_k[\boldsymbol{\zeta}_f^{(k)}] \in \partial f(\vx^{(k)})$ and $\mathbb{E}_k[\boldsymbol{\zeta}_{g_i}^{(k)}] \in \partial g_i(\vx^{(k)})$ for each $i\in[m]$. In addition, there are $\sigma_1\ge 0$ and $\sigma_2\ge0$ such that $\mathrm{Var}(\boldsymbol{\zeta}_f^{(k)}) \le \sigma_1^2$ and $\mathrm{Var}(\boldsymbol{\zeta}_{g_i}^{(k)}) \le \sigma_2^2$  for each $i\in[m]$.  %\text{ and }  \|\boldsymbol{\zeta}_f^{(k)}\| \leq l_f,
			%		$
			%		as well as
			%		$
			%		\mathbb{E}_k[\boldsymbol{\zeta}_{\vg}^{(k)}] \in \partial g(\vx^{(k)}), \text{ and }  \|\boldsymbol{\zeta}_{\vg}^{(k)}\| \leq~l_g
			%		$
			%		for some $l_f,l_g>0$. \yx{Make an assumption on bounded variance instead of the almost-sure boundedness.}
		\end{itemize}
	\end{assumption}
	
	%Second, the update for $\vx^{(k+1)}$ 
	%with a step size $\alpha_k>0$ is given by 
	%\begin{align}
	%	\label{eq:updatex}
	%	&\vx^{(k+1)} = \vx^{(k)} -\alpha_k \left(\boldsymbol{\zeta}_{F^{\nu}}^{(k)} + \beta \vw^{(k)}\right) =
	%	 \vx^{(k)} -\alpha_k \left(\boldsymbol{\zeta}_{f}^{(k)} + \beta \mathrm{Proj}_{[0,1]}\left(\frac{u^{(k)}}{\nu}\right) \boldsymbol{\zeta}_{\vg}^{(k)} \right), \text{ where} \\
	%	 &
	%\end{align}
	By~\Cref{ass:gradient}(b),
	it follows that $\mathbb{E}[\boldsymbol{\zeta}_{F^{\nu}}^{(k)}]\in\partial F^{\nu}(\vx^{(k)})$.
	
	\subsection{Convergence analysis}\label{sec:complexity}
	In this subsection, we present the convergence analysis of~\Cref{alg:dad2-2}. 
	First, we derive a bound on the difference between $\vu^{(k)}$ and $\vg(\vx^{(k)})$.
	
	\begin{lemma}
		\label{lem:boundug}
		Under~\Cref{ass:problemsetup} and~\Cref{ass:gradient},  
		let $\{\vx^{(k)}, \vu^{(k)}\}_{k=0}^{Kq-1}$ %and $\{u^{(k)}\}_{k=0}^{Kq-1}$ 
		be generated by~\Cref{alg:dad2-2}. Define $r_g^{(k)}:=\mathbb{E}\left[\|\vu^{(k)} -\vg(\vx^{(k)})\|^2\right]$. {Then, for all $ k\ge0$,} it  holds $r_g^{(k)} \leq  \frac{\sigma^2}{S_1}+ \sum_{i=1}^{q} \frac{\alpha_{\lfloor k/q \rfloor q+i-1}^2  L_F^2 L_g^2}{S_2}$,  where $L_{F}^2 =2(\sigma_1^2 + l_f^2 + \beta^2m^2 \sigma_2^2 + \beta^2m^2 l_g^2)$.
	\end{lemma}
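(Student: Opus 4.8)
The plan is to treat $\vu^{(k)}$ as a SPIDER/SARAH estimator and control the squared error $r_g^{(k)}=\mathbb{E}[\|\vu^{(k)}-\vg(\vx^{(k)})\|^2]$ one epoch at a time. Fix $k$ and let $k_0=\lfloor k/q\rfloor q$ denote the most recent checkpoint, so that $\vu^{(k_0)}=\vg(\vx^{(k_0)},\cB_{k_0})$ is a fresh batch average and every later estimate in the epoch is built by the recursive correction in~\cref{eq:updateu}. Writing $\boldsymbol{\eta}^{(k)}=\vu^{(k)}-\vg(\vx^{(k)})$ and subtracting $\vg(\vx^{(k)})$ from that recursion gives $\boldsymbol{\eta}^{(k)}=\boldsymbol{\eta}^{(k-1)}+\vdelta^{(k)}$ for $k_0<k<k_0+q$, where $\vdelta^{(k)}=\vg(\vx^{(k)},\cB_k)-\vg(\vx^{(k-1)},\cB_k)-(\vg(\vx^{(k)})-\vg(\vx^{(k-1)}))$. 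I would first establish the base case: since $\vu^{(k_0)}$ averages $S_1$ i.i.d.\ unbiased samples of $\vg(\vx^{(k_0)},\cdot)$ whose variance is at most $\sigma^2$ by~\Cref{ass:gradient}(a), it follows immediately that $r_g^{(k_0)}\le \sigma^2/S_1$.

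The core of the argument is a telescoping of the squared error across the epoch. Conditioning on the $\sigma$-field $\mathcal{F}_k$ generated by all randomness drawn strictly before $\cB_k$ (so that $\vx^{(k)}$, $\vx^{(k-1)}$, and $\boldsymbol{\eta}^{(k-1)}$ are $\mathcal{F}_k$-measurable while $\cB_k$ is independent of $\mathcal{F}_k$), the correction $\vdelta^{(k)}$ is a conditionally mean-zero average of $S_2$ i.i.d.\ terms. Hence the cross term in $\mathbb{E}\|\boldsymbol{\eta}^{(k-1)}+\vdelta^{(k)}\|^2$ vanishes by the tower property and we obtain the clean recursion $r_g^{(k)}=r_g^{(k-1)}+\mathbb{E}[\|\vdelta^{(k)}\|^2]$. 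For the increment I would use that, conditionally, the variance of an average of $S_2$ i.i.d.\ copies equals $1/S_2$ times the single-term variance, bound that variance by the raw second moment, and then invoke the mean-square Lipschitz property $\mathbb{E}[\|\vg(\vx,\xi)-\vg(\vy,\xi)\|^2]\le L_g^2\|\vx-\vy\|^2$ from~\Cref{ass:gradient}(a) to reach $\mathbb{E}[\|\vdelta^{(k)}\|^2]\le \frac{L_g^2}{S_2}\,\mathbb{E}[\|\vx^{(k)}-\vx^{(k-1)}\|^2]$.

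It then remains to bound the step length. By nonexpansiveness of $\mathrm{Proj}_{\mathcal{X}}$ applied to~\cref{eq:newupdate}, one has $\|\vx^{(k)}-\vx^{(k-1)}\|\le \alpha_{k-1}\|\boldsymbol{\zeta}_f^{(k-1)}+\beta\sum_{i=1}^m \mathrm{Proj}_{[0,1]}(u_i^{(k-1)}/\nu)\boldsymbol{\zeta}_{g_i}^{(k-1)}\|$. Using that each $\mathrm{Proj}_{[0,1]}(\cdot)\in[0,1]$, the triangle and Cauchy--Schwarz inequalities, and the second-moment bounds $\mathbb{E}\|\boldsymbol{\zeta}_f^{(k-1)}\|^2\le \sigma_1^2+l_f^2$ and $\mathbb{E}\|\boldsymbol{\zeta}_{g_i}^{(k-1)}\|^2\le \sigma_2^2+l_g^2$ (each following from $\mathrm{Var}(\cdot)+\|\mathbb{E}[\cdot]\|^2$ together with~\Cref{ass:problemsetup}(c) and~\Cref{ass:gradient}(b)), I get exactly $\mathbb{E}[\|\vx^{(k)}-\vx^{(k-1)}\|^2]\le \alpha_{k-1}^2 L_F^2$ with $L_F^2=2(\sigma_1^2+l_f^2+\beta^2 m^2\sigma_2^2+\beta^2 m^2 l_g^2)$. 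Substituting back yields $r_g^{(k)}\le r_g^{(k-1)}+\alpha_{k-1}^2 L_F^2 L_g^2/S_2$, which I unroll from the checkpoint to obtain $r_g^{(k)}\le \frac{\sigma^2}{S_1}+\frac{L_F^2L_g^2}{S_2}\sum_{j=k_0}^{k-1}\alpha_j^2$; extending the (non-negative) sum to the whole epoch $j=k_0,\dots,k_0+q-1$ produces the stated bound, and the base case $k=k_0$ is covered trivially.

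The main obstacle I expect is the careful bookkeeping of the conditioning: because $\vu^{(k)}$ accumulates dependence on all batches $\cB_{k_0},\dots,\cB_k$ within the epoch, the vanishing of the cross term and the per-step variance identity must be justified with respect to the richer filtration $\mathcal{F}_k$ rather than the iterate-only conditioning $\mathbb{E}_k$, using the mutual independence of the batches in~\Cref{ass:gradient}(a) and the fact that the subgradients at step $k-1$ are drawn independently of $\cB_k$. Everything else---the base-case variance, the Lipschitz bound, and the step-size estimate producing $L_F^2$---is routine once this martingale-difference structure is in place.
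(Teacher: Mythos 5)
Your proposal is correct and takes essentially the same route as the paper's proof of \cref{lem:boundug}: the fresh-batch bound $\sigma^2/S_1$ at each checkpoint, the martingale-difference recursion $r_g^{(k)}=r_g^{(k-1)}+\mathbb{E}[\|\vdelta^{(k)}\|^2]$ with the cross term killed by unbiasedness and batch independence, the per-sample variance decomposition giving the $1/S_2$ factor, the mean-square Lipschitz bound, the projection-nonexpansiveness step estimate $\mathbb{E}[\|\vx^{(k)}-\vx^{(k-1)}\|^2]\le\alpha_{k-1}^2 L_F^2$ with the identical constant $L_F^2$, and the epoch-wise unrolling as in \cref{eq:bd-chain-r_g}. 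Your explicit filtration bookkeeping with $\mathcal{F}_k$ merely formalizes the conditioning the paper invokes implicitly via the unbiasedness of $\vg(\cdot,\xi_{\vg})$ and the mutual independence of the samples in $\cB_k$.
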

	
	\begin{proof}
		{Case (i):} When $\mathrm{mod}(k,q)=0$,  
		{~\cref{eq:updateu} and~\Cref{ass:gradient}(a) give}
		%we obtain from~\cref{eq:updateu} and~\Cref{ass:gradient}(a) that 
		\begin{equation}
			\label{eq:rqnq}
			r_g^{(k)}=\mathbb{E}\left[\left\|\vg(\vx^{(k)},\mathcal{B}_{k}) -\vg(\vx^{(k)})\right\|^2\right] \leq \frac{\sigma^2}{S_1}.
		\end{equation}
		%	Now, we consider the case 
		{Case (ii):} When $\mathrm{mod}(k,q)>0$, %In this case, 
		%there exist nonnegative integers $n$ and $s< q$ such that $k=nq+s$.
		it follows that %for $\xi_{\vg}\in\mathcal{B}_{k} $, 
		\begin{align}
			\notag
			&\mathbb{E}\left[\left\|\vu^{(k)} -\vg(\vx^{(k)})\right\|^2\right]  \stackrel{\cref{eq:updateu}}{=} \mathbb{E}\left[\left\|\vu^{(k-1)} + \vg(\vx^{(k)},\mathcal{B}_k) -\vg(\vx^{(k-1)}, \mathcal{B}_{k}) -\vg(\vx^{(k)})\right\|^2\right] \\ 
			\notag	& = r_g^{(k-1)}+\mathbb{E}\left[\left\|\vg(\vx^{(k-1)})+ \vg(\vx^{(k)}, \mathcal{B}_k) -\vg(\vx^{(k-1)}, \mathcal{B}_{k}) -\vg(\vx^{(k)})\right\|^2\right]\\
			\notag	& = r_g^{(k-1)}+\frac{1}{|\mathcal{B}_k|^2}\sum_{\xi_{\vg}\in \mathcal{B}_k}\mathbb{E}\left[\left\|\vg(\vx^{(k-1)})+ \vg(\vx^{(k)}, \xi_{\vg}) -\vg(\vx^{(k-1)}, \xi_{\vg}) -\vg(\vx^{(k)})\right\|^2\right]\\
			\notag	 & \le r_g^{(k-1)}+\frac{1}{|\mathcal{B}_k|^2}\sum_{\xi_{\vg}\in \mathcal{B}_k}\mathbb{E}\left[\left\|\vg(\vx^{(k)}, \xi_{\vg}) -\vg(\vx^{(k-1)}, \xi_{\vg}) \right\|^2\right]\\
			& \leq r_g^{(k-1)}+ \frac{L_g^2}{S_2}\mathbb{E}\left[\| \vx^{(k)}-\vx^{(k-1)} \|^2\right] 
			\leq r_g^{(k-1)}+\frac{\alpha_{k-1}^2 L_F^2 L_g^2}{S_2},
			\label{eq:keyrk} 
		\end{align}
		where the second equality holds {due to} the unbiasedness of $\vg(\vx, \xi_{\vg})$, the third equality follows from the mutual independence of samples in $\mathcal{B}_k$, %the first inequality comes from~\cite[Equation A.3]{fang2018spider}, 
		the second inequality follows by~\Cref{ass:gradient}(a), and the last one uses  %$\| \vx^{(k)}-\vx^{(k-1)} \|^2 =\left\|\alpha_{k-1}(\boldsymbol{\zeta}_{F^{\nu}}^{(k-1)} + \beta \vw^{(k-1)})\right\|^2\leq \alpha_{k-1}^2 L_F^2$.
		\begin{equation}\label{eq:bd-x-diff-exp}
			%\small
			\begin{aligned}
				&\mathbb{E}\left[\| \vx^{(k+1)}-\vx^{(k)} \|^2\right] 
				= \mathbb{E}\left[   
				\left\|  \mathrm{Proj}_{\mathcal{X}}\left(\vx^{(k)} -\alpha_k \left(\boldsymbol{\zeta}_{F^{\nu}}^{(k)} + \beta \vw^{(k)}\right)\right) - \mathrm{Proj}_{\mathcal{X}}\left( \vx^{(k)}\right)  \right\|^2
				\right] \\
				&\leq \textstyle \alpha_k^2\EE\left[\left\| \left(\boldsymbol{\zeta}_f^{(k)} + \beta \sum_{i=1}^m\mathrm{Proj}_{[0,1]}\left(\frac{u_i^{(k)}}{\nu}\right) \boldsymbol{\zeta}_{g_i}^{(k)} \right)\right\|^2\right] \\&\le  2 \alpha_k^2 \left(\EE\left[\|\boldsymbol{\zeta}_f^{(k)}\|^2 +  \beta^2 m \sum_{i=1}^m \left\| \boldsymbol{\zeta}_{g_i}^{(k)}\right\|^2\right]\right)
				\\&\le   2 \alpha_k^2 \left(\mathrm{Var}(\boldsymbol{\zeta}_f^{(k)})+ \|\EE[\boldsymbol{\zeta}_f^{(k)}]\|^2 + \beta^2 m \sum_{i=1}^m\mathrm{Var}(\boldsymbol{\zeta}_{g_i}^{(k)}) + \beta^2 m\sum_{i=1}^m\|\EE[\boldsymbol{\zeta}_{g_i}^{(k)}]\|^2\right) \\&\le  2 \alpha_k^2(\sigma_1^2 + l_f^2 + \beta^2 m^2 \sigma_2^2 + \beta^2m^2 l_g^2)= \alpha_k^2 L_F^2.
			\end{aligned}
		\end{equation}	
		Since $\mathrm{mod}(k,q)>0$, 
		there exist nonnegative integers $t$ and $s< q$ such that $k=tq+s$.
		Summing~\cref{eq:keyrk} over $k=tq+1, \ldots, tq+s-1 ,tq+s$, and {applying}~\cref{eq:rqnq}, we
		have  
		\begin{equation}\label{eq:bd-chain-r_g}
			r_g^{(k)} \leq r_g^{(tq)} + \sum_{i=1}^s \frac{\alpha_{tq+i-1}^2 L_F^2 L_g^2}{S_2} \leq \frac{\sigma^2}{S_1}+ \sum_{i=1}^{q} \frac{\alpha_{\lfloor k/q \rfloor q+i-1}^2  L_F^2 L_g^2}{S_2} .
		\end{equation}
		The proof is then completed by combining Case (i) and Case (ii). %the above two cases.
	\end{proof}
	
	The following corollary is obtained immediately from~\cref{eq:bd-chain-r_g} by noticing $r_g^{(tq)}=0$ if \(\mathbb{P}_{\vg}\) is a uniform distribution over $N$ data points and $\cB_{tq}$ contains all the data points. 
	
	\begin{corollary}
		\label{cor:boundug2}
		%Under Assumptions of~\cref{lem:boundug}, 
		Under~\Cref{ass:problemsetup} and~\Cref{ass:gradient}, suppose that \(\mathbb{P}_{\vg}\) is a uniform distribution over a finite dataset~$\Xi$ of size $N$. Let $\cB_k=\Xi$ if $\mathrm{mod}(k, q)=0$.  Then $r_g^{(k)}=0$ if $\mathrm{mod}(k, q)=0$ and $r_g^{(k)} \leq   \sum_{i=1}^{q} \frac{\alpha_{\lfloor k/q \rfloor q+i-1}^2  L_F^2 L_g^2}{S_2} $ otherwise,  where $L_{F} $ is given in \cref{lem:boundug}.
	\end{corollary}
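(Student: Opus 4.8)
The plan is to mirror the two-case structure in the proof of \Cref{lem:boundug}, changing only what happens at the ``checkpoint'' iterations $k$ with $\mathrm{mod}(k,q)=0$. The single new ingredient is that, in the finite-sum regime with $\cB_k=\Xi$, the big-batch estimate at a checkpoint is \emph{exact} rather than merely having squared error $\sigma^2/S_1$; everything else is inherited verbatim.

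First I would treat the case $\mathrm{mod}(k,q)=0$. Here the hypothesis gives $\cB_k=\Xi$, so by the update rule \cref{eq:updateu} together with \cref{eq:g-approx}, $\vu^{(k)}=\vg(\vx^{(k)},\Xi)=\frac{1}{N}\sum_{\xi_{\vg}\in\Xi}\vg(\vx^{(k)},\xi_{\vg})$. Since $\mathbb{P}_{\vg}$ is the uniform distribution over $\Xi$, this sample average coincides \emph{exactly} with the population mean $\vg(\vx^{(k)})=\mathbb{E}_{\xi_{\vg}\sim\mathbb{P}_{\vg}}[\vg(\vx^{(k)},\xi_{\vg})]$. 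Thus $\vu^{(k)}-\vg(\vx^{(k)})=\vzero$ holds deterministically, whence $r_g^{(k)}=\mathbb{E}[\|\vu^{(k)}-\vg(\vx^{(k)})\|^2]=0$. This replaces the bound \cref{eq:rqnq} used in \Cref{lem:boundug}, where only the variance estimate $\sigma^2/S_1$ was available for a random minibatch.

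Next, for $\mathrm{mod}(k,q)>0$, I would reuse the per-step recursion \cref{eq:keyrk} unchanged, since its derivation relies only on \Cref{ass:gradient}(a), the mutual independence of samples in $\cB_k$, and the step-size bound \cref{eq:bd-x-diff-exp}, none of which depends on the support of $\mathbb{P}_{\vg}$ being continuous. Writing $k=tq+s$ with $0<s<q$ and summing \cref{eq:keyrk} over the indices $tq+1,\ldots,tq+s$ telescopes to
\[
r_g^{(k)}\le r_g^{(tq)}+\sum_{i=1}^{s}\frac{\alpha_{tq+i-1}^2 L_F^2 L_g^2}{S_2}.
\]
Substituting $r_g^{(tq)}=0$ from the first step (in place of the $\sigma^2/S_1$ that appears in \cref{eq:bd-chain-r_g}), noting $\lfloor k/q\rfloor q=tq$, and enlarging the sum from $s$ to $q$ nonnegative terms then yields the claimed bound $r_g^{(k)}\le\sum_{i=1}^{q}\alpha_{\lfloor k/q\rfloor q+i-1}^2 L_F^2 L_g^2/S_2$.

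There is no substantive obstacle: the corollary is a bookkeeping specialization of \Cref{lem:boundug}. The only point warranting care is the first step, namely verifying that the checkpoint estimate satisfies an \emph{exact} equality (zero error, not merely small variance) precisely because $\cB_k$ exhausts the finite support of the uniform distribution $\mathbb{P}_{\vg}$; once this is established, the remainder follows directly from the already-proven recursion.
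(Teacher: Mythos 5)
Your proposal is correct and follows essentially the same route as the paper: the paper obtains the corollary directly from \cref{eq:bd-chain-r_g} by observing that $r_g^{(tq)}=0$ when $\cB_{tq}=\Xi$ exhausts the finite support of the uniform distribution, which is exactly your checkpoint argument, with the non-checkpoint case inherited verbatim from the recursion \cref{eq:keyrk} in \Cref{lem:boundug}. Your write-up merely spells out what the paper states in one line, including the correct observations that the full-batch average equals the population mean exactly and that the telescoping and enlargement of the sum from $s$ to $q$ terms carry over unchanged.
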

	
	Next, we bound the difference between the Moreau envelope at two consecutive iterates. {Our analysis follows %a similar approach to 
		that of~\cite[Theorem 3.1]{davis2019stochastic}, with the key modification in how we handle and bound the term \(\mathbb{E}\left[\left\langle\widehat{\vx}^{(k)} - \vx^{(k)}, \beta \vw^{(k)}\right\rangle\right]\).}
	\begin{lemma}
		\label{lem:diffmore}
		Under~\Cref{ass:problemsetup} and~\Cref{ass:gradient}, {suppose    $\{\vx^{(k)}\}_{k=0}^{Kq-1}$ and $\{\vu^{(k)}\}_{k=0}^{Kq-1}$ are} generated by~\Cref{alg:dad2-2}. With $L_{F}$ given in \cref{lem:boundug},
		it holds   
		\begin{equation}
			\label{eq:diffmore}
			\begin{aligned}
				&\mathbb{E}\left[F^{\nu}_{\frac{1}{2\rho}}(\vx^{(k+1)})\right] \\\leq & \mathbb{E}\left[F^{\nu}_{\frac{1}{2\rho}}(\vx^{(k)})\right]-\frac{\alpha_k}{{4}} \mathbb{E}\left[\left\|\nabla F^{\nu}_{\frac{1}{2\rho}}(\vx^{(k)})\right\|^2\right]+\frac{ \alpha_k \beta^2 (l_g^2+\sigma_2^2)}{\nu^2} r_g^{(k)}+{\alpha_k^2 {\rho} L_F^2}{}.		
			\end{aligned}
		\end{equation}
		%where $L_{F}=l_f+\beta l_g$.
	\end{lemma}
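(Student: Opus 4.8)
The plan is to follow the weakly-convex Moreau-envelope descent template of \cite{davis2019stochastic} with $F^\nu$ in the role of the objective, and to absorb the extra error direction $\beta\vw^{(k)}$ via a Young's inequality that borrows from the descent term. Write the search direction as $\vd^{(k)}:=\boldsymbol{\zeta}_{F^\nu}^{(k)}+\beta\vw^{(k)}$, so that \cref{eq:updatex} reads $\vx^{(k+1)}=\mathrm{Proj}_{\mathcal{X}}(\vx^{(k)}-\alpha_k\vd^{(k)})$, and set $\widehat{\vx}^{(k)}:=\argmin_{\vz\in\mathcal{X}}\{F^\nu(\vz)+\rho\|\vz-\vx^{(k)}\|^2\}$, the proximal point defining $F^\nu_{1/(2\rho)}(\vx^{(k)})$, with $\nabla F^\nu_{1/(2\rho)}(\vx^{(k)})=2\rho(\vx^{(k)}-\widehat{\vx}^{(k)})$ as in \cref{eq:gradientmore}. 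First I would plug $\widehat{\vx}^{(k)}$ into the defining minimization of $F^\nu_{1/(2\rho)}(\vx^{(k+1)})$, bound the resulting $\|\widehat{\vx}^{(k)}-\vx^{(k+1)}\|^2$ using nonexpansiveness of $\mathrm{Proj}_{\mathcal{X}}$ (legitimate since $\widehat{\vx}^{(k)}\in\mathcal{X}$) to replace $\vx^{(k+1)}$ by $\vx^{(k)}-\alpha_k\vd^{(k)}$, expand the square, and use $F^\nu(\widehat{\vx}^{(k)})+\rho\|\widehat{\vx}^{(k)}-\vx^{(k)}\|^2=F^\nu_{1/(2\rho)}(\vx^{(k)})$ to obtain the one-step recursion
\[
F^\nu_{1/(2\rho)}(\vx^{(k+1)})\le F^\nu_{1/(2\rho)}(\vx^{(k)})+2\rho\alpha_k\langle\vd^{(k)},\widehat{\vx}^{(k)}-\vx^{(k)}\rangle+\rho\alpha_k^2\|\vd^{(k)}\|^2 .
\]

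Next I would take $\mathbb{E}_k[\cdot]$ and split $\vd^{(k)}=\boldsymbol{\zeta}_{F^\nu}^{(k)}+\beta\vw^{(k)}$. For the $\boldsymbol{\zeta}_{F^\nu}^{(k)}$ part, \Cref{ass:gradient}(b) gives $\bar{\boldsymbol{\zeta}}^{(k)}:=\mathbb{E}_k[\boldsymbol{\zeta}_{F^\nu}^{(k)}]\in\partial F^\nu(\vx^{(k)})$, since the clamp weights $\mathrm{Proj}_{[0,1]}(g_i(\vx^{(k)})/\nu)$ are deterministic given $\vx^{(k)}$. Combining the $\rho$-weak convexity of $F^\nu$ from \cref{lem:weaklyFnu} (at $\vx^{(k)}$ against $\widehat{\vx}^{(k)}$) with the $\rho$-strong convexity of the proximal objective $F^\nu(\cdot)+\rho\|\cdot-\vx^{(k)}\|^2$ (minimized at $\widehat{\vx}^{(k)}$) yields $\langle\bar{\boldsymbol{\zeta}}^{(k)},\widehat{\vx}^{(k)}-\vx^{(k)}\rangle\le-\rho\|\widehat{\vx}^{(k)}-\vx^{(k)}\|^2$, which after multiplication by $2\rho\alpha_k$ equals $-\tfrac{\alpha_k}{2}\|\nabla F^\nu_{1/(2\rho)}(\vx^{(k)})\|^2$ via $\|\nabla F^\nu_{1/(2\rho)}(\vx^{(k)})\|^2=4\rho^2\|\widehat{\vx}^{(k)}-\vx^{(k)}\|^2$. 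For the quadratic term, \cref{eq:bd-x-diff-exp} already establishes $\mathbb{E}[\|\vd^{(k)}\|^2]\le L_F^2$, so $\rho\alpha_k^2\mathbb{E}[\|\vd^{(k)}\|^2]\le\rho\alpha_k^2L_F^2$, which is the last term of \cref{eq:diffmore}.

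The crux, and the only genuine departure from \cite{davis2019stochastic}, is the cross term $2\rho\alpha_k\beta\,\mathbb{E}_k[\langle\vw^{(k)},\widehat{\vx}^{(k)}-\vx^{(k)}\rangle]$. I would bound it by a Young's inequality whose weight is chosen to consume exactly half of the descent produced above,
\[
2\rho\alpha_k\beta\langle\vw^{(k)},\widehat{\vx}^{(k)}-\vx^{(k)}\rangle\le \rho^2\alpha_k\|\widehat{\vx}^{(k)}-\vx^{(k)}\|^2+\alpha_k\beta^2\|\vw^{(k)}\|^2 ,
\]
so that the first summand equals $\tfrac{\alpha_k}{4}\|\nabla F^\nu_{1/(2\rho)}(\vx^{(k)})\|^2$ and, added to $-\tfrac{\alpha_k}{2}\|\nabla F^\nu_{1/(2\rho)}(\vx^{(k)})\|^2$, leaves the coefficient $-\tfrac{\alpha_k}{4}$ of \cref{eq:diffmore}. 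It then remains to bound $\mathbb{E}[\|\vw^{(k)}\|^2]$: since $\mathrm{Proj}_{[0,1]}$ is $1$-Lipschitz, the coefficient of $\boldsymbol{\zeta}_{g_i}^{(k)}$ in $\vw^{(k)}$ is at most $\tfrac1\nu|u_i^{(k)}-g_i(\vx^{(k)})|$ in magnitude, and a Cauchy--Schwarz step over $i\in[m]$ together with the independence of $\{\boldsymbol{\zeta}_{g_i}^{(k)}\}$ from $\mathcal{B}_k$ (hence from $\vu^{(k)}$) and the second-moment bound $\mathbb{E}\|\boldsymbol{\zeta}_{g_i}^{(k)}\|^2\le l_g^2+\sigma_2^2$ from \Cref{ass:gradient}(b) and \Cref{ass:problemsetup}(c) gives a bound of the form $\tfrac{l_g^2+\sigma_2^2}{\nu^2}\,\mathbb{E}\|\vu^{(k)}-\vg(\vx^{(k)})\|^2=\tfrac{l_g^2+\sigma_2^2}{\nu^2}r_g^{(k)}$, the middle term of \cref{eq:diffmore}.

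I expect this last estimate to be the main obstacle. One must decouple the estimation error $u_i^{(k)}-g_i(\vx^{(k)})$, which is carried by $\mathcal{B}_k$ and the SPIDER recursion and whose conditional mean need not cancel because $\mathrm{Proj}_{[0,1]}$ is nonlinear, from the freshly drawn stochastic subgradients $\boldsymbol{\zeta}_{g_i}^{(k)}$; the independence built into \Cref{alg:dad2-2} is precisely what lets the clamp-weight error and the subgradient second moments factor in expectation. Combining the $m$ constraint directions is the delicate point, and the resulting dependence on $m$ is tracked consistently with the $m^2$ already appearing in $L_F^2$ in \cref{eq:bd-x-diff-exp}. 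Finally I would take total expectation over the conditioning and collect the three contributions to arrive at \cref{eq:diffmore}.
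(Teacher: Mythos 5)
Your proposal is correct and follows essentially the same route as the paper's own proof: the Davis--Drusvyatskiy Moreau-envelope descent applied to $F^\nu$, the same Young's inequality with weight $\rho^2\alpha_k$ chosen to consume exactly half of the descent term, the same bound $\rho\alpha_k^2 L_F^2$ via \cref{eq:bd-x-diff-exp}, and the same $1$-Lipschitz-clamp plus independence-from-$\cB_k$ argument for $\mathbb{E}[\|\vw^{(k)}\|^2]$. One bookkeeping remark: as you yourself anticipate, the Cauchy--Schwarz step over the $m$ constraint directions actually produces $\frac{m(l_g^2+\sigma_2^2)}{\nu^2}r_g^{(k)}$, which is what the paper's own proof derives and what \cref{thm:comple} subsequently uses, even though the factor $m$ is absent from the displayed bound \cref{eq:diffmore}.
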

	
	\begin{proof}
		For any $k\ge0$, 
		let $\widehat{\vx}^{(k)}=\operatorname{Prox}_{\frac{F^{\nu}}{2\rho}}({\vx}^{(k)})$. 
		We %successively 
		deduce that %\red{The first equality is not easy to understand.}
		\vspace{-2mm}
		\begin{align}
			& \notag
			\mathbb{E}_k\left[F^{\nu}_{\frac{1}{2\rho}}(\vx^{(k+1)})\right]  \leq \mathbb{E}_k\left[F^{\nu}(\widehat{\vx}^{(k)})+\rho\left\|\widehat{\vx}^{(k)}-\vx^{(k+1)}\right\|^2\right] \\
			%\notag& =F^{\nu}(\widehat{\vx}^{(k)})+\rho \mathbb{E}_k\left[\left\|\widehat{\vx}^{(k)} -\left(\vx^{(k)} -\alpha_k \left(\boldsymbol{\zeta}_{F^{\nu}}^{(k)} + \beta \vw^{(k)}\right)\right)\right\|^2\right] \\
			\notag \overset{\cref{eq:updatex}}= &
			\mathbb{E}_k\left[F^{\nu}(\widehat{\vx}^{(k)})+\rho\left\|\mathrm{Proj}_{\mathcal{X}}(\widehat{\vx}^{(k)})-\mathrm{Proj}_{\mathcal{X}}\left(\vx^{(k)} -\alpha_k \left(\boldsymbol{\zeta}_{F^{\nu}}^{(k)} + \beta \vw^{(k)}\right)\right)\right\|^2\right]
			\\ \notag\leq & 
			\mathbb{E}_k\left[F^{\nu}(\widehat{\vx}^{(k)})+\rho\left\| \widehat{\vx}^{(k)}- \left(\vx^{(k)} -\alpha_k \left(\boldsymbol{\zeta}_{F^{\nu}}^{(k)} + \beta \vw^{(k)}\right)\right)\right\|^2\right]
			\\ \notag= & 
			F^{\nu}(\widehat{\vx}^{(k)})+\rho\left\|{\vx}^{(k)}-\widehat{\vx}^{(k)}\right\|^2+2\rho\alpha_k \mathbb{E}_k\left[\left\langle\widehat{\vx}^{(k)}-{\vx}^{(k)}, \boldsymbol{\zeta}_{F^{\nu}}^{(k)} + \beta \vw^{(k)}\right\rangle\right] \\ \notag&\quad\quad\quad\quad+\rho \EE_k \left[\left\|{\vx}^{(k)}-{\vx}^{(k+1)}\right\|^2\right] \\
			\notag = & F^{\nu}_{\frac{1}{2\rho}}(\widehat{\vx}^{(k)})+2\rho \alpha_k \mathbb{E}_k\left[\left\langle\widehat{\vx}^{(k)}-{\vx}^{(k)}, \boldsymbol{\zeta}_{F^{\nu}}^{(k)} + \beta \vw^{(k)}\right\rangle\right]+\rho \EE_k \left[\left\|{\vx}^{(k)}-{\vx}^{(k+1)}\right\|^2\right] \\
			\notag \leq & F^{\nu}_{\frac{1}{2\rho}}(\widehat{\vx}^{(k)})+2\rho \alpha_k \left( F^{\nu}(\widehat{\vx}^{(k)})-F^{\nu}({\vx}^{(k)})+\frac{\rho}{2}\left\|{\vx}^{(k)}-\widehat{\vx}^{(k)}\right\|^2\right)\\ &\quad\quad\quad\quad +2\rho \alpha_k\mathbb{E}_k\left[\left\langle\widehat{\vx}^{(k)}-{\vx}^{(k)},   \beta \vw^{(k)}\right\rangle\right]+\rho \EE_k \left[\left\|{\vx}^{(k)}-{\vx}^{(k+1)}\right\|^2\right],
			\label{eq:diffmoreauf}
		\end{align}
		where the first inequality follows directly from the definition of $F^{\nu}_{\frac{1}{2\rho}}$, %the second one comes from $\|\boldsymbol{\zeta}_f^{(k)} + \beta \vw^{(k)}\| \leq l_f+\beta l_g=L_F$, 
		and the last one uses $\mathbb{E}_k[\boldsymbol{\zeta}_{F^{\nu}}^{(k)}]\in\partial F^{\nu}(\vx^{(k)})$  by~\Cref{ass:gradient}(b) and the $\rho$-weak convexity of $F^{\nu}$ by~\cref{lem:weaklyFnu}.
		
		{Since  %Observe that 
			the function $\vx \mapsto F^{\nu}(\vx)+\rho\left\|\vx-{\vx}^{(k)}\right\|^2$ is $\rho$-strongly convex, we have}
		\begin{equation}			
			\label{eq:difff}
			\begin{aligned}
				&F^{\nu}({\vx}^{(k)})-F^{\nu}(\widehat{\vx}^{(k)})-\frac{\rho}{2}\left\|{\vx}^{(k)}-\widehat{\vx}^{(k)}\right\|^2 \\= & \left(F^{\nu}({\vx}^{(k)})+\rho\|{\vx}^{(k)}-{\vx}^{(k)}\|^2\right)-\left(F^{\nu}(\widehat{\vx}^{(k)})+\rho\|{\vx}^{(k)}-\widehat{\vx}^{(k)}\|^2\right) 
				+\frac{\rho}{2}\left\|{\vx}^{(k)}-\widehat{\vx}^{(k)}\right\|^2 \\
				\geq & \rho\left\|{\vx}^{(k)}-\widehat{\vx}^{(k)}\right\|^2 \stackrel{\cref{eq:gradientmore}}{=}\frac{1}{4\rho}\left\|\nabla F^{\nu}_{\frac{1}{2\rho}}({\vx}^{(k)})\right\|^2.
			\end{aligned}
		\end{equation}
		\vspace{-1mm}
		%where the last equality follows from~\cref{eq:gradientmore}.
		Substituting~\cref{eq:difff} into~\cref{eq:diffmoreauf}, and using the law of expectation and~\cref{eq:bd-x-diff-exp}, we {obtain} 
		\begin{equation}
			\begin{aligned}
				\mathbb{E}\left[F^{\nu}_{\frac{1}{2\rho}}(\vx^{(k+1)})\right] &\leq   \mathbb{E} \left[F^{\nu}_{\frac{1}{2\rho}}({\vx}^{(k)})\right]-\mathbb{E}
				\left[\frac{\alpha_k}{2}\left\|\nabla F^{\nu}_{\frac{1}{2\rho}}({\vx}^{(k)})\right\|^2\right] \\ &\quad+\mathbb{E} \left[2\rho \alpha_k\left\langle\widehat{\vx}^{(k)}-{\vx}^{(k)},   \beta \vw^{(k)}\right\rangle\right]+{\alpha_k^2 \rho L_F^2}{} .
			\end{aligned}
			\label{eq:diffmoreau2}
		\end{equation}
		For the third term in the right hand side (RHS) of~\cref{eq:diffmoreau2}, {it holds that}
		\begin{align*}
			&   2\rho \alpha_k \mathbb{E} \left[\left\langle\widehat{\vx}^{(k)}-{\vx}^{(k)},   \beta \vw^{(k)}\right\rangle \right] \leq \mathbb{E} \left[{\alpha_k \rho^2} \|\widehat{\vx}^{(k)}-{\vx}^{(k)}\|^2 \right]+ \mathbb{E} \left[{\alpha_k \beta^2 }{}\|\vw^{(k)}\|^2 \right] \\
			= &   \frac{\alpha_k }{4} \mathbb{E	} \left[\left\|\nabla F^{\nu}_{\frac{1}{2\rho}}({\vx}^{(k)})\right\|^2 \right]  \\ &+ {\alpha_k \beta^2 }{}\mathbb{E} \left[\left\|\sum_{i=1}^m\left( \mathrm{Proj}_{[0,1]}\left(\frac{u_i^{(k)}}{\nu}\right) -\mathrm{Proj}_{[0,1]}\left(\frac{g_i(\vx^{(k)})}{\nu}\right)\right)\boldsymbol{\zeta}_{g_i}^{(k)}\right\|^2 \right]\\
			\leq &   \frac{\alpha_k }{4} \mathbb{E} \left[\left\|\nabla F^{\nu}_{\frac{1}{2\rho}}({\vx}^{(k)})\right\|^2\right]  \\ &+   \alpha_k \beta^2 m (l_g^2+\sigma_2^2) \mathbb{E} \left[\sum_{i=1}^m\left|\mathrm{Proj}_{[0,1]}\left(\frac{u_i^{(k)}}{\nu}\right) -\mathrm{Proj}_{[0,1]}\left(\frac{g_i(\vx^{(k)})}{\nu}\right)\right|^2\right] \\
			\leq &   \frac{\alpha_k }{4}\mathbb{E} \left[ \left\|\nabla F^{\nu}_{\frac{1}{2\rho}}({\vx}^{(k)})\right\|^2\right] +   \alpha_k \beta^2m (l_g^2+\sigma_2^2) \mathbb{E} \left[\sum_{i=1}^m\left| \frac{u_i^{(k)}}{\nu} -\frac{g_i(\vx^{(k)})}{\nu}\right|^2\right] \\ =& \frac{\alpha_k }{4} \mathbb{E} \left[\left\|\nabla F^{\nu}_{\frac{1}{2\rho}}({\vx}^{(k)})\right\|^2 \right]+ \frac{ \alpha_k \beta^2 m (l_g^2+\sigma_2^2)}{\nu^2} r_g^{(k)},
		\end{align*}
		where {the first equality follows from~\cref{eq:gradientmore} and~\cref{eq:zetakwk},} 
		%the frist equality uses~\cref{eq:gradientmore}, 
		and the second inequality comes from~\Cref{ass:problemsetup}(c) and~\Cref{ass:gradient}(b). %and the third inequality uses 1-Lipschitz of Proj operator.
		%Taking expectations of both sides, 
		Substituting the above inequality {back} into~\cref{eq:diffmoreau2} yields~\cref{eq:diffmore}. %The proof is then completed.
	\end{proof}
	
	We now give the convergence result in the following theorem.
	\begin{theorem}
		\label{thm:comple}
		Under~\Cref{ass:problemsetup} and~\Cref{ass:gradient}, %hold, and \(\mathbb{P}_{\vg}\)  is stochastic.  
		given $\overline{\epsilon}>0$, {suppose that} $\{\vx^{(k)}\}_{k=0}^{Kq-1}$ and $\{\vu^{(k)}\}_{k=0}^{Kq-1}$ are generated by~\Cref{alg:dad2-2} with
		\begin{align}
			\label{eq:q}
			&S_1=\lceil 16\overline{\epsilon}^{-2} \nu^{-2} m (l_g^2+\sigma_2^2) \beta^2 \sigma^2\rceil,\, q=S_2 =\lceil \sqrt{S_1} \rceil, %4 \overline{\epsilon}^{-1}\nu^{-1}  \sqrt{l_g^2+\sigma_2^2} \beta \sigma\rceil, 
			\,  \\
			&\alpha_k=\alpha = \min \Big\{  \frac{\overline{\epsilon}^2}{16\rho L_F^2},   \frac{\overline{\epsilon} \nu}{4 \beta \sqrt{m(l_g^2+\sigma_2^2)} L_gL_F}  \Big\},\text{ and } K=\lceil 16\overline{\epsilon}^{-2}q^{-1}\alpha^{-1} \Delta  \rceil.
			\label{eq:K}
		\end{align} 
		Here, %$\Delta:= \mathbb{E}[F^{\nu}_{\frac{1}{2\rho}}(\vx^{(0)})] -\min_{ \vx }\mathbb{E}[F^{\nu}_{\frac{1}{2\rho}}(\vx)]$ 
		$\Delta:= F^{\nu}_{\frac{1}{2\rho}}(\vx^{(0)}) -\min_{ \vx }F^{\nu}_{\frac{1}{2\rho}}(\vx)$, and  $L_{F}$ is given in \cref{lem:boundug}.
		{It holds} \vspace{-3mm}
		\begin{equation}\label{eq:bd-sum-grad}   
			%\min_{k\in[Kq]}\mathbb{E}\left[\left\|\nabla F^{\nu}_{\frac{1}{2\rho}}(\vx^{(k)})\right\|^2\right]  \leq~\overline{\epsilon}^2.
			\frac{1}{Kq}\sum_{k=0}^{Kq-1} \mathbb{E}\left[\left\|\nabla F^{\nu}_{\frac{1}{2\rho}}(\vx^{(k)})\right\|^2\right] \leq \overline{\epsilon}^2.
		\end{equation}
		\vspace{-3mm}
	\end{theorem}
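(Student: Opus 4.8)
The plan is to combine the per-iteration descent estimate of \cref{lem:diffmore} with the variance bound of \cref{lem:boundug} and then telescope. First I would specialize \cref{lem:diffmore} to the constant step size $\alpha_k\equiv\alpha$ and substitute the uniform bound on $r_g^{(k)}$ from \cref{lem:boundug}. The key simplification is that $q=S_2$ collapses the sum $\sum_{i=1}^{q}\alpha^2 L_F^2 L_g^2/S_2$ into the single term $\alpha^2 L_F^2 L_g^2$, so that $r_g^{(k)}\le \sigma^2/S_1+\alpha^2 L_F^2 L_g^2$ holds uniformly in $k$. Plugging this in produces, for every $k$, a recursion of the form
\[
\mathbb{E}\big[F^{\nu}_{\frac{1}{2\rho}}(\vx^{(k+1)})\big]\le \mathbb{E}\big[F^{\nu}_{\frac{1}{2\rho}}(\vx^{(k)})\big]-\frac{\alpha}{4}\,\mathbb{E}\big[\|\nabla F^{\nu}_{\frac{1}{2\rho}}(\vx^{(k)})\|^2\big]+C,
\]
where $C:=\alpha\beta^2 m(l_g^2+\sigma_2^2)\nu^{-2}\big(\sigma^2/S_1+\alpha^2 L_F^2 L_g^2\big)+\alpha^2\rho L_F^2$ collects the big-batch variance error, the SPIDER correction error, and the step size error.

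Next I would sum this inequality over $k=0,\dots,Kq-1$, telescoping the Moreau-envelope values and using $\mathbb{E}[F^{\nu}_{\frac{1}{2\rho}}(\vx^{(Kq)})]\ge \min_{\vx}F^{\nu}_{\frac{1}{2\rho}}(\vx)$ to bound the telescoped difference by $\Delta=F^{\nu}_{\frac{1}{2\rho}}(\vx^{(0)})-\min_{\vx}F^{\nu}_{\frac{1}{2\rho}}(\vx)$. This gives
\[
\frac{\alpha}{4}\sum_{k=0}^{Kq-1}\mathbb{E}\big[\|\nabla F^{\nu}_{\frac{1}{2\rho}}(\vx^{(k)})\|^2\big]\le \Delta + Kq\,C.
\]
Dividing by $\alpha Kq/4$ then exposes four explicit error terms on the right: a descent term $4\Delta/(\alpha Kq)$, a variance term proportional to $\sigma^2/S_1$, a SPIDER term proportional to $\alpha^2 L_F^2 L_g^2/\nu^2$, and a step size term $4\alpha\rho L_F^2$.

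The final step is a term-by-term verification that each parameter choice in \cref{eq:q} and \cref{eq:K} is calibrated so that its corresponding error term is at most $\overline{\epsilon}^2/4$. Concretely, $S_1\ge 16\overline{\epsilon}^{-2}\nu^{-2}m(l_g^2+\sigma_2^2)\beta^2\sigma^2$ handles the variance term; the first branch $\alpha\le \overline{\epsilon}^2/(16\rho L_F^2)$ handles the step size term; the second branch $\alpha\le \overline{\epsilon}\nu/\big(4\beta\sqrt{m(l_g^2+\sigma_2^2)}\,L_gL_F\big)$ handles the SPIDER term; and $K\ge 16\overline{\epsilon}^{-2}q^{-1}\alpha^{-1}\Delta$ handles the descent term. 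Summing the four quarter-budgets yields the claimed bound $\overline{\epsilon}^2$.

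The telescoping and the matching of quarter-budgets are mechanical. The main obstacle is the bookkeeping of constants: one must track the factor $m(l_g^2+\sigma_2^2)$ as it appears simultaneously in the SPIDER error coefficient (through \cref{lem:diffmore}) and in $S_1$ and $\alpha$ (through \cref{eq:q}), and confirm that the two branches of the $\min$ defining $\alpha$ are exactly the ones needed so that the step size term and the SPIDER term both fall below $\overline{\epsilon}^2/4$ at once. Ensuring these constants align precisely is where the care lies.
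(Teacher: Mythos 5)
Your proposal is correct and follows essentially the same route as the paper's proof: both combine the descent estimate of \cref{lem:diffmore} with the SPIDER error bound of \cref{lem:boundug} (noting $q=S_2$ collapses the inner sum), telescope the Moreau-envelope values against $\Delta$, divide by $\alpha Kq/4$, and match each of the four resulting error terms to an $\overline{\epsilon}^2/4$ budget via the parameter choices in \cref{eq:q}--\cref{eq:K}. The only cosmetic difference is that you substitute the uniform bound on $r_g^{(k)}$ before summing while the paper bounds the sum $\sum_k r_g^{(k)}$ directly, which is equivalent; your bookkeeping of the $m(l_g^2+\sigma_2^2)$ factor is also consistent with the paper's.
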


	\begin{proof}
		Since $\alpha_k=\alpha$,~\cref{lem:boundug} implies
		%\begin{align}\label{eq:bd-r-term}
		$$	\sum_{k=0}^{Kq-1}\frac{ \alpha_k \beta^2 (l_g^2+\sigma_2^2)}{\nu^2} r_g^{(k)} \leq \frac{ Kq \alpha \beta^2 m(l_g^2+\sigma_2^2)}{\nu^2} \left(\frac{\alpha^2 q L_F^2 L_g^2}{S_2}  + \frac{\sigma^2}{S_1}\right).
		$$
		%\end{align}
		Summing~\cref{eq:diffmore} from $k=0$ to $Kq-1$ and {applying} the above bound, we derive  
		\begin{align}
			%	\label{eq:diffmoresum}
			\mathbb{E}\left[F^{\nu}_{\frac{1}{2\rho}}(\vx^{(Kq)})\right] \leq &\, \mathbb{E}\left[F^{\nu}_{\frac{1}{2\rho}}(\vx^{(0)})\right]-\frac{\alpha}{{4}} \sum_{k=0}^{Kq-1} \mathbb{E}\left[\left\|\nabla F^{\nu}_{\frac{1}{2\rho}}(\vx^{(k)})\right\|^2\right]\notag\\%\leq \mathbb{E}\left[F^{\nu}_{\frac{1}{2\rho}}(\vx^{(0)})\right]-\frac{\alpha_k}{{4}} \sum_{k=0}^{Kq-1} \mathbb{E}\left[\left\|\nabla F^{\nu}_{\frac{1}{2\rho}}(\vx^{(k)})\right\|^2\right]+\sum_{k=0}^{Kq-1}\frac{ \alpha_k \beta^2 l_g^2}{\nu^2} r_g^{(k)}+\sum_{k=0}^{Kq-1}{\alpha_k^2 {\rho} L_F^2}{} \\
			& + \frac{ Kq \alpha \beta^2m (l_g^2+\sigma_2^2)}{\nu^2} \left(\frac{\alpha^2 q L_F^2 L_g^2}{S_2}  + \frac{\sigma^2}{S_1}\right)+ Kq{\alpha^2 {\rho} L_F^2}{}.
			\label{eq:diffmoresum}
		\end{align}
		Rearranging the terms and multiplying both sides of \cref{eq:diffmoresum} by $\frac{4}{Kq \alpha}$, we obtain
		\begin{align}
			\notag
			\frac{1}{Kq}\sum_{k=0}^{Kq-1} \mathbb{E}\left[\left\|\nabla F^{\nu}_{\frac{1}{2\rho}}(\vx^{(k)})\right\|^2\right] \leq & \frac{4}{Kq \alpha} \left(\mathbb{E}\left[F^{\nu}_{\frac{1}{2\rho}}(\vx^{(0)})\right] -\mathbb{E}\left[F^{\nu}_{\frac{1}{2\rho}}(\vx^{(Kq)})\right]  \right) \\ &\quad\quad\quad + \frac{4  \beta^2m (l_g^2+\sigma_2^2)}{\nu^2} \left(\frac{\alpha^2 q L_F^2 L_g^2}{S_2}  + \frac{\sigma^2}{S_1}\right)+ 4{\alpha {\rho} L_F^2}{} \notag \\
			\le & \frac{4 \Delta}{Kq \alpha} + \frac{4  \beta^2 (l_g^2+\sigma_2^2)}{\nu^2} \left(\frac{\alpha^2 q L_F^2 L_g^2}{S_2}  + \frac{\sigma^2}{S_1}\right)+ 4{\alpha {\rho} L_F^2}{}.
			\label{eq:kktvio}
		\end{align}
		
		Now we bound the RHS of~\cref{eq:kktvio}. From $\alpha \leq \frac{\overline{\epsilon}^2}{16\rho L_F^2}$, it follows that $4{\alpha {\rho} L_F^2}\leq \frac{\overline{\epsilon}^2}{4}$. The choice $S_1=\lceil 16 \overline{\epsilon}^{-2}\beta^2m \nu^{-2} (l_g^2+\sigma_2^2) \sigma^2\rceil$ yields $\frac{4  \beta^2m (l_g^2+\sigma_2^2)}{\nu^2}  \frac{\sigma^2}{S_1}\leq \frac{\overline{\epsilon}^2}{4}$. The conditions $q= S_2%= \lceil \sqrt{S_1} \rceil
		$ %\lceil 4 \overline{\epsilon}^{-1}\nu^{-1} \beta \sqrt{l_g^2+\sigma_2^2} \sigma\rceil$ 
		and $\alpha\leq \frac{\overline{\epsilon} \nu}{4 \beta \sqrt{m(l_g^2+\sigma_2^2)} L_gL_F}$ imply $\frac{4  \beta^2m (l_g^2+\sigma_2^2)}{\nu^2}  \frac{\alpha^2 q L_F^2 L_g^2}{S_2}\leq \frac{\overline{\epsilon}^2}{4}$. 
		Also, we obtain $\frac{4 \Delta }{Kq \alpha} %\left(\mathbb{E}\left[F^{\nu}_{\frac{1}{2\rho}}(\vx^{(0)})\right] -\mathbb{E}\left[F^{\nu}_{\frac{1}{2\rho}}(\vx^{(Kq)})\right] \right)
		\leq \frac{\overline{\epsilon}^2}{4}$ from $K=\lceil 16\overline{\epsilon}^{-2}q^{-1}\alpha^{-1} \Delta  \rceil$. Thus we have the desired result.
		%	$
		%		\frac{1}{Kq}\sum_{k=0}^{Kq-1} \mathbb{E}\left[\left\|\nabla F^{\nu}_{\frac{1}{2\rho}}(\vx^{(k)})\right\|^2\right] \leq \overline{\epsilon}^2,
		%	$
		%	which then implies the desired result.
	\end{proof}
	
	Under the conditions of~\Cref{cor:boundug2}, we have~\cref{eq:kktvio} with the vanishment of the term $\frac{\sigma^2}{S_1}$. Thus the following theorem can be {established} immediately. 
	
	\begin{theorem}
		\label{cor:comple}
		%	Suppose~\Cref{ass:problemsetup},~\Cref{ass:gradient} hold, and \(\mathbb{P}_{\vg}\) is a uniform distribution over a finite set of size~\(n\).   Given 
		Under the conditions of~\Cref{cor:boundug2},  given $\overline{\epsilon}>0$, let $\{\vx^{(k)}\}_{k=0}^{Kq-1}$ and $\{\vu^{(k)}\}_{k=0}^{Kq-1}$ be generated by~\Cref{alg:dad2-2} with $S_1=N$ {(i.e., taking the whole data)}, and other parameters set to those in~\cref{eq:q}--\cref{eq:K}. Then~\cref{eq:bd-sum-grad} holds.
	\end{theorem}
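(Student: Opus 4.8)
The plan is to mirror the proof of \Cref{thm:comple} almost verbatim, the single difference being that I invoke \Cref{cor:boundug2} in place of \Cref{lem:boundug} to bound the constraint-estimation error. I first observe that the per-iteration descent inequality \cref{eq:diffmore} of \Cref{lem:diffmore} was derived using only the update rule \cref{eq:updatex}, the unbiasedness in \Cref{ass:gradient}(b), and the \(\rho\)-weak convexity of \(F^\nu\) from \cref{lem:weaklyFnu}; it never used the specific form of the bound on \(r_g^{(k)}\). Hence \cref{eq:diffmore} continues to hold in the finite-sum regime, and summing it over \(k=0,\ldots,Kq-1\), telescoping the Moreau-envelope terms, dividing by \(\tfrac{Kq\alpha}{4}\), and rearranging reproduces the chain leading to \cref{eq:kktvio}.

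The only substantive step is the treatment of the accumulated error term \(\sum_{k=0}^{Kq-1}\tfrac{\alpha_k\beta^2 m(l_g^2+\sigma_2^2)}{\nu^2}r_g^{(k)}\). Under the hypotheses of \Cref{cor:boundug2}, taking \(S_1=N\) and \(\cB_{tq}=\Xi\) forces \(r_g^{(tq)}=0\) at every restart index, so the corollary gives \(r_g^{(k)}\le\sum_{i=1}^{q}\alpha_{\lfloor k/q\rfloor q+i-1}^2 L_F^2 L_g^2/S_2\) with no \(\sigma^2/S_1\) contribution. Feeding this bound into the summed descent inequality therefore produces the analogue of \cref{eq:kktvio} in which the variance term \(\tfrac{4\beta^2 m(l_g^2+\sigma_2^2)}{\nu^2}\cdot\tfrac{\sigma^2}{S_1}\) is simply absent, leaving only the three terms \(\tfrac{4\Delta}{Kq\alpha}\), \(\tfrac{4\beta^2 m(l_g^2+\sigma_2^2)}{\nu^2}\cdot\tfrac{\alpha^2 q L_F^2 L_g^2}{S_2}\), and \(4\alpha\rho L_F^2\).

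It then remains to verify, exactly as in \Cref{thm:comple}, that the parameter choices \cref{eq:q}--\cref{eq:K} push each of these three terms below \(\overline{\epsilon}^2/4\): the cap \(\alpha\le\overline{\epsilon}^2/(16\rho L_F^2)\) controls \(4\alpha\rho L_F^2\); the pairing \(q=S_2\) together with \(\alpha\le\overline{\epsilon}\nu/(4\beta\sqrt{m(l_g^2+\sigma_2^2)}L_gL_F)\) controls the SPIDER-drift term; and \(K=\lceil 16\overline{\epsilon}^{-2}q^{-1}\alpha^{-1}\Delta\rceil\) controls the optimization-gap term. Summing yields \(\tfrac34\overline{\epsilon}^2\le\overline{\epsilon}^2\), which is precisely \cref{eq:bd-sum-grad}.

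I expect no genuine obstacle, since the finite-sum case is strictly easier than the generic stochastic one: removing the \(\sigma^2/S_1\) term means one of the four error contributions vanishes outright rather than needing to be suppressed by a large batch. The only points requiring care are confirming that \cref{eq:diffmore} is truly independent of the variance bound (so it transfers unchanged), and that setting \(S_1=N\) is consistent with the remaining prescriptions in \cref{eq:q}, where one simply reads \(q=S_2=\lceil\sqrt{N}\rceil\) and leaves \(\alpha,K\) as specified.
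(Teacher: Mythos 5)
Your proposal is correct and takes essentially the same route as the paper: the paper's proof likewise observes that under the conditions of \Cref{cor:boundug2} the term \(\sigma^2/S_1\) vanishes from \cref{eq:kktvio}, so the argument of \Cref{thm:comple} carries over immediately. Your write-up merely spells out in fuller detail the same substitution of \Cref{cor:boundug2} for \Cref{lem:boundug} and the verification that the three surviving terms are each at most \(\overline{\epsilon}^2/4\).
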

	
	\subsection{Complexity result}\label{sec:all}
	In this subsection, we demonstrate that~\Cref{alg:dad2-2} can find an $(\epsilon,\epsilon)$-KKT point for problem~\cref{eq:model},  %under certain specific scenarios, 
	{using the results in \Cref{thm:comple} and \cref{thm:main}}.  
	%{\color{red}There is only one case in \cref{thm:main}.}	
	From these two theorems, we can directly {derive} the complexity of~\Cref{alg:dad2-2} to produce a near-$\overline{\epsilon}$ stationary solution of~\cref{eq:smoothed} in expectation by choosing an iterate from $\{\vx^{(k)}\}_{k=0}^{Kq-1}$ uniformly at random.
	\begin{corollary}\label{cor:complexity-both}
		Under~\Cref{ass:problemsetup} and \Cref{ass:gradient},~\Cref{alg:dad2-2} can produce a near-$\overline{\epsilon}$ stationary solution of~\cref{eq:smoothed} in expectation with $Kq=O\left(\overline{\epsilon}^{-4}\max\{1, \frac{\overline{\epsilon}}{\nu}\}\right)$ stochastic subgradients and $K(S_1 + (q-1)S_2)=O\left(\overline{\epsilon}^{-5}\nu^{-1}\max\{1, \frac{\overline{\epsilon}}{\nu}\}\right)$ stochastic function evaluations on $\vg$. If \(\mathbb{P}_{\vg}\) is a uniform distribution over a finite dataset $\Xi$ of size $N$, the number of stochastic function evaluations becomes $O\left(N + \sqrt{N}\overline{\epsilon}^{-4}\max\{1, \frac{\overline{\epsilon}}{\nu}\}\right)$.
	\end{corollary}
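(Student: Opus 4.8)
The plan is to treat this as a randomized-output corollary of the averaged bound in \Cref{thm:comple}, followed by a bookkeeping of the parameter choices in~\cref{eq:q}--\cref{eq:K}. First I would select the output iterate $\vx^{(\tau)}$ by drawing the index $\tau$ uniformly from $\{0,1,\ldots,Kq-1\}$, independently of the algorithm's randomness. Taking expectation over $\tau$ and invoking \cref{eq:bd-sum-grad} from \Cref{thm:comple} gives $\mathbb{E}[\|\nabla F^{\nu}_{\frac{1}{2\rho}}(\vx^{(\tau)})\|^2]=\frac{1}{Kq}\sum_{k=0}^{Kq-1}\mathbb{E}[\|\nabla F^{\nu}_{\frac{1}{2\rho}}(\vx^{(k)})\|^2]\leq\overline\epsilon^2$. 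Since $F^{\nu}$ is $\rho$-weakly convex by \cref{lem:weaklyFnu} and $t=\frac{1}{2\rho}<\frac{1}{\rho}$, the requirement in \Cref{def:nearkkt}(a) is met, so $\vx^{(\tau)}$ is a near-$\overline\epsilon$ stationary solution of~\cref{eq:smoothed} in expectation. The remainder is counting.

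For the subgradient count, each of the $Kq$ iterations evaluates one sample subgradient of $f$ and $m$ sample subgradients of $\{g_i\}$, so the number of stochastic subgradients is $O(Kq)$ with $m$ treated as a constant, and the corollary reports the quantity $Kq$. The only nontrivial estimate is the magnitude of $Kq$. From~\cref{eq:K} I have $K=O(\overline\epsilon^{-2}q^{-1}\alpha^{-1})$, hence $Kq=O(\overline\epsilon^{-2}\alpha^{-1})$. The step size in~\cref{eq:K} is a minimum of two terms of orders $\overline\epsilon^2$ and $\overline\epsilon\nu$, so $\alpha=\Theta\!\big(\overline\epsilon^2\min\{1,\nu/\overline\epsilon\}\big)$ and therefore $\alpha^{-1}=O(\overline\epsilon^{-2}\max\{1,\overline\epsilon/\nu\})$. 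Substituting yields $Kq=O(\overline\epsilon^{-4}\max\{1,\overline\epsilon/\nu\})$, as claimed. This step-size dichotomy is exactly what produces the $\max\{1,\overline\epsilon/\nu\}$ factor in every count.

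For the function-value count, I would use the epoch structure of~\cref{eq:updateu}: among the $Kq$ iterations there are $K$ ``big-batch'' iterations (those with $\mathrm{mod}(k,q)=0$), each costing $S_1$ evaluations, and $K(q-1)$ ``small-batch'' iterations, each costing $S_2$, for a total of $K\big(S_1+(q-1)S_2\big)$. The choice $q=S_2=\lceil\sqrt{S_1}\rceil$ in~\cref{eq:q} makes $(q-1)S_2\le q^2=O(S_1)$, so the per-epoch cost is $O(S_1)$ and the total is $O(KS_1)$. Writing $S_1=\Theta(q^2)$ gives $KS_1=O(\overline\epsilon^{-2}q^{-1}\alpha^{-1}\cdot q^2)=O(\overline\epsilon^{-2}q\,\alpha^{-1})$; with $S_1=O(\overline\epsilon^{-2}\nu^{-2})$ from~\cref{eq:q} and thus $q=O(\overline\epsilon^{-1}\nu^{-1})$, and with the bound on $\alpha^{-1}$ above, this collapses to $O(\overline\epsilon^{-5}\nu^{-1}\max\{1,\overline\epsilon/\nu\})$.

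For the finite-sum case I would invoke \Cref{cor:comple} in place of \Cref{thm:comple}, which yields the same averaged bound~\cref{eq:bd-sum-grad} with $S_1=N$, leaving the randomized-output argument unchanged. Now $q=\lceil\sqrt N\rceil$, so again $(q-1)S_2=O(N)$ and the total function cost is $O(KN)$. Because $K\ge1$ from the ceiling in~\cref{eq:K}, this is $O\big(N+KN\big)$ where $KN=O(\overline\epsilon^{-2}q^{-1}\alpha^{-1}N)=O(\overline\epsilon^{-2}\sqrt N\,\alpha^{-1})=O(\sqrt N\,\overline\epsilon^{-4}\max\{1,\overline\epsilon/\nu\})$, matching the stated $O(N+\sqrt N\overline\epsilon^{-4}\max\{1,\overline\epsilon/\nu\})$. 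I expect the main obstacle to be organizational rather than conceptual: correctly threading the two-term minimum defining $\alpha$ and the relation $q=S_2=\lceil\sqrt{S_1}\rceil$ through every estimate so that the $(q-1)S_2$ term is genuinely absorbed into $S_1$ and the $\max\{1,\overline\epsilon/\nu\}$ factor is tracked consistently across the subgradient and function-evaluation counts.
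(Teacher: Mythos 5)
Your proposal is correct and follows essentially the same route as the paper, which derives this corollary directly from \cref{thm:comple} (and \cref{cor:comple} for the finite-sum case) by selecting an output iterate uniformly at random from $\{\vx^{(k)}\}_{k=0}^{Kq-1}$ and then tracking the parameter choices in~\cref{eq:q}--\cref{eq:K}. Your bookkeeping is accurate throughout, including the two decisive points: $\alpha^{-1}=O(\overline\epsilon^{-2}\max\{1,\overline\epsilon/\nu\})$ from the two-term minimum defining the step size, and the absorption of $(q-1)S_2$ into $O(S_1)$ via $q=S_2=\lceil\sqrt{S_1}\rceil$ (with the ceiling in~\cref{eq:K} correctly producing the additive $N$ term in the finite-sum count).
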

	We then present the complexity results in the following proposition. %while the second focuses on a finite-sum scenario. %The first one is derived directly from \Cref{thm:2} and~\Cref{thm:comple}, whereas the second one follows directly from~\Cref{thm:2} and~\Cref{cor:comple}.
	\begin{proposition}%[convex constraint]
		\label{cor:convexsto}
		Suppose that~\Cref{ass:problemsetup}, \cref{def:2} with $B_g>m\rho_g  D^2$, and~\Cref{ass:gradient} hold.
		Then, given $\epsilon>0$, %
		%		Let
		%		 $\{\vx^{(k)}\}_{k=0}^{Kq-1}, \{u^{(k)}\}_{k=0}^{Kq-1}$ be generated by~\Cref{alg:dad2-2} with $S_1$, $S_2,$  $q$, $\alpha_k$ and $K$ given in~\cref{eq:q} and~\cref{eq:K} with $\epsilon$ replaced by $\overline{\epsilon}$. 
		%Then, 
		\Cref{alg:dad2-2} can produce an $(\epsilon,\epsilon)$-KKT point of problem~\cref{eq:model} in expectation %can be found 
		with 
		{\begin{itemize}
				\item[$\mathrm{(i)}$] $O(\epsilon^{-4})$ OGC/CGC and $ O(\epsilon^{-6})$ CFC if \(\mathbb{P}_{\vg}\) is a general distribution;
				\item[$\mathrm{(ii)}$] $O(\epsilon^{-4})$ OGC/CGC and $O(N+\sqrt{N} \epsilon^{-4})$ CFC if \(\mathbb{P}_{\vg}\) is a uniform distribution over a finite dataset $\Xi$ of size $N$.
		\end{itemize}} %, and with 
	\end{proposition}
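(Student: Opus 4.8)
The plan is to chain the convergence guarantee for the penalized problem~\cref{eq:smoothed} (\Cref{thm:comple} and \Cref{cor:comple}, summarized in \Cref{cor:complexity-both}) with the exact-penalization bridge \Cref{thm:main}, and then to bookkeep the $\epsilon$-dependence of the prescribed parameters. The entire content of the proposition is, in effect, a substitution of the correct $\epsilon$-scalings into results already established; the work is to verify that these scalings are mutually consistent.

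First I would fix the parameters demanded by \Cref{thm:main}. As $\epsilon\to 0$ the two lower bounds on $\beta$ in~\cref{eq:epsilon} approach the $\epsilon$-free constants $l_f/\theta$ and $2(B_f+\rho_f D^2)/(B_g-m\rho_g D^2)$, so $\beta$ may be taken as a fixed constant satisfying~\cref{eq:epsilon}; this in turn renders $\rho=\rho_f+\beta m\rho_g$ and $\theta=\sqrt{2B(\overline{\rho}-\rho_g)}$ constants as well. With $\beta,\rho,\theta,l_f,l_g,B_f,B_g$ all $\Theta(1)$, the upper bounds on $\overline\epsilon$ and $\nu$ in~\cref{eq:epsilon} are each $\Theta(\epsilon)$, so I would set $\overline\epsilon=\Theta(\epsilon)$ and $\nu=\Theta(\epsilon)$ (for instance at their respective upper bounds). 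The decisive consequence is that both the smoothing parameter and the target accuracy scale linearly in $\epsilon$, whence $\overline\epsilon/\nu=\Theta(1)$ and $\max\{1,\overline\epsilon/\nu\}=\Theta(1)$.

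Next I would run \Cref{alg:dad2-2} under the schedule~\cref{eq:q}--\cref{eq:K}. By \Cref{thm:comple} for a general $\mathbb{P}_{\vg}$, or \Cref{cor:comple} in the finite-sum case, the averaged squared Moreau gradient obeys~\cref{eq:bd-sum-grad}; selecting an index uniformly at random therefore yields a point $\vx^*$ that is a near-$\overline\epsilon$ stationary point of~\cref{eq:smoothed} in expectation, at the cost quantified in \Cref{cor:complexity-both}. Since the parameter choices above meet the hypotheses of \Cref{thm:main} (invoking \cref{def:2} with $B_g>m\rho_g D^2$), applying that theorem to $\vx^*$ gives $\mathbb{E}[\|\vx^*-\widehat\vx^*\|]\le\epsilon$ with $\widehat\vx^*$ an $\epsilon$-KKT point of~\cref{eq:model}; by \Cref{def:nearkkt}(c) this is precisely the assertion that $\vx^*$ is an $(\epsilon,\epsilon)$-KKT point of~\cref{eq:model} in expectation.

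Finally I would substitute $\overline\epsilon=\Theta(\epsilon)$ and $\nu=\Theta(\epsilon)$ into the complexity expressions of \Cref{cor:complexity-both}. The subgradient count is $Kq=O(\overline\epsilon^{-4}\max\{1,\overline\epsilon/\nu\})=O(\epsilon^{-4})$, giving both OGC and CGC of $O(\epsilon^{-4})$; for a general distribution the function-evaluation count is $O(\overline\epsilon^{-5}\nu^{-1}\max\{1,\overline\epsilon/\nu\})=O(\epsilon^{-5}\cdot\epsilon^{-1})=O(\epsilon^{-6})$, establishing part~(i), while in the finite-sum case it is $O(N+\sqrt N\,\overline\epsilon^{-4}\max\{1,\overline\epsilon/\nu\})=O(N+\sqrt N\,\epsilon^{-4})$, establishing part~(ii). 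The one genuine obstacle is the bookkeeping of the second paragraph: one must confirm that $\overline\epsilon$ and $\nu$ can be chosen simultaneously proportional to $\epsilon$ while every constant in~\cref{eq:epsilon} stays $\epsilon$-free, because it is exactly this balanced scaling $\nu\sim\overline\epsilon\sim\epsilon$ (forcing $\overline\epsilon/\nu=\Theta(1)$) that cancels the $\max\{1,\overline\epsilon/\nu\}$ factor and yields the advertised rates; everything else is a direct appeal to previously proved results.
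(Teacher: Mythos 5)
Your proposal is correct and follows essentially the same route as the paper's proof: choose $\beta$, $\overline\epsilon$, and $\nu$ per \cref{eq:epsilon}, run \Cref{alg:dad2-2} with the schedule in \cref{eq:q}--\cref{eq:K} (with $S_1=N$ in the finite-sum case), and chain \Cref{cor:complexity-both} with \Cref{thm:main}. The only difference is that you make explicit the bookkeeping the paper leaves implicit—verifying that $\beta$ stays $\Theta(1)$ so that $\overline\epsilon=\Theta(\epsilon)$ and $\nu=\Theta(\epsilon)$ can hold simultaneously, which cancels the $\max\{1,\overline\epsilon/\nu\}$ factor—and this is a faithful, correct elaboration rather than a different argument.
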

	
	\begin{proof}
		Choose $\overline{\epsilon}$, $\beta$, and $\nu$ {so} %such 
		that the conditions in~\cref{eq:epsilon} hold.  In addition, set $S_1$ {as} %to that 
		in~\cref{eq:q} if \(\mathbb{P}_{\vg}\) is a general distribution and $S_1=N$ {(i.e., taking the whole data)}, if \(\mathbb{P}_{\vg}\) is a uniform distribution over a finite dataset~$\Xi$ of size~$N$. Then set $S_2,$  $q$, $\alpha_k$ and $K$ to those in~\cref{eq:q} and~\cref{eq:K}. The claims now directly follow from~\Cref{thm:main}  and~\Cref{cor:complexity-both}.
	\end{proof}
	
	\begin{remark}
		\color{blue}
		\label{rem:deter} When $\vg$ is deterministic, our algorithm still converges to an $(\epsilon,\epsilon)$-KKT point of problem~\cref{eq:model} in expectation without requiring the condition \mbox{$B_g > m\rho_g  D^2$}. This condition appears only once in our analysis, specifically in the proof of inequality \eqref{eq:hnu}, to ensure well-definedness of parameters $\beta$ and $\nu$ in \eqref{eq:epsilon}. Under this assumption, inequality \eqref{eq:hnu} and the subsequent arguments guarantee that %$\max_{i\in[m]} g_i(\widehat{\vx}^*) \le B_g$, 
		$g_+(\widehat{\vx}^*) \le B_g$, which then enables us to use the Slater-type CQ assumption. %is precisely the property required for our  exact penalization.
		In contrast, for the deterministic case, the constraints $g_i(\vx)\le 0, \forall\, i\in[m]$ can be equivalently expressed as $\max_{i\in[m]} g_i(\vx)\le 0$. Leveraging this reformulation, we directly utilize the established convergence result for the special case of $m=1$, detailed in our arXiv first version~\cite{liu2025single}. This prior result ensures that, provided the initial point is sufficiently close to the feasible set $\cX$, all iterates produced by our method satisfy $\max_{i\in[m]} g_i(\widehat{\vx}^{(k)})\le B_g$. Consequently, the assumption $B_g > m\rho_g  D^2$ becomes unnecessary for this case, provided that we can obtain a (near) feasible initial point. %For a comprehensive discussion, we refer readers to the detailed proofs provided in our arXiv first  version~\cite{liu2025single}.  
	\end{remark} 
		\section{Numerical experiments}
			\label{sec:numer}
			
			In this section, we evaluate the empirical
			performance of the proposed~\Cref{alg:dad2-2} and  %We first introduce the datasets and models. %We then
			% introduce the implementation details,
			% including the default settings for the parameters and the test problems.
			% Next, we report the numerical comparison
			% among~\Cref{alg:dad2-2} 
			%We 
			compare %its performance 
			to three state-of-the-art
			approaches, including two different   IPPs~\cite{boob2023stochastic,ma2020quadratically} and the SSG in~\cite{huang2023single}.
			% %
			All the experiments %in this section 
			are performed on iMac with Apple~M1, a 3.2 GHz 8-core processor, and 16GB of RAM running MATLAB R2024a.
			
			\subsection{Tested problems and Datasets}\label{eq:data}
		We conduct experiments on solving two fairness-constrained problems and one Neyman-Pearson classification problem, which are instances of~\cref{eq:model}.
			%This subsection introduces three datasets and two problems in~\cite{huang2023single} where fairness constraints are applied, along with the corresponding hyperparameters used in our experiments.

			\textbf{Problem 1.} 
			The first tested problem is a classification problem with %receiver operating characteristic (ROC)-
			ROC-based fairness~\cite{vogel2021learning}. 
			It is formulated in~\cref{eq:fair1}.
			We obtain the minimum $\Phi^*$ and a solution $\vx^*$ by %running 
            {applying} a stochastic subgradient method to a large   number of iterations on $\min_{ \vx \in\cX}\Phi(\vx)$.
			{Afterward, }%Then 
            we set $\kappa_1=0.001 \cdot \Phi^*$, $D=5\|\vx^*\|$, and let $\Theta$ consist of 400 points equally spaced {between $\uline{a}-0.5\left(\overline{a}-\uline{a}\right)$ and $\overline{a}+0.5\left(\overline{a}-\uline{a}\right)$,
            where $\overline{a} = \max _i (\vx^*)^{\top} \mathbf{a}_i$ and $\uline{a} = \min_i (\vx^*)^{\top} \mathbf{a}_i$.}
			%In the numerical settings, 
            {For all the compared methods, we initialize %the iterations at the starting point 
            \mbox{$\vx^{(0)}=\vx^*$.}}
            For this problem, we   have {$l_f = \frac{1}{n_{\mathrm{p}}} \sum_{i=1}^{n_{\mathrm{p}}}\|\va_i^{\mathrm{p}}\| + \frac{1}{n_{\mathrm{u}}} \sum_{i=1}^{4n_{\mathrm{u}}}\|\va_i^{\mathrm{u}}\|$, \mbox{$l_g=\frac{1}{n} \sum_{i=1}^{n }\|\va_i\| $}, 
            	$\rho_f= \frac{1}{n_{\mathrm{p}}} \sum_{i=1}^{4n_{\mathrm{p}}}\|\va_i^{\mathrm{p}}\|^2 + \frac{1}{n_{\mathrm{u}}} \sum_{i=1}^{4n_{\mathrm{u}}}\|\va_i^{\mathrm{u}}\|^2$, and  $\rho_g= 0$ from \cite{huang2023single}.} That is, $\Phi(\cdot)-\Phi^*-\kappa_1$ is convex{, and \cref{ass:problemsetup} is satisfied. Moreover, \cref{def:2} holds with parameters $B_g=1$, $\overline{\rho}=\frac{-\max_{i\in[m]} g_i(\vx^*)}{D^2}$,  and
            	$B=\frac{-\max_{i\in[m]} g_i(\vx^*)}{2}$.}
			%We set the initial point $\vx^{(0)}=\vx^*$ %to be the initialization point 
			%for all the compared methods.

			\textbf{Problem 2.} 
			The second problem is a classification problem with demographic parity~\cite{agarwal2018reductions} and the smoothly clipped absolute deviation (SCAD) regularization term~\cite{fan2001variable}. 
			It is formulated as  
			\begin{equation}
				\min_{\vx: \|\vx\|_{\infty}\leq D} \Phi(\vx) + \lb \|\varphi(\vx)\|_1, \,\, \text{\st}\,\, \Psi_0(\vx)\leq \kappa_2,
			\end{equation}
			where $\varphi:\RR^d\mapsto\RR^d$, $\Psi_0:\RR^d\mapsto\RR$,  $\Phi$ is the same as that in~\cref{eq:fair1}, $D>0$, $\lb > 0$, $\kappa_2>0$ is  a slackness parameter. {The functions $\Psi_0$ and $\varphi$ are defined as follows:}
			\begin{align*}
			&\small{\Psi_{0}(\vx) : = \left| \frac{1}{n_{\mathrm{p}}} \sum_{i=1}^{n_{\mathrm{p}}} \sigma\left(\mathbf{x}^{\top} \mathbf{a}_i^{\mathrm{p}}\right)-\frac{1}{n_{\mathrm{u}}} \sum_{i=1}^{n_{\mathrm{u}}} \sigma\left(\mathbf{x}^{\top} \mathbf{a}_i^{\mathrm{u}}\right)  \right|, \text{ and }}\\ &\small{[\varphi(\vx)]_i=\begin{cases}2\left|x_i\right|, & 0 \leq\left|x_i\right| \leq 1, \\ -x_i^2+4\left|x_i\right|+1, & 1<\left|x_i\right| \leq 2, \\ 3, & \left|x_i\right|>2,\end{cases}
            \quad\text{ for }i=1,2,\ldots,d},
			\end{align*} {where $\varphi$ promotes sparsity in $\vx$.} %for all $i=1,2,\ldots,d$. 
            Here, $\Psi_{0}$ is a continuous relaxation of the demographic parity $ \left| \frac{1}{n_{\mathrm{p}}} \sum_{i=1}^{n_{\mathrm{p}}} \mathbb{I}\left(\mathbf{x}^{\top} \mathbf{a}_i^{\mathrm{p}}\geq 0\right)-\frac{1}{n_{\mathrm{u}}} \sum_{i=1}^{n_{\mathrm{u}}} \mathbb{I}\left(\mathbf{x}^{\top} \mathbf{a}_i^{\mathrm{u}}\geq 0\right)  \right|$.
			For this problem, {we discuss in \cref{appen:2} that its constraint satisfies \cref{def:2}, and from \cite{huang2023single}, we obtain $\rho_f= \rho_g= \max\big\{2\lb, \frac{1}{n_{\mathrm{p}}} \sum_{i=1}^{4n_{\mathrm{p}}}\|\va_i^{\mathrm{p}}\|^2 + \frac{1}{n_{\mathrm{u}}} \sum_{i=1}^{4n_{\mathrm{u}}}\|\va_i^{\mathrm{u}}\|^2\big\}$, and the Lipschitz modulus $l_g = \frac{1}{n_{\mathrm{p}}} \sum_{i=1}^{n_{\mathrm{p}}}\|\va_i^{\mathrm{p}}\| + \frac{1}{n_{\mathrm{u}}} \sum_{i=1}^{4n_{\mathrm{u}}}\|\va_i^{\mathrm{u}}\|$, $l_f = 5\lb +  \frac{1}{n } \sum_{i=1}^{n }\|\va_i \| $.}
			In the experiments, we set $D=5$, $\lb=0.02$, $\kappa_2=0.02$, %For this problem, 
			and %we set 
            $\vx^{(0)}=\vzero$ %to be the initialization point 
			for all the compared methods.
			
			%It should be mentioned here Problem 1 and Problem 2 satisfy~\Cref{def:2}, as argued in~\cite{huang2023single}. Thus by the results in \Cref{appen:1},~\Cref{ass:cq}(ii) are satisfied in Problem 1 and Problem 2.
			
			\textbf{Problem 3.} 
			{The third tested problem is a multi-class Neyman-Pearson classification problem formulated in \cref{eq:example1}.  We set $D=0.3$, $\kappa=4.5$, and let $\phi$ be the sigmoid function $\sigma$. Similar to those constants of Problem 2, we have 
			$l_f =  l_g= \max_{i\in[M]}\frac{1}{|\cD_i|} \sum_{\va_i\in\cD_i}\|\va_i\| $, and
			$\rho_f= \rho_g=    \max_{i\in[M]}\frac{1}{|\cD_i|} \sum_{\va_i\in\cD_i}\|\va_i\|^2$. { As discussed in \cref{appen:2}, the constraint of the tested problem also satisfies \cref{def:2}.} }
			
			\textbf{Datasets.} 
			%In this section, we evaluate the numerical performance 
			The experiments of Problems 1--2 are conducted on two datasets: \verb|a9a|~\cite{kohavi1996scaling}  
			and \verb|COMPAS|~\cite{angwin2022machine}. 
			The \verb|a9a| dataset contains 48,842 data points with 123 features {and is} used to predict income levels; gender is the protected attribute, and fairness is assessed between males and females.  
			The \verb|COMPAS| dataset includes 6,172 data points and 16 features, focused on predicting recidivism risk, with fairness evaluated between Caucasian and non-Caucasian groups. We split each dataset into two subsets with a ratio of 2:1. The larger set is used as $\mathcal{D}=\left\{\left(\mathbf{a}_i, b_i\right)\right\}_{i=1}^{n}$, and {the smaller subgroup is further partitioned into two subsets based on a binary protected attribute:} $\mathcal{D}_{\mathrm{p}}=\left\{\left(\mathbf{a}^{\mathrm{p}}_i, b^{\mathrm{p}}_i\right)\right\}_{i=1}^{n_{\mathrm{p}}}$ and  $\mathcal{D}_{\mathrm{u}}=\left\{\left(\mathbf{a}^{\mathrm{u}}_i, b^{\mathrm{u}}_i\right)\right\}_{i=1}^{n_{\mathrm{u}}}$. 
						The experiments for Problem 3 are conducted using the \verb|MNIST| dataset~\cite{lecun1998mnist}, which contains 60,000 grayscale images of handwritten digits (0–9), each image having 
			28$\times$28 pixels and an associated ground-truth label.
			We set $M=10$ and define each subset $\cD_i$ as the collection of images corresponding to the image labeled by digit $\mathrm{mod}(i,10)$, for $i\in[M]$.
			
			\subsection{Implementation details}\label{sec:imple}
			For our method, we use $\beta =10$ and $\nu = 10^{-5}$ as the default setting.
			We implement both deterministic and stochastic versions of our method (denoted as 3S-Econ-D and 3S-Econ-S, respectively).
			For 3S-Econ-D, we set $q=1$ and $S_1=S_2=n$ (i.e., taking the whole data)  
			and {select} $\alpha_k=10^{-2}$  as our default setting.
			For 3S-Econ-S, we set $S_1=n$ and $S_2=q=\lceil\sqrt{n}\rceil$   
			and %use 
            {select} $\alpha_k = \frac{1}{100\lceil \sqrt{k/q}\rceil}$ as our default setting\footnote{To determine the default settings of the stepsize, we test 3S-Econ-D and 3S-Econ-S on Problem~1 using the a9a dataset, {with varying choices of  \( \alpha_k \). Specifically, for all $k\ge 0$, we choose  \( \alpha_k\) from  $\{0.001, 0.01, \frac{1}{100\lceil \sqrt{k/q}\rceil}, \frac{1}{1000\lceil \sqrt{k/q}\rceil} \} $}. Through our experiments, we observe that 3S-Econ-D achieves the best performance with  \(\alpha_k = 0.01, \forall\, k\). Similarly, for 3S-Econ-S,   \(\alpha_k = \frac{1}{100\lceil \sqrt{k/q}\rceil}, \forall\, k,\) yields the best performance. Based on these observations, we fix these parameters as the default settings and apply them across all datasets and tested problems.
			}.
			
			We adopt the tuning techniques in~\cite{boob2023stochastic,ma2020quadratically} for the two distinct  IPP methods and the techniques in~\cite{huang2023single} for the SSG. %as introduced in their papers. 
			A strongly convex constrained subproblem is solved approximately in each outer iteration of the IPPs.  For this subproblem, we employ the SSG from~\cite{huang2023single} and the ConEx in~\cite{boob2023stochastic} as inner solvers. %as they offer the best theoretical complexity guarantees. 
			We refer to the two implemented IPPs as IPP-SSG and IPP-ConEx, respectively.
			%For the stepsize $\alpha_k$ in SSG, we select from the set $\{2* 10^{-4}, 5* 10^{-4}, 10^{-3}, 5* 10^{-3}\}$. The same selection approach is applied to 
			The stepsize~$\alpha_k$ for IPP-SSG is selected from $\{2\times 10^{-4}, 5\times 10^{-4}, 10^{-3}, 5\times 10^{-3}\}$. Following the notation in~\cite{ma2020quadratically},  we set $\theta_t = \frac{t}{t+1}, \eta_t = c_1(t+1)$, and $\tau_t = \frac{c_2}{t+1}$ for IPP-ConEx, where $c_1$ is chosen from $\{20, 50, 100, 200\}$ and $c_2$ from $\{0.002, 0.005, 0.01, 0.02\}$. The optimal parameter set  for these methods is selected based on the smallest objective value after 5000 iterations.

			We report %the  following three measurements  of problem~\cref{eq:model}, including 
			the objective function value (FV), i.e., $f(\vx)$, the constraint violation (CVio), i.e., { $\|[\vg(\vx)]_+\|_1$}, %$h(\vg(\vx))$, 
			and the stationarity violation (SVio). %The function values and constraint violations are recorded at each iteration. 
			%The FV is evaluated by $f(\vx)$, and the CVio is measured by $[\vg(\vx)]_+$. 
			%To measure the SVio 
			%At an iterate $\vx^{(k)}$, 
            {At each iteration,} we solve %a strongly convex problem 
			$$
			\min_{ \vx } \big\{f(\vx) + \rho_f \|\vx -\vx^{(k)}\|^2, \,\text{\st}\,g_i(\vx) + \rho_g \|\vx -\vx^{(k)}\|^2\leq 0, \, i\in[m]\big\}
			$$
			to a desired accuracy to obtain $\widehat{\vx}^{(k)}$ and then use %we set 
			$\|\widehat{\vx}^{(k)}-{\vx}^{(k)}\|$ to measure SVio%as a measurement of stationarity violation
			\footnote{Specifically, we solve the strongly convex problem by the SSG~\cite{huang2023single} to a tolerance of $10^{-2}$.%obtain a solution $\widehat{\vx}^{(k)}$. It is worth noting here that the change of $\|\widehat{\vx}^{(k)}-{\vx}^{(k)}\|$ is less than $1\%$ after 2500 iterations. \yx{For each $k$ or do you mean the tolerance for solving the strongly convex problem?}{the tolerance for solving the strongly convex problem}
			}. 
		
		{	
			\subsection{Performance of \mbox{3S-Econ-D} with different $\beta$ and $\nu$}\label{sec:algbeta} 
			In this subsection, we investigate the numerical performance of \mbox{3S-Econ-D} with different values %parameter choices 
			of \(\beta\) and~\(\nu\) on solving Problem~1 by using the \texttt{a9a} dataset, respectively.  We vary the penalty parameter $\beta \in \{1,10,100,1000\}$ with fixed $\nu = 10^{-5}$, and subsequently vary the smoothing parameter $\nu \in \{10^{-5}, 10^{-4}, 10^{-3}, 10^{-2}\}$ with fixed $\beta=10$.  The results are depicted in~\cref{fig:betanu}.  From the figure, we observe that 		 
	when \(\beta=1\), the CVio remains relatively large, indicating inadequate penalization,  
			while excessively large \(\beta\)  
			slows down convergence. Among the varying values, $\beta=10$ yields the most balanced trade-off between convergence speed and constraint satisfaction. 
			In addition, we observe that the  
			CVio closely aligns with  
			\(\nu\), consistent with our theoretical results.  
			On all subsequent experiments, we select $\beta=10$ and \(\nu=10^{-5}\) as our default settings.

\begin{figure} 
						\vspace{-3mm} 
				\begin{center}
				\begin{tabular}{ccc}
					\includegraphics[width=0.31\textwidth]{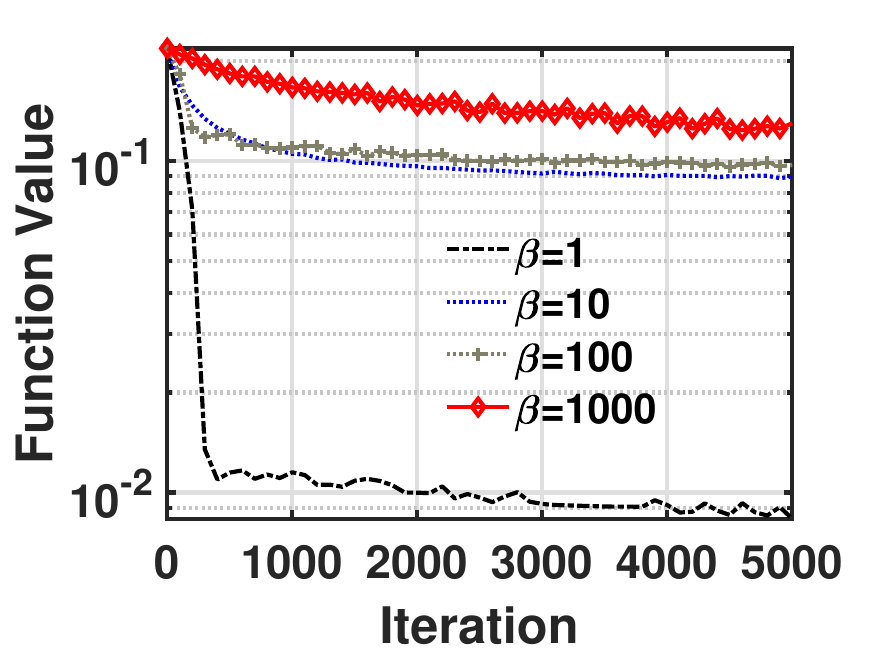} &
					\includegraphics[width=0.31\textwidth]{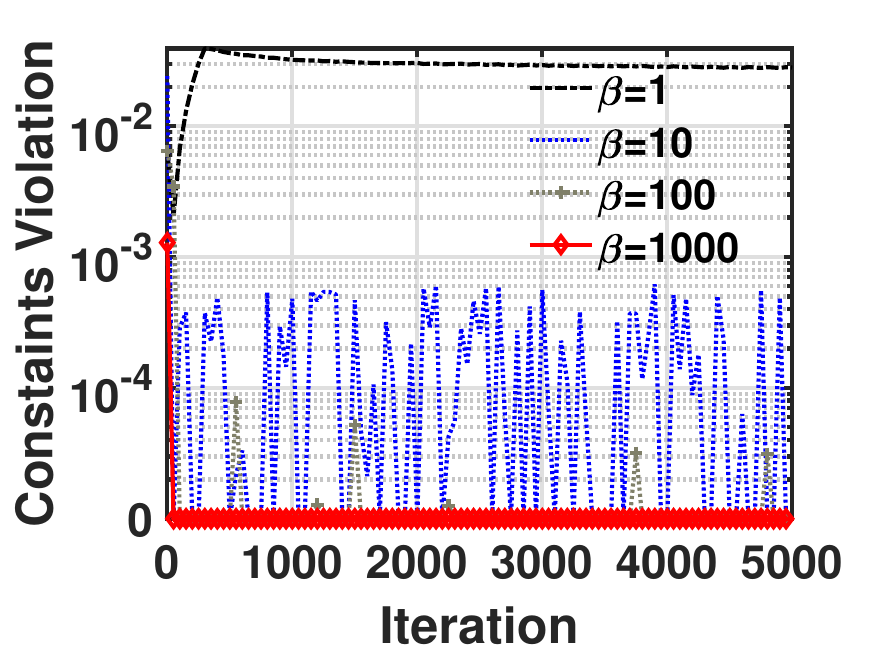}&
					\includegraphics[width=0.31\textwidth]{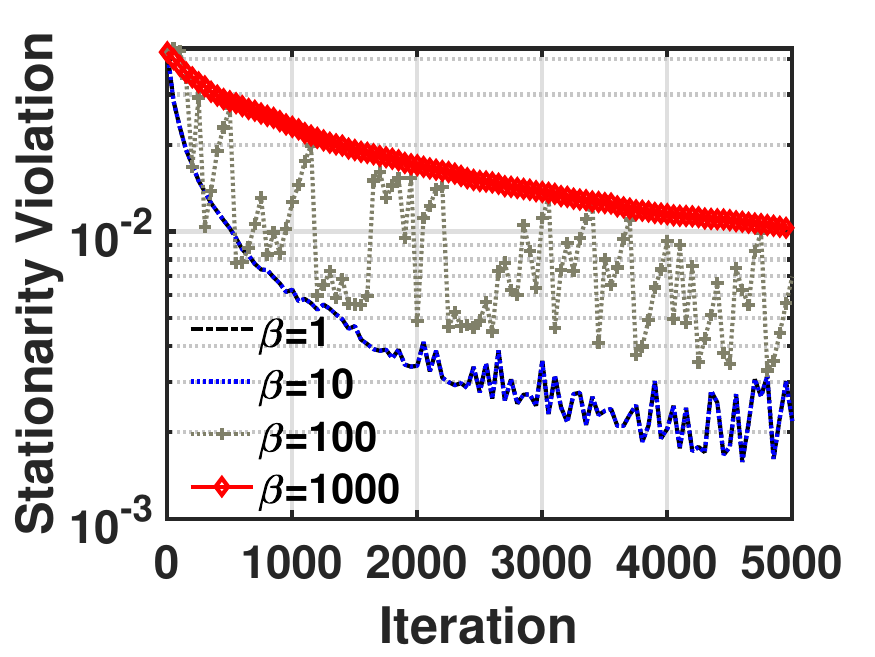} \\
					\includegraphics[width=0.31\textwidth]{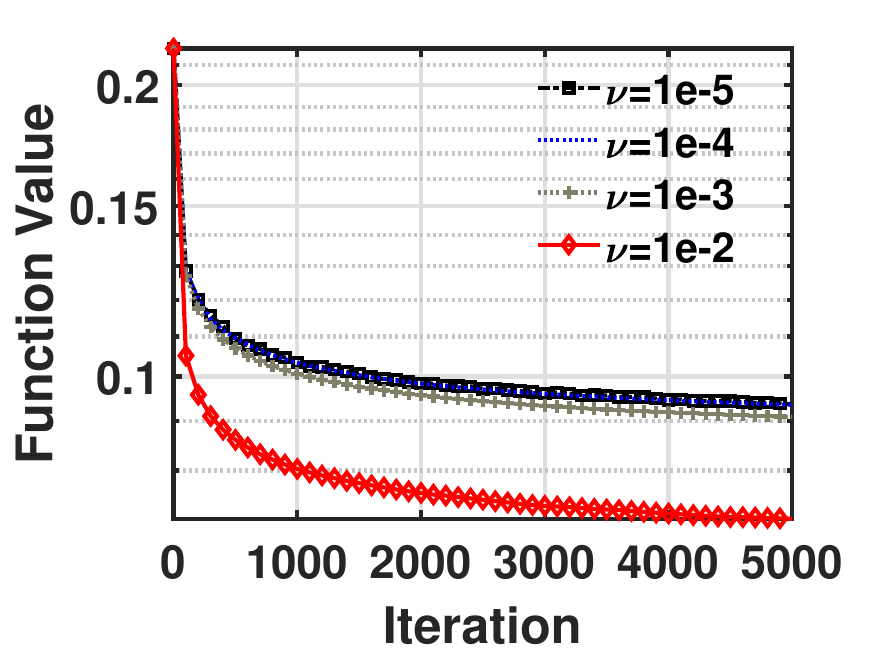} &
					\includegraphics[width=0.31\textwidth]{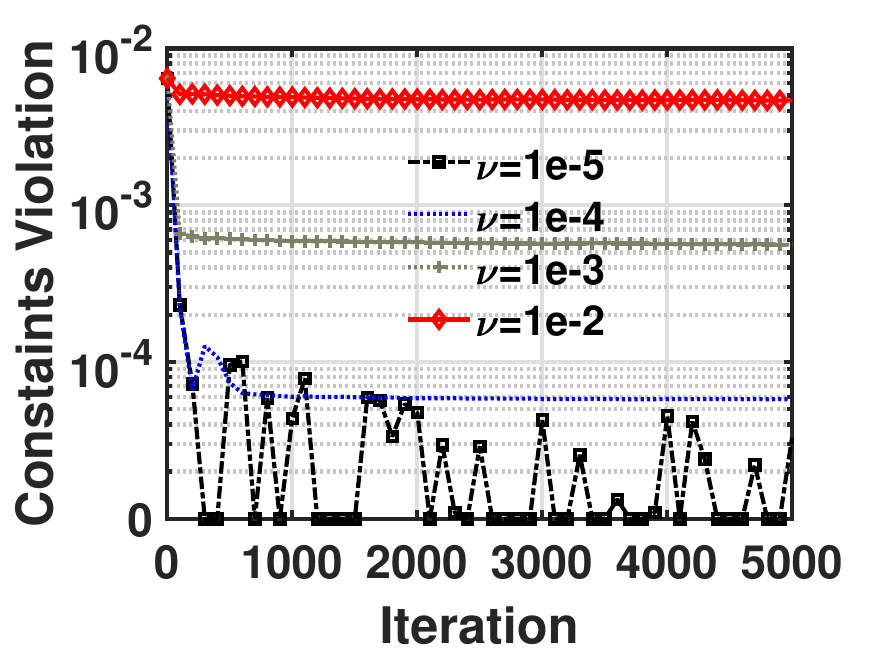} &
					\includegraphics[width=0.31\textwidth]{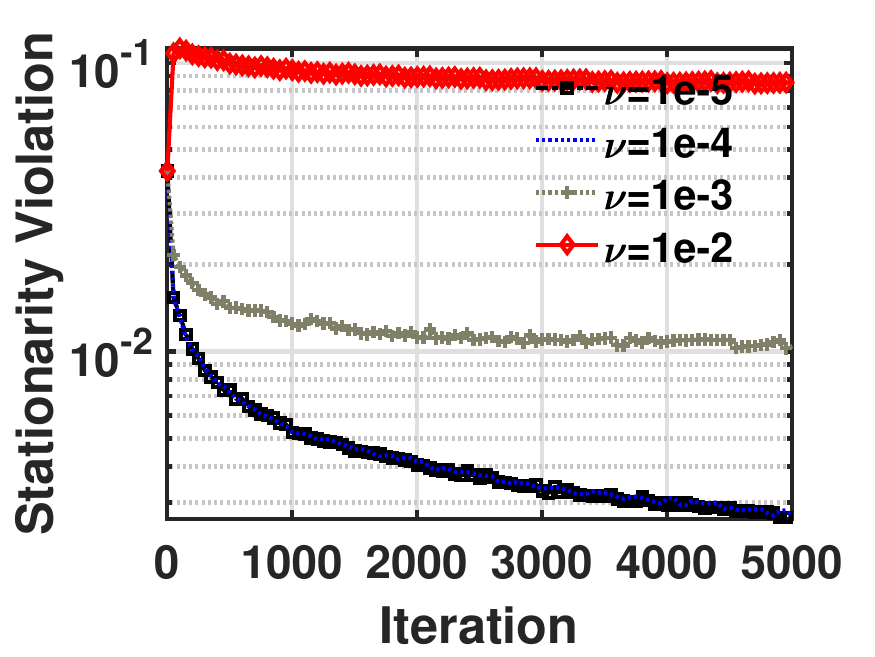}
					\end{tabular}
				\end{center}
				\caption{Performance of \mbox{3S-Econ-D} with different $\beta$ and $\nu$ on solving Problem 1 by using the \texttt{a9a} dataset. Top row: different values of $\beta$ and fixed $\nu = 10^{-5}$; Bottom row: different values of $\nu$ and fixed $\beta=10$.}\label{fig:betanu}
				\vspace{-5mm}
			\end{figure}
}
			
			\subsection{Comparisons among different methods}
			In~\cref{fig:all_deter_pro2}, we compare our \mbox{3S-Econ-D} method with three state-of-the-art deterministic approaches on solving Problems~1--2 {over iterations\footnote{Here, for the two IPP methods, the iteration count refers specifically to the inner-loop iterations.}}.  
			It is clear that  3S-Econ-D  
			outperforms the others in FVs and SVio values, not only converging faster but also achieving lower errors. In terms of CVio,  
			3S-Econ-D shows more fluctuations. Nevertheless, we observe that it remains feasible for most of the iterations. Since for the deterministic version, we can explicitly check the feasibility. By keeping feasible iterates, 3S-Econ-D will  significantly outperform the other three deterministic methods.  
			\begin{figure} 
				\begin{center}
					\subfloat[Problem 1, a9a]{\includegraphics[width=44mm]{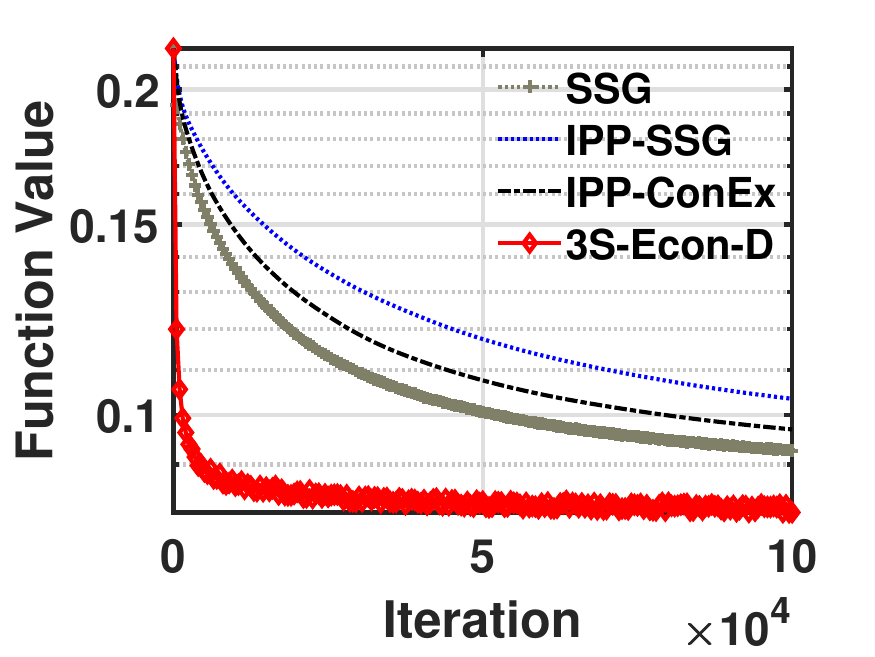}}
					\subfloat[Problem 1, a9a]{\includegraphics[width=44mm]{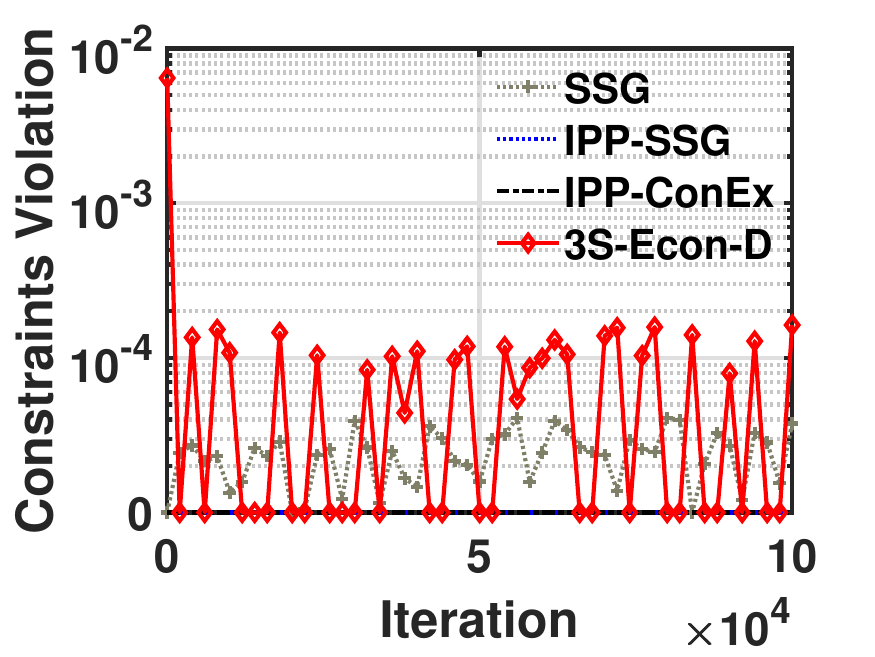}}
					\subfloat[Problem 1, a9a]{\includegraphics[width=44mm]{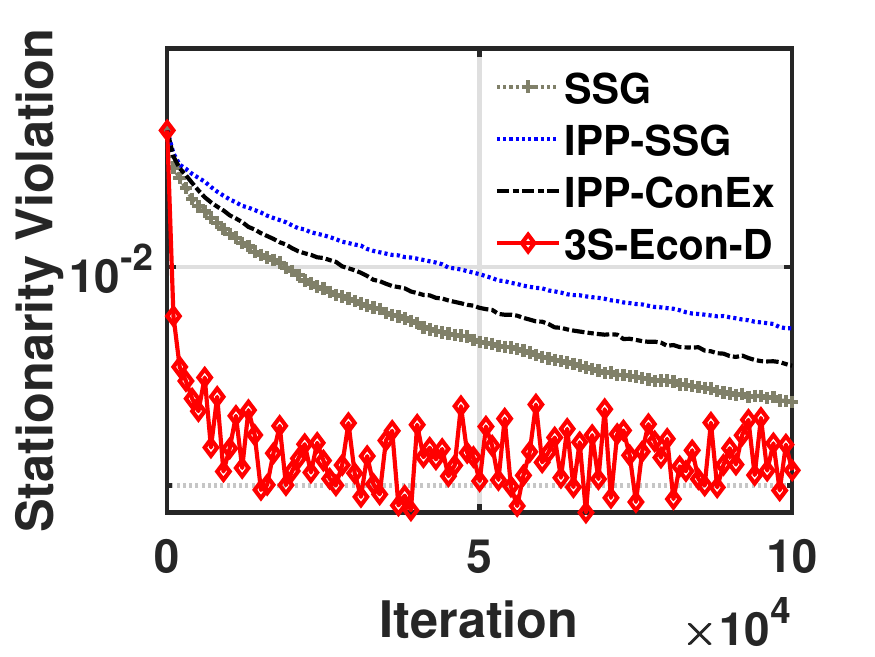}}
					\vspace{-3mm}
					\\
					%\subfloat{\includegraphics[width=44mm]{}}
					\subfloat[Problem 1, COMPAS]{\includegraphics[width=44mm]{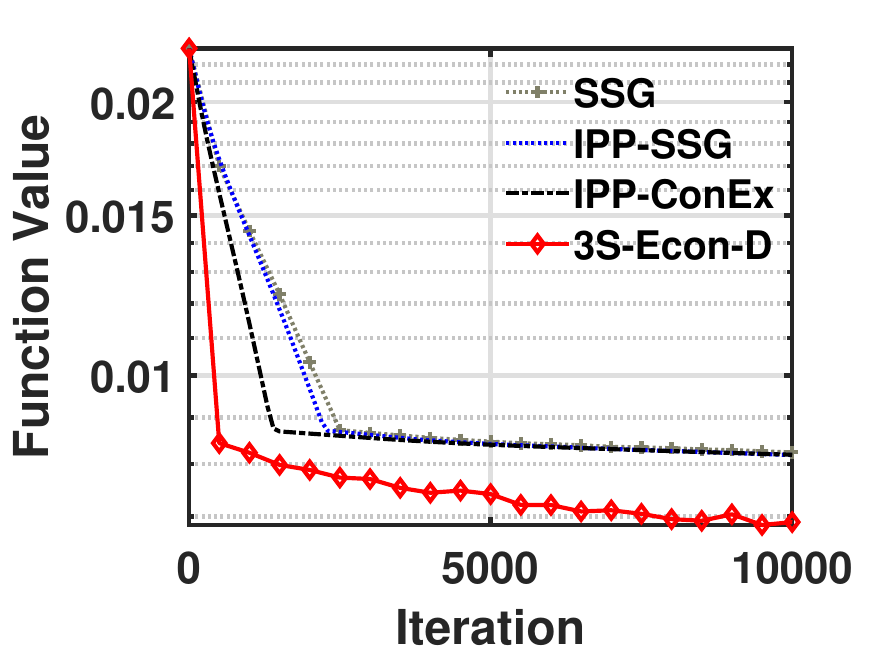}}
					%\subfloat{\includegraphics[width=44mm]{}}
					\subfloat[Problem 1, COMPAS]{\includegraphics[width=44mm]{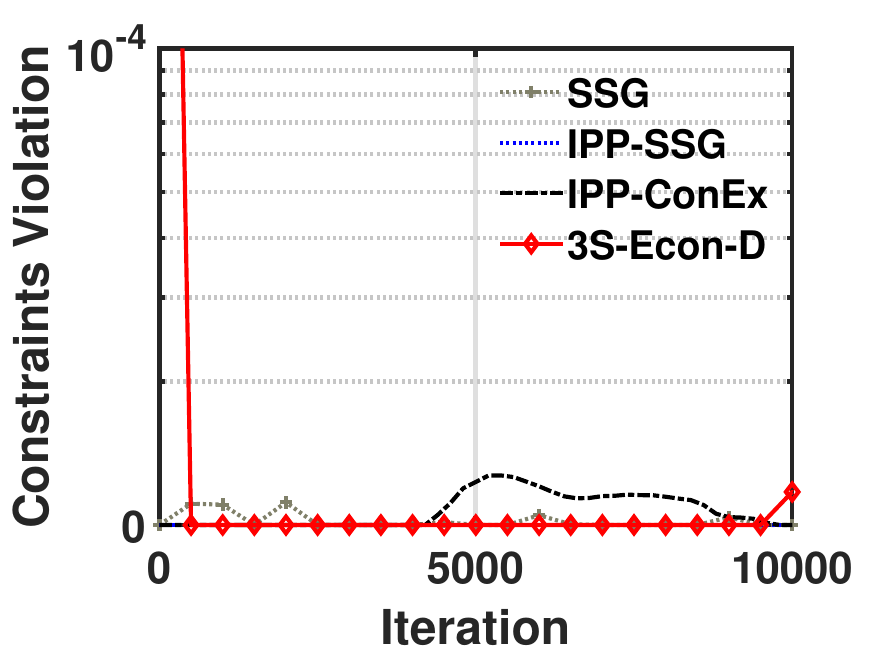}}
					%\subfloat{\includegraphics[width=44mm]{./pic/beta13}}
					%\subfloat[bank]{\includegraphics[width=44mm]{}}
					\subfloat[Problem 1, COMPAS]{\includegraphics[width=44mm]{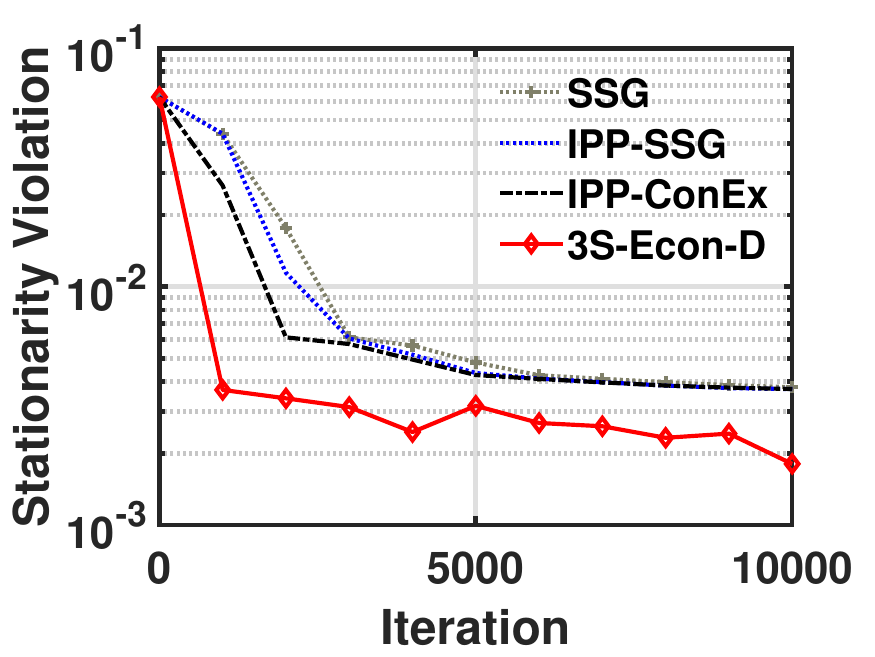}}
					\vspace{-3mm}
					\\         
					%\subfloat[bank]{\includegraphics[width=44mm]{}}
					%\subfloat[COMPAS]{\includegraphics[width=44mm]{./fig/all_deter_sv_a9a}}
					% 	\end{center}
				% 	\caption{Comparisons between 3S-Econ-D and  other approaches on Problem 1}\label{fig:all_deter_pro1}
				% 		\vspace{-5mm}
				% \end{figure}
			
			% 	\begin{figure} 
				% 	\begin{center}
					\subfloat[Problem 2, a9a]{\includegraphics[width=44mm]{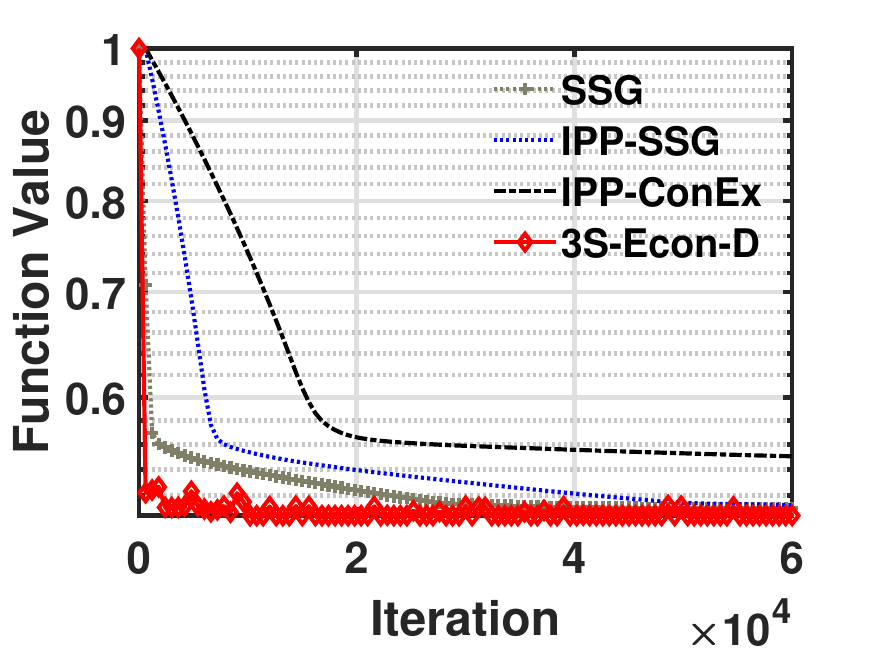}}
					%\subfloat{\includegraphics[width=44mm]{}}
					\subfloat[Problem 2, a9a]{\includegraphics[width=44mm]{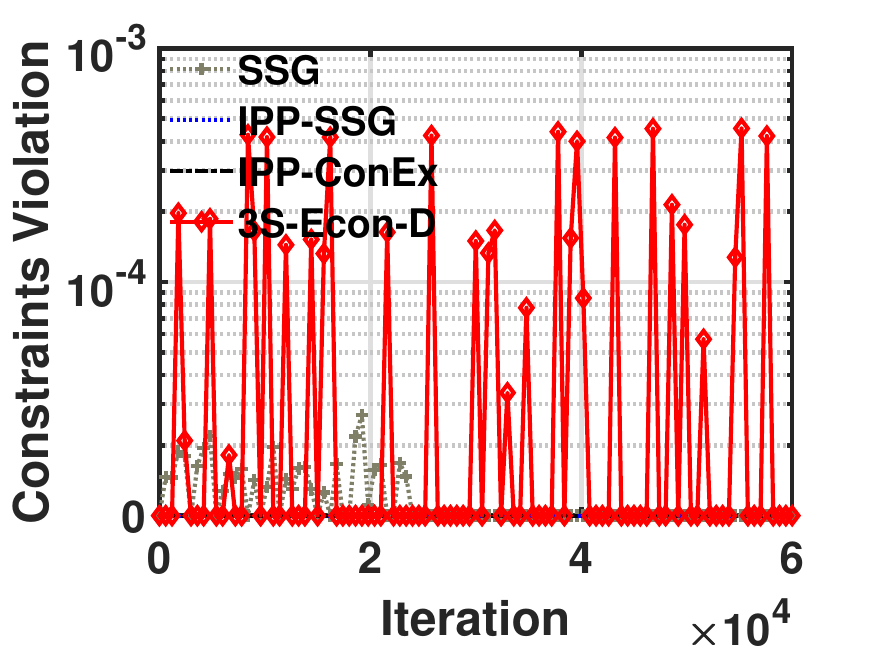}}
					\subfloat[Problem 2, a9a]{\includegraphics[width=44mm]{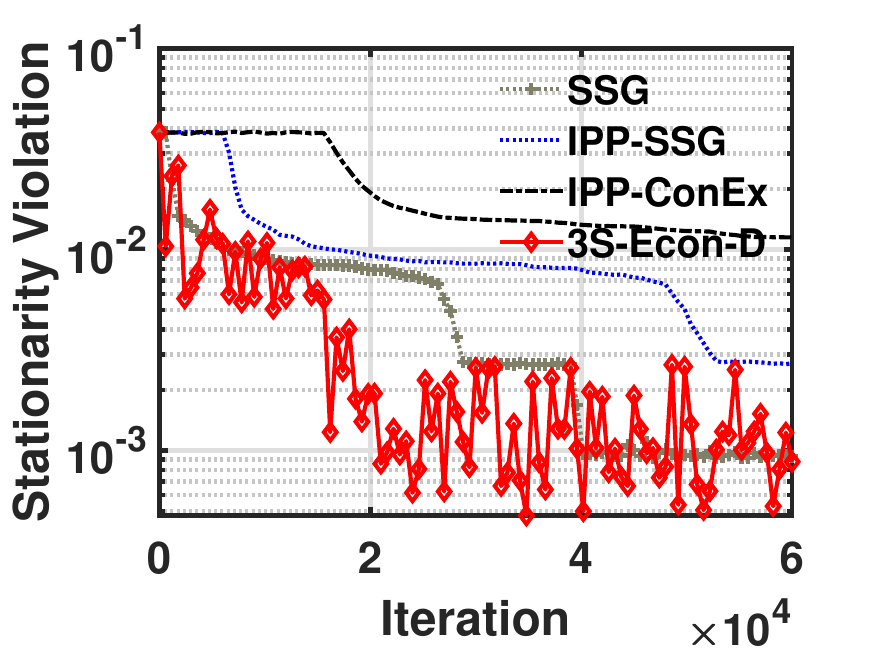}}
					\vspace{-3mm}
					\\
					\subfloat[Problem 2, COMPAS]{\includegraphics[width=44mm]{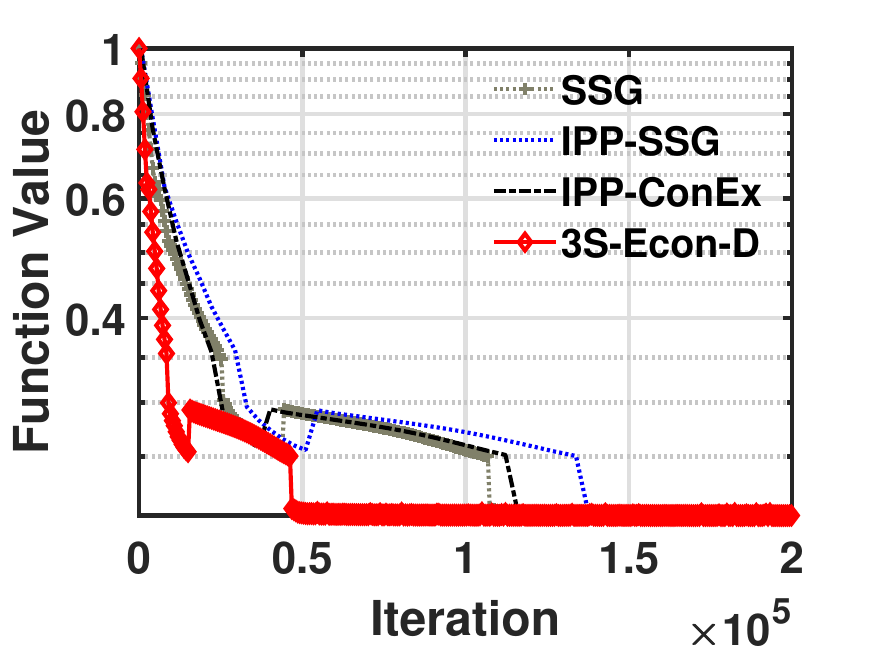}}
					%\subfloat{\includegraphics[width=44mm]{}}
					\subfloat[Problem 2, COMPAS]{\includegraphics[width=44mm]{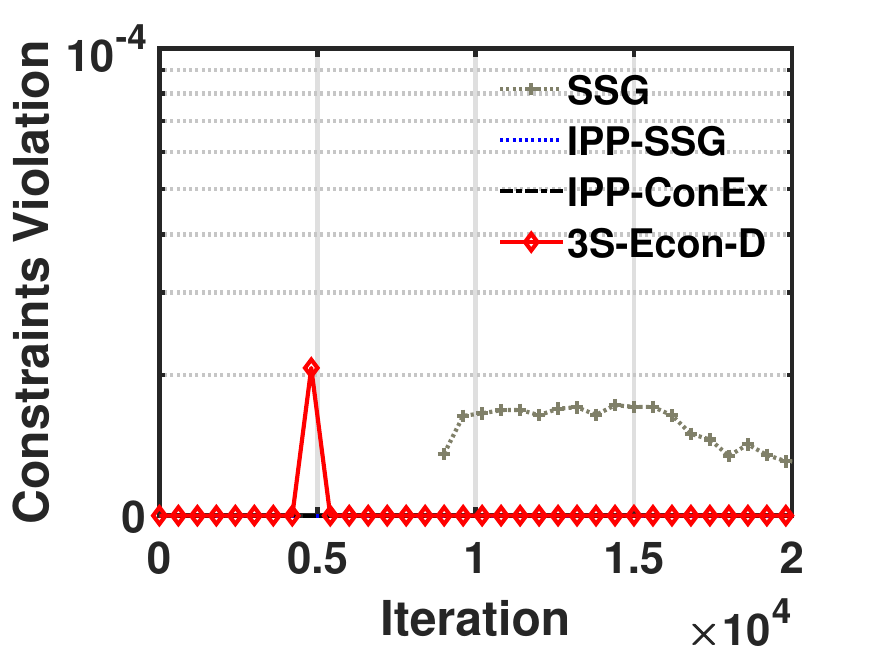}}
					%\subfloat{\includegraphics[width=44mm]{./pic/beta13}}
					%\subfloat[bank]{\includegraphics[width=44mm]{}}
					\subfloat[Problem 2, COMPAS]{\includegraphics[width=44mm]{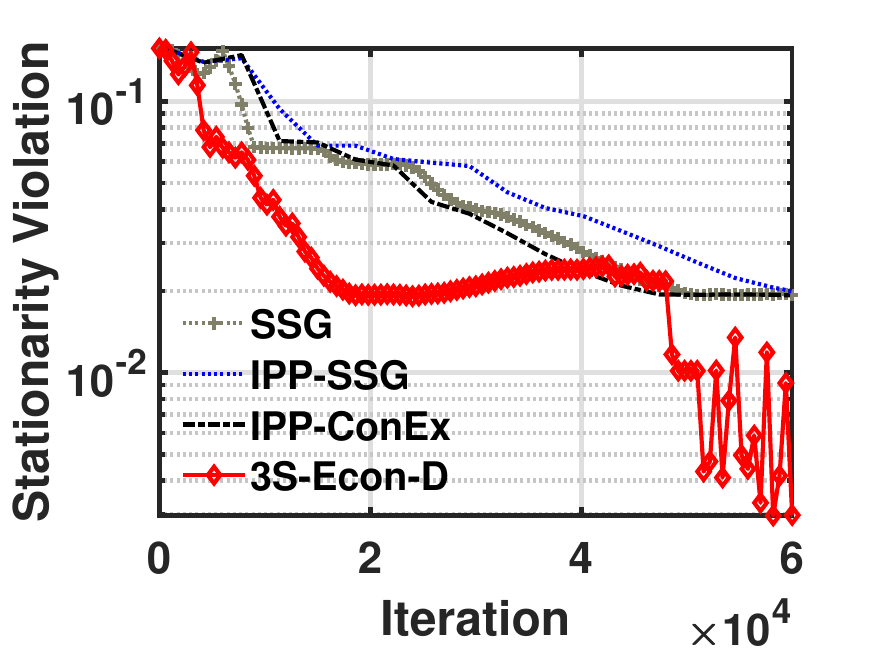}}
					\vspace{-3mm}
				\end{center}
				\caption{Comparisons between 3S-Econ-D (deterministic version of our method) and  other approaches: SSG~\cite{huang2023single}, 
					IPP-SSG~\cite{ma2020quadratically}, and IPP-ConEx~\cite{boob2023stochastic}.}\label{fig:all_deter_pro2}
			\end{figure}

			%\subsection{Comparisons among all the methods}
			%Notice that  3S-Econ-D outperforms all the other methods in the previous subsection. 
			To evaluate the numerical behavior of the stochastic version of our method (denoted as 3S-Econ-S), we first compare it with 3S-Econ-D.
			The FV and CVio values on Problems 1-2 with two datasets are shown in~\cref{fig:all_sto_pro1}, %In this figure,  
			``data passes'' for the \(x\)-axis label is a shorthand for ``data passes ($\vg$)'' (DP ($\vg$)), %which 
			referring to the number of times to %the algorithms
			access all the data involved in the constraint function. %during iterations to evaluate a constraint stochastic subgradient and the constraint function value.
			Despite more oscillations due to the stochasticity, %we observe that 
			3S-Econ-S can produce comparable results to %rapidly matches the FV of 
			3S-Econ-D with significantly fewer data passes. %, while maintaining low  CVio.
			\begin{figure}[h] 
				\begin{center}
					\subfloat{\includegraphics[width=33mm]{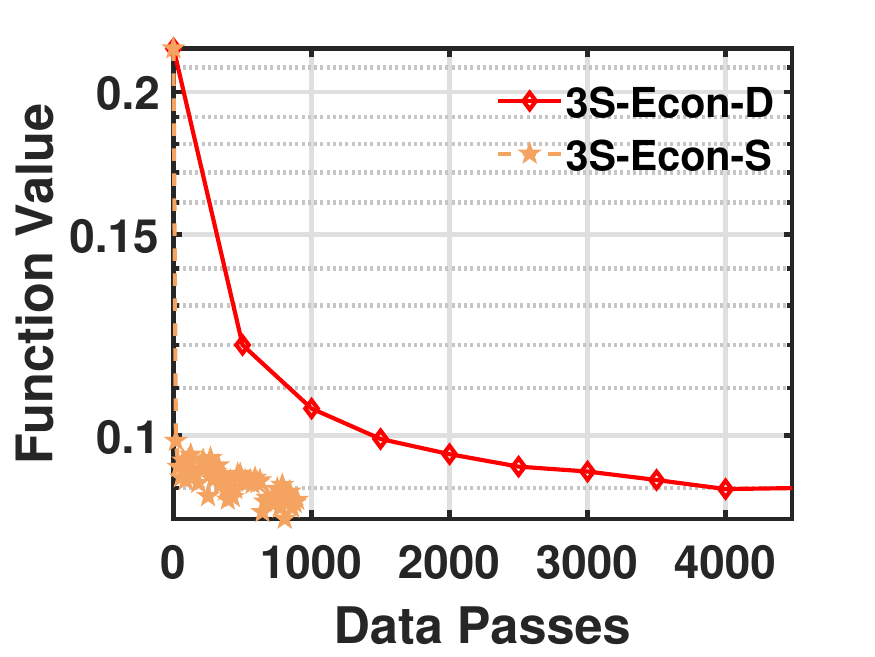}}
					%\subfloat[Problem 1, bank]{\includegraphics[width=44mm]{}}
					\subfloat{
						\includegraphics[width=33mm]{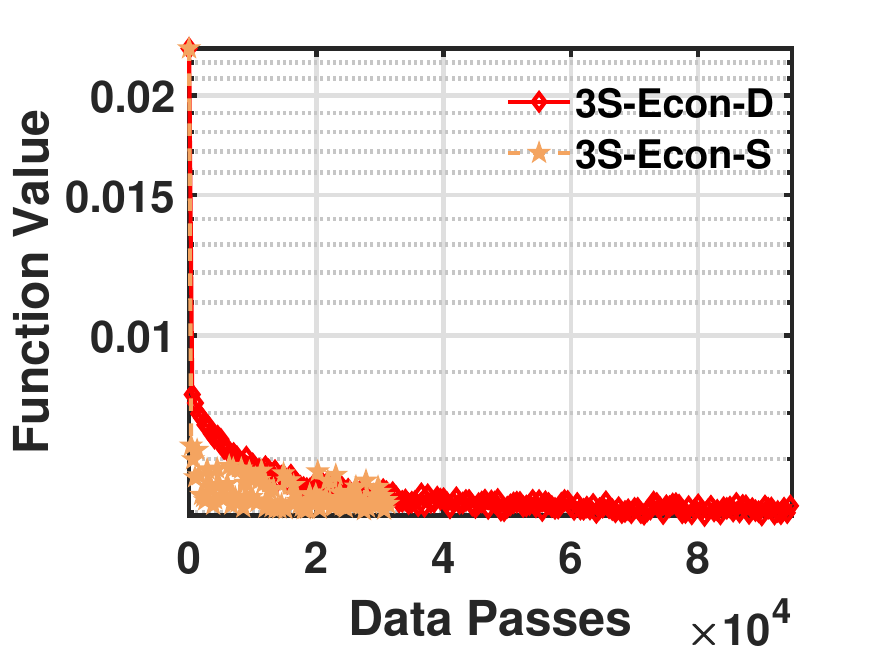}}
					\subfloat{\includegraphics[width=33mm]{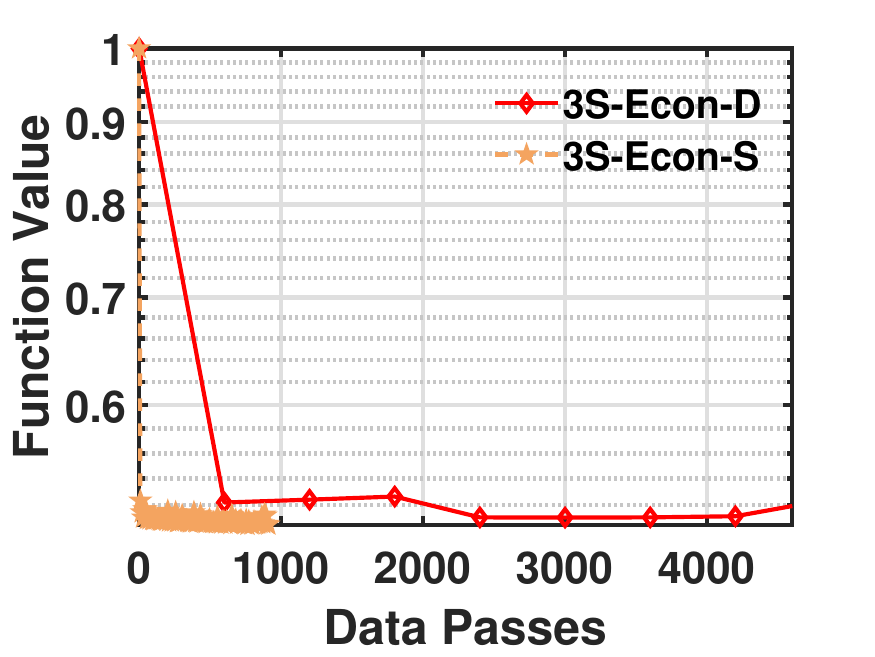}}
					%\subfloat[Problem 2, bank]{\includegraphics[width=44mm]{}}
					\subfloat{\includegraphics[width=33mm]{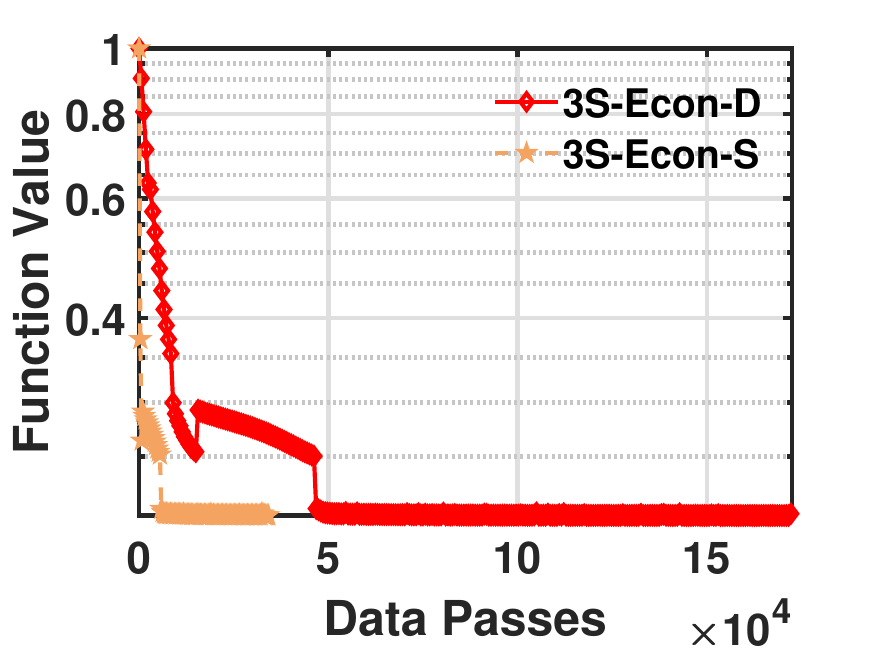}}
					\vspace{-3mm}
					\\
					\setcounter{subfigure}{0}
					\subfloat[Prob. 1, a9a]{\includegraphics[width=33mm]{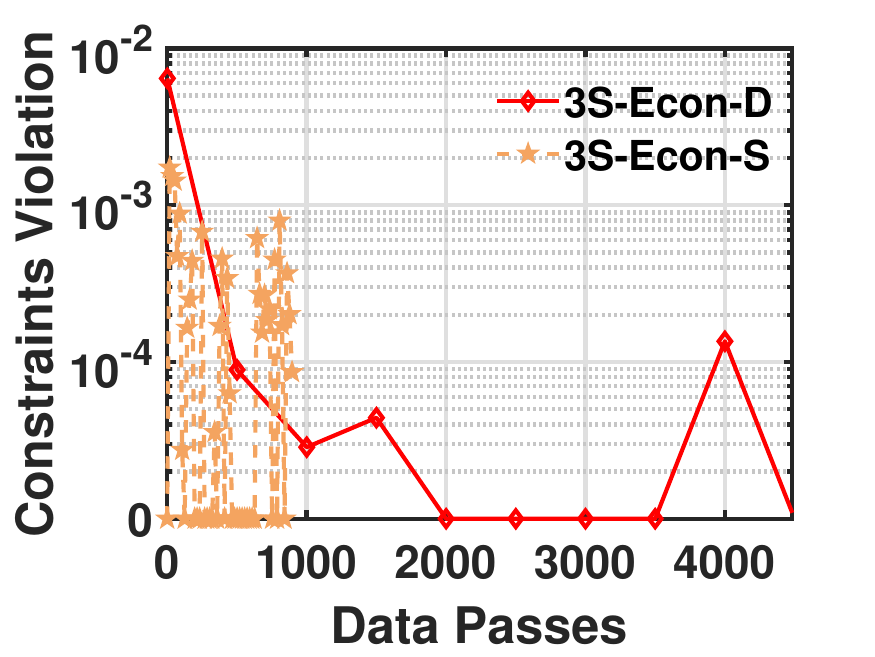}}
					\subfloat[Prob. 1, COMPAS]{
						\includegraphics[width=33mm]{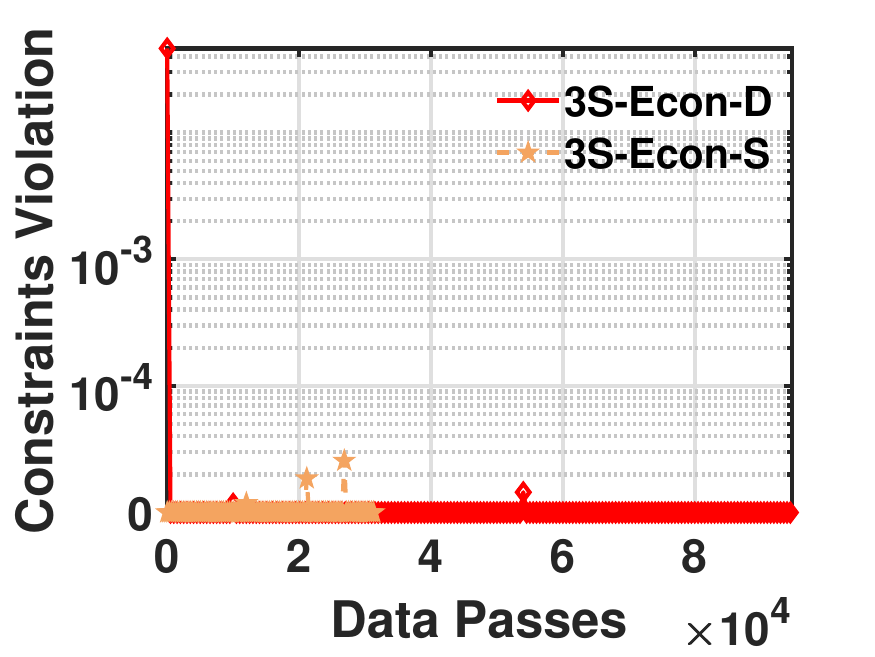}}
					\subfloat[Prob. 2, a9a]{\includegraphics[width=33mm]{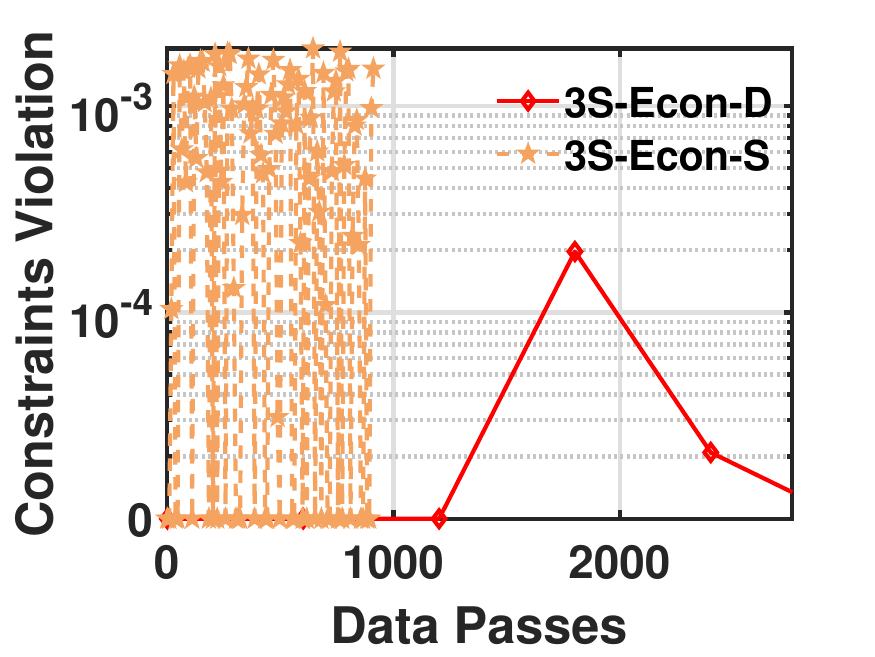}}
					\subfloat[Prob. 2, COMPAS]{\includegraphics[width=33mm]{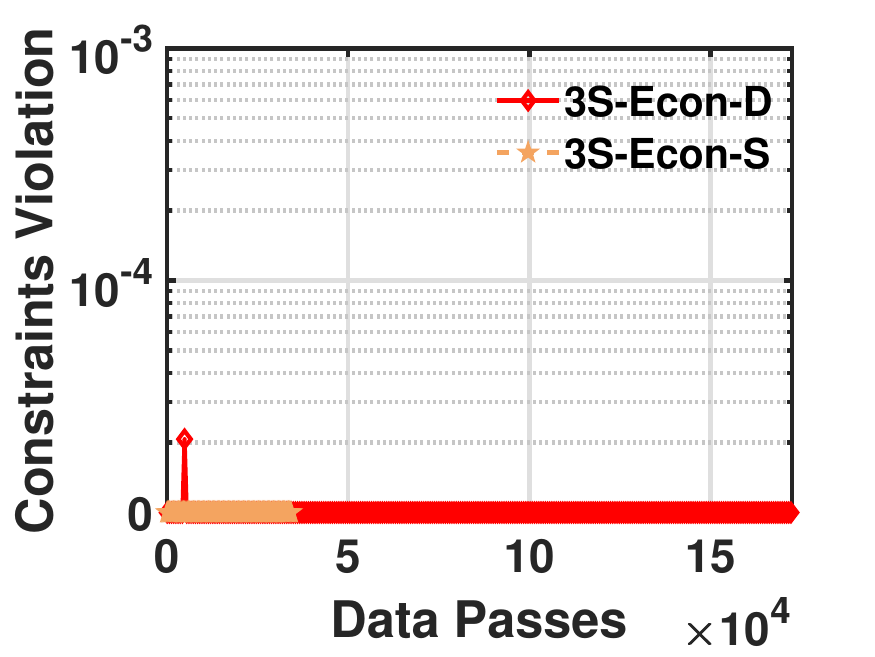}}
				\end{center}
				\vspace{-3mm}
				\caption{Comparisons between the deterministic and stochastic versions of our method}\label{fig:all_sto_pro1}
			\end{figure}
			
			%Next, 
			In~\cref{tab:2}, we show more comparison results of 3S-Econ-D and 3S-Econ-S %with all other algorithms at termination, including 3S-Econ-S, 
			with IPP-SSG, IPP-ConEx, SSG, and the stochastic version of SSG (denoted as \mbox{SSG-S})\footnote{It is worth noting that the stochastic versions of IPP-SSG and IPP-ConEx produce numerical results inferior to their deterministic counterparts when solving Problems 1--2. Thus, we only present results for their deterministic versions.
				This observation can be explained by the fact that, although {their stochastic variants} %the stochastic versions of IPP-SSG and IPP-ConEx 
                utilize the stochastic subgradient of \(f\) rather than the full subgradient, both methods still require accessing all data in the constraints during each iteration to access a constraint stochastic subgradient and the constraint {deterministic} function value. %This results in a significant amount of data usage per iteration, diminishing the efficiency typically expected from stochastic methods.
			}.
			%Specifically, 
			We terminate all the deterministic methods if SVio is less than \(10^{-3}\), while for the stochastic algorithms, the threshold is set to \(5\times 10^{-3} \).
			In addition, we impose a maximum ``DP ($\vg$)'' of 200,000 for Problem~1 and 720,000 for Problem 2. 
			%The numerical results are summarized in the~\cref{tab:2}.
			Since computing~\(\widehat{\vx}^{(k)}\) with high precision at each iteration $k$ is time-consuming, we calculate SVio every few steps. 
			Upon termination, our proposed methods demonstrate superior performance in terms of both FV and SVio compared to other tested algorithms in most cases. In terms of ``DP ($\vg$)'', which consistently dominates ``DP ($f$)'' for all methods, we observe that for Problem~1, 3S-Econ-D achieves speedups ranging from 1.5 to 5 times compared to other deterministic algorithms, while 3S-Econ-S delivers even more impressive speedups, approximately 108 to 466 times faster.
			When considering the metric ``(CPU)  Time (s)'', both 3S-Econ-D and 3S-Econ-S remain significantly faster than their counterparts. Although the acceleration factor of 3S-Econ-S in terms of runtime is less pronounced compared to that measured by \mbox{``DP ($\vg$)''}, this discrepancy primarily arises from implementation-related overhead inherent in MATLAB, such as  matrix-vector multiplications.

			%The figure highlights the advantages of the stochastic version of our method across two problems and three datasets. Despite some initial oscillations, the stochastic version quickly reaches the same function value level as the deterministic version, achieving comparable final results with far fewer data passes.
			%Moreover, considering that in Figures~\cref{fig:all_deter_pro1}~--~\cref{fig:all_deter_pro2}, the deterministic version of our  method consistently outperforms other state-of-the-art algorithms, this highlights the strength of our stochastic method, which reaches the same high-quality results but with faster convergence.

			\begin{table}\footnotesize
				\centering
				\begin{tabular}{|@{}c|c|c|c|c|c|c|c|c|}
					\hline
					&  & measures & SSG  & 
					IPP-SSG &
					IPP-ConEx &
					3S-Econ-D&
					SSG-S 
					&
					3S-Econ-S  \\
					\hline
					\multirow{12}{*}{\rotatebox{90}{Problem 1}} &  \multirow{6}{*}{\rotatebox{90}{a9a}}  & iteration & 1.00e+05 & 1.00e+05 & 1.00e+05 & 1.50e+04 & 1.99e+05 & 8.24e+04
					\\
					& &DP ($f$)  & 1.00e+05 & 1.00e+05 & 1.00e+05 & 1.50e+04  & 1.10e+03 & 455 
					\\
					& &DP ($\vg$)  & 2.00e+05 & 2.00e+05 & 2.00e+05 & 3.00e+04  & 2.00e+05 & 910 \\
					& &{Time (s)}  & 1.38e+03 & 1.37e+03 & 3.70e+03 & 582  & 1.03e+03 & 537 \\
					&	& FV& 9.26e-02 & 1.03e-01 & 9.70e-02 & 8.47e-02 &8.08e-02 &  8.48e-02 \\
					&	& CVio& 3.76e-05 & 6.57e-06 &  0 & 1.41e-04 &6.34e-04 & 0 \\
					&	& SVio& 2.41e-03 & 5.25e-03 & 3.60e-03 & 9.56e-04 & 1.36e-02 & 4.69e-03 \\
					\cline{2-9}
					%\multirow{5}{*}{bank} & iteration & 7.60e+04 & 6.11e+04 & \red{1.00e+05} & 2.00e+04 & & 7.30e+05\\
					%	&data passes& 1.52e+05 & 1.22e+05 & 2.00e+05 & 4.00e+04  && 5.25e+03 \\
					%	& FV& 3.43e-03 & 3.79e-03 & 3.43e-03 & 3.45e-03 & & 3.43e-03 \\
					%	& CVio& 6.73e-06 & 2.33e-06 & 1.18e-05 & 0  &  &0 \\
					%	& SVio& 9.72e-04 & 9.99e-04 & 1.98e-03 & 9.75e-04 & & 4.23e-03 \\
					%	\hline
					&  \multirow{6}{*}{\rotatebox{90}{COMPAS}}& iteration  & 1.00e+05 & 1.00e+05 & 1.00e+05 & 3.70e+04 & 1.96e+05 & 6.00e+04 \\
					& &DP ($f$)  & 1.00e+05 & 1.00e+05 & 1.00e+05 & 3.70e+04  & 3.06e+03 & 938 
					\\
					&  &DP ($\vg$)& 2.00e+05 & 2.00e+05 & 2.00e+05 & 7.40e+04 & 2.00e+05 & 1.85e+03\\
					& &{Time (s)}  & 285 & 178 & 236 & 127 & 273 & 32.9 \\
					&	& FV& 6.91e-03 & 6.79e-03 & 6.85e-03 & 6.31e-03 & 6.49e-03 & 6.41e-03 \\
					&	& CVio& 9.27e-06 & 5.65e-06 & 2.17e-06 &  1.49e-05 & 5.66e-04 & 0\\
					&	& SVio& 1.65e-03 & 1.52e-03 & 1.60e-03  & 9.99e-04 & 1.43e-02   &4.77e-03 \\
					\hline
					\multirow{12}{*}{\rotatebox{90}{Problem 2}} &  \multirow{6}{*}{\rotatebox{90}{a9a}} &iteration& 4.02e+04 & 3.60e+05 & 3.60e+05 & 2.10e+04 & 5.10e+04 & 9.90e+03 \\
					& &DP ($f$)  & 4.02e+04 & 3.60e+05 & 3.60e+05 & 2.10e+04 & 282 & 55
					\\
					&	  &DP ($\vg$)  & 8.04e+04 & 7.20e+05 & 7.20e+05 & 4.20e+04 & 5.13e+04 & 110 \\
					& &{Time (s)} & 305 & 2.35e+03 & 7.76e+03 & 217 & 134 & 35.6 \\
					&	& FV& 5.11e-01 & 5.07e-01 & 5.14e-01 & 5.05e-01 & 5.07e-01 & 5.10e-01 \\
					&	& CVio& 5.11e-05 & 0  & 0 & 0 & 0 &  0 \\
					&	& SVio& 9.58e-04 & 2.72e-03 & 1.15e-02 & 8.58e-04 & 1.99e-03 & 2.19e-03 \\
					\cline{2-9}
					%\multirow{5}{*}{bank}&iteration& 3.60e+05 & 3.60e+05 & 3.60e+05 & 3.70e+04 & 1.00e+05 \\
					% &data passes& 7.20e+05 & 7.20e+05 & 7.20e+05 & 7.40e+04 & 549 \\
					%	& FV& 2.62e-01 & 3.12e-01 & 3.15e-01 & 2.59e-01 & 2.60e-01 \\
					%	& CVio& 2.97e-05 & 0 & 0& 0 &0  \\
					%	& SVio& 1.64e-03 & 1.74e-03 & 1.58e-03 & 9.99e-04 & 1.92e-03 \\
					%	\hline
					&  \multirow{6}{*}{\rotatebox{90}{COMPAS}} & iteration & 2.89e+05 & 3.60e+05 & 3.10e+05 &  1.54e+05 & 1.15e+05 & 1.39e+05 \\
					& &DP ($f$)   & 2.89e+05 & 3.60e+05 & 3.10e+05 &  1.54e+05 & 1.80e+03 &  2.17e+03
					\\
					&  &DP ($\vg$)  & 5.78e+05 & 7.20e+05 & 6.20e+05 &  3.07e+05 & 1.17e+05 &  4.35e+03 \\
					& &{Time (s)}  & 60.0  & 57.5 & 156 & 40.9 & 27.3 & 23.8 \\
					&	& FV&2.04e-01 & 2.04e-01 & 2.04e-01 & 2.04e-01  &2.06e-01 & 2.06e-01 \\
					&	& CVio& 0  & 0 & 0& 0  &1.24e-05 & 0  \\
					&	& SVio& 8.54e-04 & 8.30e-04 & 7.99e-04 &  9.43e-04  &4.06e-03 &  4.70e-03 \\
					\hline
				\end{tabular}
				\caption{Comparisons  among different methods on solving Problems 1 and 2.}\label{tab:2}
			\end{table}

						{\color{blue} In~\cref{fig:all_deter_pro3}, we solve Problem 3 on the \verb|MNIST| dataset and present %numerical comparisons 
						the values of FV, CVio, and SVio versus CPU time for SSG, IPP-ConEx, IPP-SSG, and our proposed   methods. %{\color{red}Do you mean CPU time or run time?}
						 Overall, our methods %exhibit rapid convergence, 
						reach lower FV and SVio values more quickly compared to other methods, while {achieving feasibility at most iterates}. Specifically, 3S-Econ-S attains an SVio of $1.29\times 10^{-3}$ at   21.9 seconds, and 3S-Econ-D reaches an SVio of $1.53\times 10^{-3}$ at   172 seconds,  significantly outperforming SSG, which achieves its lowest SVio of $1.07\times 10^{-2}$ at about 225 seconds. Additionally, IPP-ConEx remains stagnant at the zero solution, likely due to the zero point being a stationary solution of its proximal subproblem. }
							\begin{figure} 
									\vspace{-3mm} 
							\begin{center}
								%\subfloat[  a9a]
								{\includegraphics[width=0.31\textwidth]{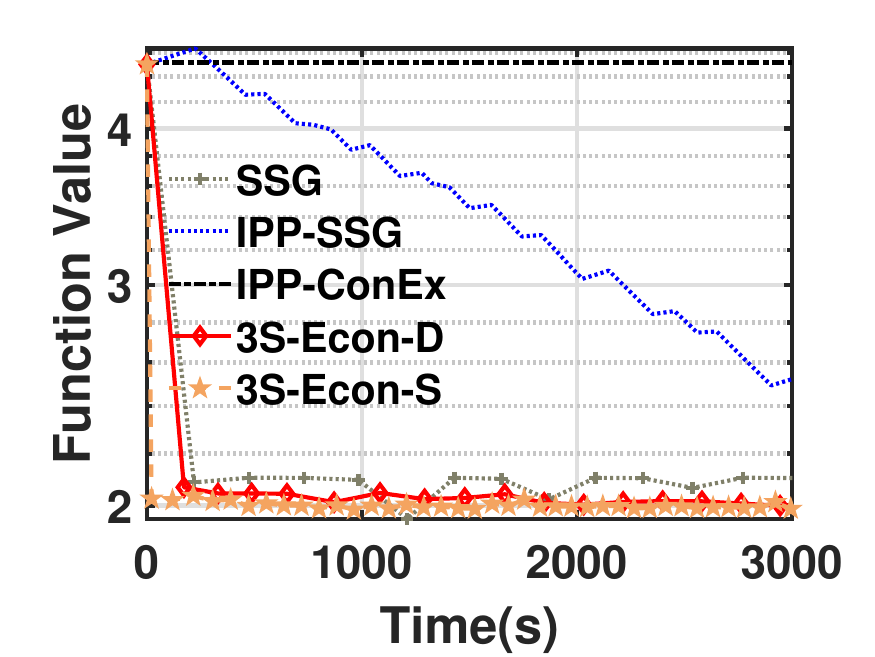}}
								%\subfloat[  a9a]
								{\includegraphics[width=0.31\textwidth]{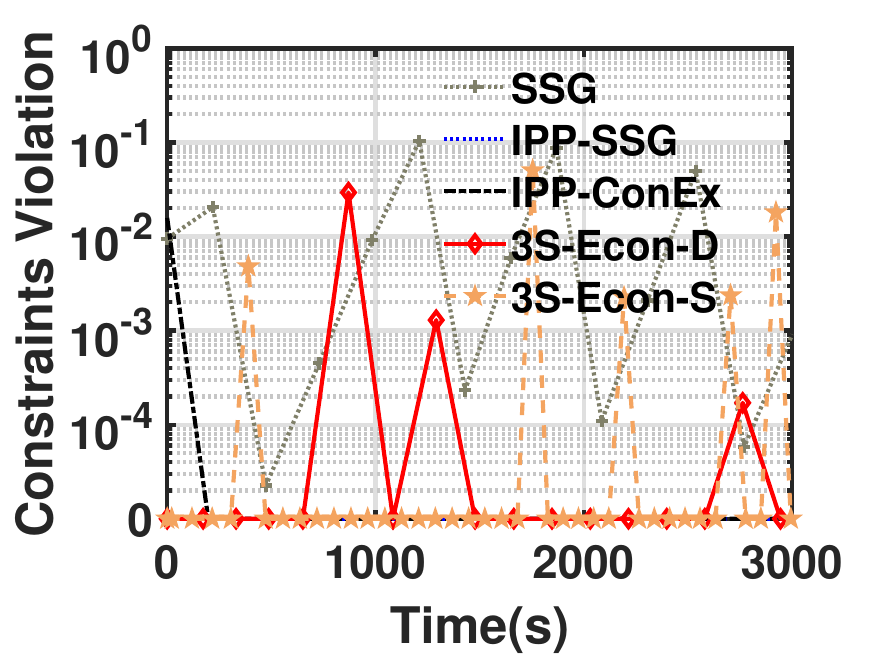}}
								%\subfloat[  a9a]
								{\includegraphics[width=0.31\textwidth]{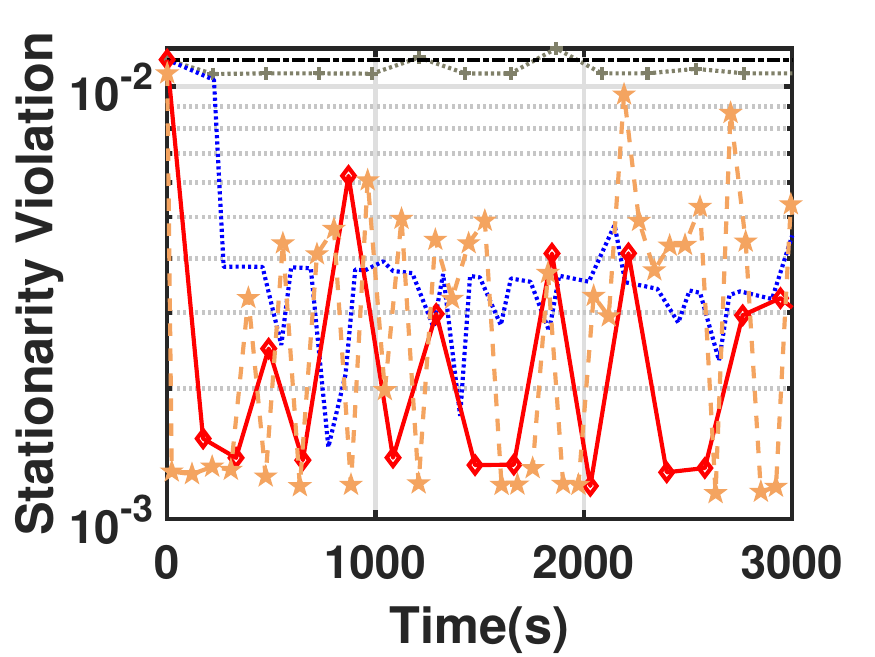}}
								\vspace{-3mm} 
							\end{center}
							\caption{Comparisons of different methods on Problem 3 by using \texttt{MNIST} dataset.}\label{fig:all_deter_pro3}
							\vspace{-5mm}
						\end{figure}

\section{Conclusions and Discussions}\label{sec:conclu}
                       %\red{The last sentence is not in the right position; we should delete one sentence in this paragraph.}
We have presented a novel FOM for solving nonconvex nonsmooth stochastic optimization with expectation constraints. Built on an exact penalty formulation that applies a smoothed penalty function on the inequality expectation constraint, our method incorporates the SPIDER-type estimation of the constraint function into the stochastic subgradient method. Through establishing the equivalence between the near-stationary solution of the penalty problem and the near KKT solution of the original problem, we achieve an optimal $O(\epsilon^{-4})$ iteration complexity result to produce an $(\epsilon,\epsilon)$-KKT point. In terms of sample complexity, our result is lower than a few state-of-the-art results by a factor of $\epsilon^{-2}$ in either constraint sample subgradient or constraint function values, or by a factor of $\sqrt{N}$ when the constraint is the average of $N$ sample functions. Moreover, our method delivers significantly superior numerical performance over state-of-the-art methods on solving fairness-constrained problems and Neyman-Pearson classification problem.

{Arjevani et al.~\cite{arjevani2023lower} establish a lower bound of order $\Omega(L\,\epsilon^{-4})$ for finding an $\epsilon$-stationary point (in expectation) for unconstrained smooth nonconvex stochastic optimization with $L$-Lipschitz gradients. For our penalized objective, assume in addition that $f$ and each $g_i$ are smooth with $L_f$- and $L_g$-Lipschitz gradients. A straightforward calculation shows that $\nabla F^{\nu}$ is $(L_f + \beta(l_g^2 + D L_g l_g)/\nu)$-Lipschitz continuous. Because our exact penalization requires $\nu=O(\epsilon)$, the effective Lipschitz constant scales as $\Omega(1/\epsilon)$; substituting it into the $\Omega(L\epsilon^{-4})$ bound suggests a lower bound of order $\Omega(\epsilon^{-5})$ for finding an $(\epsilon,\epsilon)$-KKT point of problem \cref{eq:model}, matching the upper bound results in~\cite{li2024stochastic}. This derivation relies on additional smoothness assumptions, but we conjecture that a total $\Omega(\epsilon^{-5})$ lower bound holds more broadly for penalty–based methods on problem \cref{eq:model}.
This observation is informative when compared with our upper bounds: it indicates that the iteration/gradient complexity (on the order of $\epsilon^{-4}$) can be substantially smaller than the \emph{total} complexity---$\Omega(\epsilon^{-5})$ if the conjectured lower bound holds, and $ {O}(\epsilon^{-6})$ in our analysis. These discussions raise two directions to pursue in the future: (i) proving tight lower bounds of  total complexity for stochastic FOMs on solving nonsmooth problems in the form of~\cref{eq:model}; and (ii) designing FOMs with $ {O}(\epsilon^{-5})$ CFC and ${O}(\epsilon^{-4})$ CGC/OGC. %, thereby narrowing the gap between lower and upper bounds. %We leave them for future work.
}

			\section*{Acknowledgement}				
			We would like to express our sincere gratitude to Dr.~Yankun Huang and Prof. Qihang Lin for providing their code in~\cite{huang2023single}. 
			We also thank the three anonymous referees and the associate editor for their constructive comments.

			\bibliographystyle{siamplain}
			\bibliography{optim.bib}

\begin{thebibliography}{10}

\bibitem{agarwal2018reductions}
{\sc A.~Agarwal, A.~Beygelzimer, M.~Dud{\'\i}k, J.~Langford, and H.~Wallach},
  {\em A reductions approach to fair classification}, in International
  Conference on Machine Learning, PMLR, 2018, pp.~60--69.

\bibitem{alacaoglu2024complexity}
{\sc A.~Alacaoglu and S.~J. Wright}, {\em Complexity of single loop algorithms
  for nonlinear programming with stochastic objective and constraints}, in
  International Conference on Artificial Intelligence and Statistics, PMLR,
  2024, pp.~4627--4635.

\bibitem{andreani2011sequential}
{\sc R.~Andreani, G.~Haeser, and J.~Mart{\'\i}nez}, {\em On sequential
  optimality conditions for smooth constrained optimization}, Optimization, 60
  (2011), pp.~627--641.

\bibitem{andreani2018strict}
{\sc R.~Andreani, J.~Mart{\'\i}nez, A.~Ramos, and P.~Silva}, {\em Strict
  constraint qualifications and sequential optimality conditions for
  constrained optimization}, Mathematics of Operations Research, 43 (2018),
  pp.~693--717.

\bibitem{angwin2022machine}
{\sc J.~Angwin, J.~Larson, S.~Mattu, and L.~Kirchner}, {\em Machine bias}, in
  Ethics of Data and Analytics, Auerbach Publications, 2022, pp.~254--264.

\bibitem{arjevani2023lower}
{\sc Y.~Arjevani, Y.~Carmon, J.~C. Duchi, D.~J. Foster, N.~Srebro, and
  B.~Woodworth}, {\em Lower bounds for non-convex stochastic optimization},
  Mathematical Programming, 199 (2023), pp.~165--214.

\bibitem{aybat2011first}
{\sc N.~S. Aybat and G.~Iyengar}, {\em A first-order smoothed penalty method
  for compressed sensing}, SIAM Journal on Optimization, 21 (2011),
  pp.~287--313.

\bibitem{bayandina2018mirror}
{\sc A.~Bayandina, P.~Dvurechensky, A.~Gasnikov, F.~Stonyakin, and A.~Titov},
  {\em Mirror descent and convex optimization problems with non-smooth
  inequality constraints}, Large-scale and Distributed Optimization,  (2018),
  pp.~181--213.

\bibitem{bertsekas1997nonlinear}
{\sc D.~P. Bertsekas}, {\em Nonlinear programming}, Journal of the Operational
  Research Society, 48 (1997), pp.~334--334.

\bibitem{boob2023stochastic}
{\sc D.~Boob, Q.~Deng, and G.~Lan}, {\em Stochastic first-order methods for
  convex and nonconvex functional constrained optimization}, Mathematical
  Programming, 197 (2023), pp.~215--279.

\bibitem{burke2013epi}
{\sc J.~V. Burke and T.~Hoheisel}, {\em Epi-convergent smoothing with
  applications to convex composite functions}, SIAM Journal on Optimization, 23
  (2013), pp.~1457--1479.

\bibitem{burke2017epi}
{\sc J.~V. Burke and T.~Hoheisel}, {\em Epi-convergence properties of smoothing
  by infimal convolution}, Set-Valued and Variational Analysis, 25 (2017),
  pp.~1--23.

\bibitem{cartis2014complexity}
{\sc C.~Cartis, N.~Gould, and P.~L. Toint}, {\em On the complexity of finding
  first-order critical points in constrained nonlinear optimization},
  Mathematical Programming, 144 (2014), pp.~93--106.

\bibitem{chen1996class}
{\sc C.~Chen and O.~L. Mangasarian}, {\em A class of smoothing functions for
  nonlinear and mixed complementarity problems}, Computational Optimization and
  Applications, 5 (1996), pp.~97--138.

\bibitem{chen2012smoothing}
{\sc X.~Chen}, {\em Smoothing methods for nonsmooth, nonconvex minimization},
  Mathematical programming, 134 (2012), pp.~71--99.

\bibitem{chen2016penalty}
{\sc X.~Chen, Z.~Lu, and T.~K. Pong}, {\em Penalty methods for a class of
  non-lipschitz optimization problems}, SIAM Journal on Optimization, 26
  (2016), pp.~1465--1492.

\bibitem{chouldechova2017fair}
{\sc A.~Chouldechova}, {\em Fair prediction with disparate impact: A study of
  bias in recidivism prediction instruments}, Big Data, 5 (2017), pp.~153--163.

\bibitem{clarke1990optimization}
{\sc F.~H. Clarke}, {\em Optimization and nonsmooth analysis}, SIAM, 1990.

\bibitem{cui2025two}
{\sc Y.~Cui, X.~Wang, and X.~Xiao}, {\em A two-phase stochastic momentum-based
  algorithm for nonconvex expectation-constrained optimization}, Journal of
  Scientific Computing, 104 (2025), pp.~1--27.

\bibitem{davis2019stochastic}
{\sc D.~Davis and D.~Drusvyatskiy}, {\em Stochastic model-based minimization of
  weakly convex functions}, SIAM Journal on Optimization, 29 (2019),
  pp.~207--239.

\bibitem{drusvyatskiy2019efficiency}
{\sc D.~Drusvyatskiy and C.~Paquette}, {\em Efficiency of minimizing
  compositions of convex functions and smooth maps}, Mathematical Programming,
  178 (2019), pp.~503--558.

\bibitem{facchinei2021ghost}
{\sc F.~Facchinei, V.~Kungurtsev, L.~Lampariello, and G.~Scutari}, {\em Ghost
  penalties in nonconvex constrained optimization: Diminishing stepsizes and
  iteration complexity}, Mathematics of Operations Research, 46 (2021),
  pp.~595--627.

\bibitem{fan2001variable}
{\sc J.~Fan and R.~Li}, {\em Variable selection via nonconcave penalized
  likelihood and its oracle properties}, Journal of the American statistical
  Association, 96 (2001), pp.~1348--1360.

\bibitem{fang2018spider}
{\sc C.~Fang, C.~J. Li, Z.~Lin, and T.~Zhang}, {\em Spider: Near-optimal
  non-convex optimization via stochastic path-integrated differential
  estimator}, {Advances in Neural Information Processing Systems}, 31 (2018).

\bibitem{grimmer2025goldstein}
{\sc B.~Grimmer and Z.~Jia}, {\em Goldstein stationarity in lipschitz
  constrained optimization}, Optimization Letters, 19 (2025), pp.~425--435.

\bibitem{hardt2016equality}
{\sc M.~Hardt, E.~Price, and N.~Srebro}, {\em Equality of opportunity in
  supervised learning}, {Advances in Neural Information Processing Systems}, 29
  (2016).

\bibitem{huang2023single}
{\sc Y.~Huang and Q.~Lin}, {\em Oracle complexity of single-loop switching
  subgradient methods for non-smooth weakly convex functional constrained
  optimization}, {Advances in Neural Information Processing Systems}, 36
  (2023), pp.~61327--61340.

\bibitem{jia2025first}
{\sc Z.~Jia and B.~Grimmer}, {\em First-order methods for nonsmooth nonconvex
  functional constrained optimization with or without slater points}, SIAM
  Journal on Optimization, 35 (2025), pp.~1300--1329.

\bibitem{jordan2023deterministic}
{\sc M.~Jordan, G.~Kornowski, T.~Lin, O.~Shamir, and M.~Zampetakis}, {\em
  Deterministic nonsmooth nonconvex optimization}, in The Thirty Sixth Annual
  Conference on Learning Theory, PMLR, 2023, pp.~4570--4597.

\bibitem{kohavi1996scaling}
{\sc R.~Kohavi et~al.}, {\em Scaling up the accuracy of naive-bayes
  classifiers: A decision-tree hybrid.}, in Kdd, vol.~96, 1996, pp.~202--207.

\bibitem{lan2013iteration-penalty}
{\sc G.~Lan and R.~D. Monteiro}, {\em Iteration-complexity of first-order
  penalty methods for convex programming}, Mathematical Programming, 138
  (2013), pp.~115--139.

\bibitem{lan2016iteration-alm}
{\sc G.~Lan and R.~D. Monteiro}, {\em Iteration-complexity of first-order
  augmented {{Lagrangian}} methods for convex programming}, Mathematical
  Programming, 155 (2016), pp.~511--547.

\bibitem{lan2020algorithms}
{\sc G.~Lan and Z.~Zhou}, {\em Algorithms for stochastic optimization with
  function or expectation constraints}, Computational Optimization and
  Applications, 76 (2020), pp.~461--498.

\bibitem{lecun1998mnist}
{\sc Y.~LeCun}, {\em The {MNIST} database of handwritten digits},  (1998).

\bibitem{li2024model}
{\sc G.~Li, W.~Yu, Y.~Yao, W.~Tong, Y.~Liang, Q.~Lin, and T.~Yang}, {\em Model
  developmental safety: A safety-centric method and applications in
  vision-language models},  (2024).

\bibitem{li2023softplus}
{\sc M.~Li, P.~Grigas, and A.~Atamt{\"u}rk}, {\em On the softplus penalty for
  large-scale convex optimization}, Operations Research Letters, 51 (2023),
  pp.~666--672.

\bibitem{li2025new}
{\sc M.~Li, P.~Grigas, and A.~Atamt{\"u}rk}, {\em New penalized stochastic
  gradient methods for linearly constrained strongly convex optimization},
  Journal of Optimization Theory and Applications, 205 (2025), p.~29.

\bibitem{li2024stochastic}
{\sc Z.~Li, P.-Y. Chen, S.~Liu, S.~Lu, and Y.~Xu}, {\em Stochastic inexact
  augmented {{Lagrangian}} method for nonconvex expectation constrained
  optimization}, Computational Optimization and Applications, 87 (2024),
  pp.~117--147.

\bibitem{lin2022complexity}
{\sc Q.~Lin, R.~Ma, and Y.~Xu}, {\em Complexity of an inexact proximal-point
  penalty method for constrained smooth non-convex optimization}, Computational
  Optimization and Applications, 82 (2022), pp.~175--224.

\bibitem{lin2018level-ICML}
{\sc Q.~Lin, R.~Ma, and T.~Yang}, {\em Level-set methods for finite-sum
  constrained convex optimization}, in International Conference on Machine
  Learning, 2018, pp.~3112--3121.

\bibitem{liu2025stochastic}
{\sc W.~Liu, M.~Khan, G.~Mancino-Ball, and Y.~Xu}, {\em A stochastic smoothing
  framework for nonconvex-nonconcave min-sum-max problems with applications to
  wasserstein distributionally robust optimization}, Preprint,
  arXiv:2502.17602,  (2025).

\bibitem{liu2022linearly}
{\sc W.~Liu, X.~Liu, and X.~Chen}, {\em Linearly constrained nonsmooth
  optimization for training autoencoders}, SIAM Journal on Optimization, 32
  (2022), pp.~1931--1957.

\bibitem{liu2022inexact}
{\sc W.~Liu, X.~Liu, and X.~Chen}, {\em An inexact augmented {Lagrangian}
  algorithm for training leaky {ReLU} neural network with group sparsity},
  Journal of Machine Learning Research, 24 (2023), pp.~1--43.

\bibitem{liu2025single}
{\sc W.~Liu and Y.~Xu}, {\em A single-loop spider-type stochastic subgradient
  method for expectation-constrained nonconvex nonsmooth optimization},
  Preprint, arXiv:2501.19214, v1,  (2025).

\bibitem{liu2016iALM}
{\sc Y.~Liu, X.~Liu, and S.~Ma}, {\em On the non-ergodic convergence rate of an
  inexact augmented {{Lagrangian}} framework for composite convex programming},
  Preprint, arXiv:1603.05738,  (2016).

\bibitem{ma2020quadratically}
{\sc R.~Ma, Q.~Lin, and T.~Yang}, {\em Quadratically regularized subgradient
  methods for weakly convex optimization with weakly convex constraints}, in
  International Conference on Machine Learning, PMLR, 2020, pp.~6554--6564.

\bibitem{martinez2003practical}
{\sc J.~Mart{\'\i}nez and B.~Svaiter}, {\em A practical optimality condition
  without constraint qualifications for nonlinear programming}, Journal of
  Optimization Theory and Applications, 118 (2003), pp.~117--133.

\bibitem{nedich2023huber}
{\sc A.~Nedi{\'c} and T.~Tatarenko}, {\em Huber loss-based penalty approach to
  problems with linear constraints}, arXiv preprint arXiv:2311.00874,  (2023).

\bibitem{nguyen2017sarah}
{\sc L.~M. Nguyen, J.~Liu, K.~Scheinberg, and M.~Tak{\'a}{\v{c}}}, {\em
  {SARAH}: A novel method for machine learning problems using stochastic
  recursive gradient}, in International Conference on Machine Learning, PMLR,
  2017, pp.~2613--2621.

\bibitem{nocedal2006numerical}
{\sc J.~Nocedal and S.~Wright}, {\em Numerical optimization}, Springer Science
  \& Business Media, 2006.

\bibitem{di1989exact}
{\sc G.~D. Pillo and L.~Grippo}, {\em Exact penalty functions in constrained
  optimization}, SIAM Journal on control and optimization, 27 (1989),
  pp.~1333--1360.

\bibitem{Polyak1967ssg}
{\sc B.~Polyak}, {\em A general method for solving extremum problems}, Soviet
  Mathematics. Doklady, 8 (1967).

\bibitem{rigollet2011neyman}
{\sc P.~Rigollet and X.~Tong}, {\em Neyman-pearson classification, convexity
  and stochastic constraints}, Journal of Machine Learning Research,  (2011).

\bibitem{shi2025momentum}
{\sc Q.~Shi, X.~Wang, and H.~Wang}, {\em A momentum-based linearized augmented
  lagrangian method for nonconvex constrained stochastic optimization},
  Mathematics of Operations Research,  (2025).

\bibitem{tatarenko2021smooth}
{\sc T.~Tatarenko and A.~Nedich}, {\em A smooth inexact penalty reformulation
  of convex problems with linear constraints}, SIAM Journal on Optimization, 31
  (2021), pp.~2141--2170.

\bibitem{vogel2021learning}
{\sc R.~Vogel, A.~Bellet, and S.~Cl{\'e}men{\c{c}}on}, {\em Learning fair
  scoring functions: Bipartite ranking under {ROC}-based fairness constraints},
  in International Conference on Artificial Intelligence and Statistics, PMLR,
  2021, pp.~784--792.

\bibitem{wang2017penalty}
{\sc X.~Wang, S.~Ma, and Y.~Yuan}, {\em Penalty methods with stochastic
  approximation for stochastic nonlinear programming}, Mathematics of
  Computation, 86 (2017), pp.~1793--1820.

\bibitem{xiao2024developing}
{\sc N.~Xiao, K.~Ding, X.~Hu, and K.-C. Toh}, {\em Developing
  {{Lagrangian}}-based methods for nonsmooth nonconvex optimization}, Preprint,
  arXiv:2404.09438,  (2024).

\bibitem{xu2020primal}
{\sc Y.~Xu}, {\em Primal-dual stochastic gradient method for convex programs
  with many functional constraints}, SIAM Journal on Optimization, 30 (2020),
  pp.~1664--1692.

\bibitem{xu2021first-ALM}
{\sc Y.~Xu}, {\em First-order methods for constrained convex programming based
  on linearized augmented {{Lagrangian}} function}, INFORMS Journal on
  Optimization, 3 (2021), pp.~89--117.

\bibitem{yan2022adaptive}
{\sc Y.~Yan and Y.~Xu}, {\em Adaptive primal-dual stochastic gradient method
  for expectation-constrained convex stochastic programs}, Mathematical
  Programming Computation, 14 (2022), pp.~319--363.

\bibitem{yang2025single}
{\sc M.~Yang, G.~Li, Q.~Hu, Q.~Lin, and T.~Yang}, {\em Single-loop algorithms
  for stochastic non-convex optimization with weakly-convex constraints},
  Preprint, arXiv:2504.15243,  (2025).

\bibitem{yu2017online}
{\sc H.~Yu, M.~Neely, and X.~Wei}, {\em Online convex optimization with
  stochastic constraints}, {Advances in Neural Information Processing Systems},
  30 (2017).

\bibitem{yu2017simple}
{\sc H.~Yu and M.~J. Neely}, {\em A simple parallel algorithm with an o(1/t)
  convergence rate for general convex programs}, SIAM Journal on Optimization,
  27 (2017), pp.~759--783.

\bibitem{zhang2022stochastic}
{\sc J.~Zhang and L.~Xiao}, {\em Stochastic variance-reduced prox-linear
  algorithms for nonconvex composite optimization}, Mathematical Programming,
  (2022), pp.~1--43.

\end{thebibliography}

			\appendix
			\normalsize

			\section{Proof of \cref{lem:necessary}}\label{appen:1}
			%As mentioned in~\Cref{sec:regurela}, the strong feasibility condition is stronger than the  \mbox{Slate-type CQ}  stated in~\Cref{def:2}.
			%In the following lemmas, we demonstrate that both conditions in~\Cref{ass:cq} can be implied by the  \mbox{Slate-type CQ} under mild conditions. %is less restrictive than the uniform Slater-type CQ in the convex case. We  also establish that~\Cref{ass:cq}(ii) is less stringent than that  CQ.			 
			 
			\begin{proof}
			%Let $\mathcal{J}_3 = \{ i : 0 \leq g_i(\vx) \leq B_g \}$. Since $0 < g_+(\vx) \leq B_g$, we have $g_i(\vx) \le [g_i(\vx)]_+  \leq B_g, \forall\,i \in[m]$. Additionally, by  $\mathcal{J}_1 \cap \{ i : g_i(\vx) \geq 0 \} \neq \emptyset$, we have $\mathcal{J}_1 \cap \mathcal{J}_3 \neq \emptyset$, and the set difference $\mathcal{J}_1 \cap \mathcal{J}_3^c \subset \{ i : g_i(\vx) < 0 \}$.
			
			{ Let $\mathcal{J}_3 = \{ i :  g_i(\vx) \ge 0\}$. Then by the condition on $\cJ_1$, it holds $\mathcal{J}_1 \cap \mathcal{J}_3 \neq \emptyset$, and the set difference $\mathcal{J}_1 \cap \mathcal{J}_3^c \subset \{ i : g_i(\vx) < 0 \}$.}
			Since $0<g_+(\vx) \leq B_g$, then by \cref{def:2}, there exists $\mathbf{y} \in \mathrm{relint}(\mathcal{X})$ such that 
			$
			g_i(\mathbf{y}) + \frac{\overline{\rho}}{2}\|\mathbf{y} - \mathbf{x}\|^2 \leq -B
			$ for all $i \in [m]$.
			Now, for any $\boldsymbol{\zeta}_{g_i} \in \partial [g_i(\vx)]_+, {i \in \mathcal{J}_1}$, any $ \boldsymbol{\zeta}_{g_i} \in \partial g_i(\vx), {i \in \mathcal{J}_2}$,  and any $\vu \in \mathcal{N}_{\mathcal{X}}(\vx)$, we have
				\begin{align} \label{eq:lem:necessary-ineq1}	
					& \sum_{i\in\mathcal{J}_1}  [g_i(\mathbf{x})]_++\sum_{i\in\mathcal{J}_2} \lb_i g_i(\mathbf{x})
					+   \left \langle  \sum_{i\in J_1}\vzeta_{g_i}+\sum_{i\in\cJ_2}\lb_i \vzeta_{g_i}+\vu, \mathbf{y}-\mathbf{x} \right \rangle
					+ \frac{\overline{\rho}-\rho_g}{2}\|\mathbf{y}-\mathbf{x}\|^2 \nonumber
					\\
					=&\sum_{i\in\mathcal{J}_1\cap \cJ_3}  \left(\textstyle g_i(\mathbf{x})+\frac{\overline{\rho}}{2}\|\mathbf{x}-\mathbf{x}\|^2 \right)+  \textstyle \left \langle \sum_{i\in \cJ_1\cap\cJ_3}\vzeta_{g_i}+\vu, \mathbf{y}-\mathbf{x} \right\rangle
					+\frac{\overline{\rho}-\rho_g}{2}\|\mathbf{y}-\mathbf{x}\|^2 \nonumber
					\\
					& \quad +\sum_{i\in\mathcal{J}_2} \lb_i \left( \textstyle g_i(\mathbf{x})+\frac{\overline{\rho}}{2}\|\mathbf{x}-\mathbf{x}\|^2 \right) 
					+ \left \langle  \textstyle \sum_{i\in\cJ_2}\lb_i \vzeta_{g_i}, \mathbf{y}-\mathbf{x} \right \rangle \nonumber
					 \\
					 \leq &\sum_{i\in\mathcal{J}_1\cap\cJ_3} \left(g_i(\mathbf{y})+\frac{\overline{\rho}}{2}\|\mathbf{y}-\mathbf{x}\|^2\right) + \sum_{i\in\mathcal{J}_2} \lb_i\left(g_i(\mathbf{y})+\frac{\overline{\rho}}{2}\|\mathbf{y}-\mathbf{x}\|^2\right) \leq-B,
				\end{align}
				where the first equality follows from $[g_i(\mathbf{x})]_+= g_i(\mathbf{x}), \forall \, i\in\mathcal{J}_1\cap \cJ_3,$ and $[g_i(\mathbf{x})]_+= 0$, $\vzeta_{g_i}=\vzero, \forall\, i\in\mathcal{J}_1 \cap \mathcal{J}_3^c$, the first inequality comes from the $(\overline{\rho}-\rho_g)$-strongly convexity of $\sum_{i\in\mathcal{J}_1\cap \cJ_3}  (g_i(\cdot)+\frac{\overline{\rho}}{2}\|\cdot-\mathbf{x}\|^2 )  + \iota_{\mathcal{X}}(\cdot)$ and the convexity of  $\sum_{i\in\mathcal{J}_2} \lb_i \big(g_i(\cdot) + \frac{\overline{\rho}}{2}\|\cdot-\mathbf{x}\|^2\big)$,
				as $\cJ_1\cap \cJ_3\neq \emptyset$ and $\lb_i\ge 0, \forall\,{i\in J_2}$, the last inequality holds by $\cJ_1\cap \cJ_3\neq \emptyset$, $\lb_i\ge 0, \forall\,{i\in J_2}$, and $ g_i(\mathbf{y})+\frac{\overline{\rho}}{2}\|\mathbf{y}-\mathbf{x}\|^2 \leq - B$ for all $i\in[m]$. 
				
				The inequality \cref{eq:lem:necessary-ineq1}, together with Young's inequality, gives %to the above inequality and noticing $g_i(\vx)\ge 0$ for all $i\in\mathcal{J}_1\cap\cJ_3$, and $\lb_i g_i(\vx) \ge 0$ for all $i\in J_2$, {we derive}
                \begin{align*}
                	\textstyle
                &-\frac{ \left\|  \sum_{i\in J_1}\vzeta_{g_i} + \vu + \sum_{i\in\cJ_2}\lb_i \boldsymbol{\vzeta}_{g_i}\right\|^2}{2(\overline{\rho}-\rho_g)} \\
              \textstyle  \leq  &\left\langle  \sum_{i\in J_1}\vzeta_{g_i}+\sum_{i\in\cJ_2}\lb_i \partial g_i(\vx)+\vu, \mathbf{y}-\mathbf{x} \right\rangle
                + \frac{\overline{\rho}-\rho_g}{2}\|\mathbf{y}-\mathbf{x}\|^2  \\
             \textstyle   \leq & -B - \sum_{i\in\mathcal{J}_1}  [g_i(\mathbf{x})]_+ - \sum_{i\in\mathcal{J}_2} \lb_i g_i(\mathbf{x}) \leq -B,
                \end{align*} 
                where the last inequality holds by $g_i(\vx)\ge 0, \forall\, i\in\mathcal{J}_1\cap\cJ_3$, and $\lb_i g_i(\vx) \ge 0, \forall\, i\in J_2$. 
			Since the choice of subgradients and $\vu$ is arbitrary, the proof is completed.
			\end{proof}
			
			%\section{Proof to \cref{prop:weakly}}
		 	\section{Problems 2--3 in \cref{eq:data} satisfy \cref{def:2}}\label{appen:2}
			We first  verify in this section that the fairness-constrained logistic‐loss problem in~\cref{eq:data} satisfies~\cref{def:2}.  
		Let $\rho_g$ and $l_g$ denote the weak convexity modulus and Lipschitz continuity modulus of $\vg$ in Problem~2, respectively.
			Under mild conditions, the following lemma %from \cite[Theorem B.1]{huang2023single} 
			establishes the Slater-type CQ. %with some unknown parameters.
			 \begin{lemma}[{\cite[Theorem B.1]{huang2023single}}]
			 	Suppose that \([\va_i^{\mathrm p}]_1 = 1\) for $i=1,\dots,n_{\mathrm p}$ and
			 	$[\va_i^{\mathrm u}]_1 = -1$ for $i=1,\dots,n_{\mathrm u}$.
			 	Let
			 	\(
			 	\mathcal K
			 	:=
			 	\bigl\{\,\lvert \frac{i}{n_{\mathrm p}} - \frac{j}{n_{\mathrm u}}\rvert :
			 	i=0,\dots,n_{\mathrm p},\;
			 	j=0,\dots,n_{\mathrm u}\bigr\},
			 	\)
			 	and pick $\kappa \notin \mathcal{K}$ such that $\kappa$ is strictly greater than the smallest element of $\mathcal{K}$.
			 	Let $\underline\kappa,\bar\kappa\in\mathcal K$ be the two grid points that bracket $\kappa$ with no other $\mathcal K$-points in $(\underline\kappa,\bar\kappa)$.   
			 	Define
			 	$
			 	\overline\Psi(\vx)
			 	:=
			 	\frac1{n_{\mathrm p}}\sum_{i=1}^{n_{\mathrm p}}\sigma(\vx^\top\va_i^{\mathrm p})
			 	-
			 	\frac1{n_{\mathrm u}}\sum_{i=1}^{n_{\mathrm u}}\sigma(\vx^\top\va_i^{\mathrm u}), \text{ and }
			 	$
			 	$$
			 	q(\vx)
			 	:=\max\left\{
			 	\max_{i=1,\dots,n_{\mathrm p}}
			 	\!\frac{\sigma(\vx^\top\va_i^{\mathrm p})(1-\sigma(\vx^\top\va_i^{\mathrm p}))}{n_{\mathrm p}},
			 	\max_{i=1,\dots,n_{\mathrm u}}
			 	\!\frac{\sigma(\vx^\top\va_i^{\mathrm u})(1-\sigma(\vx^\top\va_i^{\mathrm u}))}{n_{\mathrm u}}\right\}.
			 	$$
			Then the following statements hold:
			\begin{itemize} 
				\item[$\mathrm{(i)}$] %Setting
				Let
				$
				%\underline{q}:
				q_{\mathrm{min}}=\inf \{q(\mathbf{x})| | \overline{\Psi}(\mathbf{x}) \mid \in[(\kappa+\underline{\kappa}) / 2,(\kappa+\bar{\kappa}) / 2]\} .
				$
				It holds $q_{\mathrm{min}}>0$. 
				\item[$\mathrm{(ii)}$] 
				Let $\underline{q}\in (0, q_{\mathrm{min}}]$. Then Problem 2 in \cref{eq:data} satisfies the Slater-type CQ with any $\bar{\rho}>\rho_g$, and with parameters  
				$
				0<B\leq \min \left\{\frac{\kappa-\underline{\kappa}}{2}, \frac{\underline{q}^2}{\rho_g+4\bar{\rho}}, \frac{\underline{q}^2 \kappa}{l_g^2}\right\}
				$ and
				$
				0<B_g \leq \min \left\{\frac{\bar{\kappa}-\kappa}{2}, \frac{\underline{q}^2}{\rho_g+4\bar{\rho}}, \frac{\underline{q}^2 \kappa}{l_g^2}\right\}.
				$
			\end{itemize}
			\end{lemma}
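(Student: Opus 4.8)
The plan is to treat the two parts separately, with part (i) supplying the derivative lower bound needed in part (ii). Throughout I write $g(\vx)=\Psi_0(\vx)-\kappa=|\overline{\Psi}(\vx)|-\kappa$ for the single constraint of Problem~2, recall that $\sigma'=\sigma(1-\sigma)$, and I exploit the sign structure $[\va_i^{\mathrm p}]_1=1$, $[\va_i^{\mathrm u}]_1=-1$, which makes $\overline{\Psi}$ strictly monotone along the first coordinate direction $\ve_1$.

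For part (i), I would argue by contradiction via a compactness/separation argument. Suppose $q_{\min}=0$; then there is a sequence $\{\vx_n\}$ with $|\overline{\Psi}(\vx_n)|\in[\tfrac{\kappa+\underline{\kappa}}{2},\tfrac{\kappa+\bar{\kappa}}{2}]$ and $q(\vx_n)\to 0$. Since $q$ is the maximum of the nonnegative quantities $\tfrac1{n_{\mathrm p}}\sigma(\vx^\top\va_i^{\mathrm p})(1-\sigma(\vx^\top\va_i^{\mathrm p}))$ and $\tfrac1{n_{\mathrm u}}\sigma(\vx^\top\va_i^{\mathrm u})(1-\sigma(\vx^\top\va_i^{\mathrm u}))$, having $q(\vx_n)\to 0$ forces every $\sigma(\vx_n^\top\va_i^{\mathrm p})$ and $\sigma(\vx_n^\top\va_i^{\mathrm u})$ to approach $0$ or $1$. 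Passing to a subsequence so that each converges, the limit of $\overline{\Psi}(\vx_n)$ is of the form $\tfrac{i}{n_{\mathrm p}}-\tfrac{j}{n_{\mathrm u}}$, whence $\lim|\overline{\Psi}(\vx_n)|\in\mathcal K$. This contradicts $|\overline{\Psi}(\vx_n)|$ being trapped in $[\tfrac{\kappa+\underline{\kappa}}{2},\tfrac{\kappa+\bar{\kappa}}{2}]\subset(\underline{\kappa},\bar{\kappa})$, which is disjoint from $\mathcal K$ by the choice of $\underline{\kappa},\bar{\kappa}$. Hence $q_{\min}>0$.

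For part (ii), fix $\vx\in\cX$ with $0<g(\vx)\le B_g$ and assume $\overline{\Psi}(\vx)>0$ (the case $\overline{\Psi}(\vx)<0$ being symmetric after replacing $\ve_1$ by $-\ve_1$). I would set $\vy(s)=\vx-s\ve_1$ and note that $\tfrac{d}{ds}\overline{\Psi}(\vy(s))=-\big(\tfrac1{n_{\mathrm p}}\sum_i\sigma'(\cdots)+\tfrac1{n_{\mathrm u}}\sum_i\sigma'(\cdots)\big)$ has magnitude at least $q(\vy(s))$, hence at least $\underline{q}$ whenever $|\overline{\Psi}(\vy(s))|\in[\tfrac{\kappa+\underline{\kappa}}{2},\tfrac{\kappa+\bar{\kappa}}{2}]$. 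The bounds $B\le\tfrac{\kappa-\underline{\kappa}}{2}$ and $B_g\le\tfrac{\bar{\kappa}-\kappa}{2}$ guarantee that the target band $[\kappa-B,\kappa+B_g]$ for $|\overline{\Psi}|$ lies inside this interval, so the rate-$\underline{q}$ descent is valid along the whole path. I would then track $G(s):=g(\vy(s))+\tfrac{\overline{\rho}}{2}s^2$, whose derivative is $\le-\underline{q}+\overline{\rho}\,s$ in the band: the cap $B,B_g\le\tfrac{\underline{q}^2}{\rho_g+4\overline{\rho}}$ keeps the step needed to drive $G$ below $-B$ small enough that $\overline{\rho}\,s\le\tfrac{\underline{q}}{2}$, so $G$ stays strictly decreasing, while $B,B_g\le\tfrac{\underline{q}^2\kappa}{l_g^2}$ together with the $l_g$-Lipschitz bound keep the step small enough that $\overline{\Psi}(\vy(s))$ remains positive (so $|\overline{\Psi}|=\overline{\Psi}$) and $\vy(s)$ stays in $\cX$. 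Exhibiting such an $s$ certifies $g(\vy)+\tfrac{\overline{\rho}}{2}\|\vy-\vx\|^2\le-B$, i.e.\ \cref{def:2}; an arbitrarily small perturbation then places $\vy$ in $\mathrm{relint}(\cX)$ using the strict slack.

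The main obstacle will be the bookkeeping in part (ii): I must run a first-exit-time argument showing the iterate $\vy(s)$ never leaves the band $[\tfrac{\kappa+\underline{\kappa}}{2},\tfrac{\kappa+\bar{\kappa}}{2}]$, where alone the derivative lower bound $\underline{q}$ from part (i) is available, before $G(s)$ reaches $-B$, all the while balancing the descent rate against the quadratic penalty $\tfrac{\overline{\rho}}{2}s^2$. The three upper bounds on $B$ and $B_g$ are exactly the conditions that close this balance, so the delicate point is verifying that they are \emph{jointly} sufficient rather than deriving any single one in isolation.
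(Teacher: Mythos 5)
Your part (i) is correct and complete: the compactness argument lives in $[0,1]^{n_{\mathrm p}+n_{\mathrm u}}$ for the sigmoid values, whose subsequential limits must lie in $\{0,1\}$ once $q(\vx_n)\to 0$, forcing $\lim|\overline\Psi(\vx_n)|\in\mathcal K$ while the sequence is confined to $[(\kappa+\underline\kappa)/2,(\kappa+\bar\kappa)/2]\subset(\underline\kappa,\bar\kappa)$, an interval disjoint from $\mathcal K$ by the choice of the bracketing grid points. Note, for calibration, that the paper itself offers no proof of this lemma: it is imported verbatim from Theorem B.1 of \cite{huang2023single} (the appendix later points to steps (83)--(85) of that proof), and the paper's own contribution is only the computable uniform bound $q(\vx)\ge\tilde q$ on $\cX$ permitting $\underline q=\tilde q$. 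So your attempt is judged against that external argument, and in part (ii) it has two genuine gaps.

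First, feasibility of the path is not controlled. In Problem 2 the set is $\cX=\{\vx:\|\vx\|_\infty\le D\}$, and nothing prevents a point with $0<g(\vx)\le B_g$, $\overline\Psi(\vx)>0$, and $[\vx]_1=-D$; your ray $\vy(s)=\vx-s\ve_1$ then exits $\cX$ immediately, and $-\ve_1$ is the only direction for which your rate bound is available, since it is tied to the signs $[\va_i^{\mathrm p}]_1=1$, $[\va_i^{\mathrm u}]_1=-1$. Your claim that the cap $B,B_g\le \underline q^2\kappa/l_g^2$ keeps $\vy(s)\in\cX$ cannot be right as stated: that cap involves only translation-invariant data of $g$ ($\underline q$, $\kappa$, $l_g$), whereas box membership depends on where $\vx$ sits in $\cX$. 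A tell-tale sign that the intended mechanism differs is that $\rho_g$ never enters your sketch, yet it appears in the cap $\underline q^2/(\rho_g+4\bar\rho)$; the cited proof works with the full gradient norm bound $\|\nabla g(\vx)\|\ge q(\vx)$ (via the first coordinate, as the paper does for Problem 3) combined with weak-convexity/subgradient inequalities, rather than integrating along a fixed coordinate ray. Second, the first-exit bookkeeping you defer does not close with your own constants: your self-consistency condition $\bar\rho s\le\underline q/2$ requires $B+B_g\le\underline q^2/(4\bar\rho)$, but the stated caps allow $B+B_g$ up to $2\underline q^2/(\rho_g+4\bar\rho)$, which exceeds $\underline q^2/(4\bar\rho)$ whenever $\rho_g<4\bar\rho$, so you only get $\bar\rho s^\ast\le\underline q$; moreover, with $s^\ast\le 2(B+B_g)/\underline q$ the penalty term $\frac{\bar\rho}{2}(s^\ast)^2$ can be as large as $B+B_g$, so the stopping level $\kappa-B-\frac{\bar\rho}{2}s^2$ can be as low as $\kappa-2B-B_g\ge\underline\kappa-(\bar\kappa-\kappa)/2$, which lies strictly below the band floor $(\kappa+\underline\kappa)/2$ where the $\underline q$ rate is no longer available. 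These are exactly the joint-sufficiency checks you flagged but did not perform, and as written they fail; either sharper tracking of the band (stopping at the first time $G(s)\le-B$, before the floor is reached) or the subgradient-inequality route of the original proof is needed to complete part (ii).
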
 
			We note that though $q_{\mathrm{min}}$ is difficult to compute, for our tested Problem 2, %it is possible to 
			we can directly establish a %strictly 
			positive lower bound on $q(\vx)$ uniformly about $\vx\in \cX$.
	% thus %immediately yielding 
	% setting \(\underline{q}\) to this positive lower bound. 
	In particular, let \(D_a=\min\left(\min_{1\le i\le n_{\mathrm p}}\|\va_i^{\mathrm p}\|, \min_{1\le j\le n_{\mathrm u}}\|\va_j^{\mathrm u}\|\right)\), i.e., there is $\va_i^{\mathrm p}$ or $\va_j^{\mathrm u}$ such that $\|\va_i^{\mathrm p}\|=D_a$ or $\|\va_j^{\mathrm u}\|=D_a$. %as the uniform bound for the 2-norm of all data points. Then, for every \(\vx \in \mathcal{X}\), there exists at least one index pair \( i \in \{1,\dots,n_{\mathrm p}\},\, j \in \{1,\dots,n_{\mathrm u}\} \) satisfying
	Since $\|\vx\|\le D,\forall\, \vx\in \cX$, it holds	\mbox{$
		|\vx^{\top}\va^{\mathrm p}_i|\le D_aD$}    {or}   $|\vx^{\top}\va^{\mathrm u}_j|\le D_aD.
		$
		Consequently, we have $q(\vx) \ge \frac{\sigma(D_aD)\bigl(1-\sigma(D_aD)\bigr)}{\max\{n_{\mathrm p},\,n_{\mathrm u}\}} =: \tilde{q},\forall\, \vx\in \cX$, and thus $q_{\mathrm{min}}\ge \tilde{q}$. %Because $\underline{q}$ appears in the numerators in the upper bounds of $B$ and $B_g$, we can set %can explicitly choose the lower bound \(\underline{q}\) as
%		\mbox{$
%		\underline{q} = 
%		\frac{\sigma(D_aD)\bigl(1-\sigma(D_aD)\bigr)}{\max\{n_{\mathrm p},\,n_{\mathrm u}\}},
%		$} and 
		Therefore, we can set $\underline{q}= \tilde{q}$ in the above Statement (ii).
		%since at least one inner product always lies within the interval \([-D_aD,D_aD]\), over which the function \(\sigma(z)\bigl(1-\sigma(z)\bigr)\) is uniformly bounded away from zero.
		
		Next, we verify that Problem~3 defined in~\cref{eq:data} also satisfies~\cref{def:2}. For each $i \in \{2,\ldots,M\}$, let $g_{i-1}$ be the component constraint function that corresponds to $\cD_i$ %denote the $i$-th constraint 
		in Problem~3, and suppose that %for each $\va_i \in \cD_i$ (where $i \in \{2,3,\ldots,M\}$), the first coordinate satisfies 
		$[\va_i]_1 = 1, \forall\, \va_i \in \cD_i$. Under this assumption, one can easily show that for any $i \in \{2,3,\ldots,M\}$,
		\begin{align}
			\|\nabla g_{i-1}(\mathbf{x})\| 
			&\geq \frac{1}{\left|\mathcal{D}_i\right|} \sum^M_{\substack{l=2 \\ l \neq i}} \sum_{\va_i \in \mathcal{D}_i}  \sigma\left(\mathbf{x}_i^{\top} \va_i-\mathbf{x}_l^{\top} \va_i\right)\left(1-\sigma\left(\mathbf{x}_i^{\top} \va_i-\mathbf{x}_l^{\top} \va_i\right)\right)\left[\mathbf{a}_i\right]_1,
		\end{align}
		by the fact $\|\nabla g_{i-1}(\mathbf{x})\|\ge \|\nabla_{[\vx_i]_1} g_{i-1}(\mathbf{x})\|$.
		Let $D_a=\max_{i=2,3,\ldots,M}\{\min_{\va_i\in\cD_i}\|\va_i\|\}$, i.e., for each $i\in\{2,3,\ldots,M\}$, there is $\va_i\in\cD_i$  such that $\|\va_i\|\le D_a$. Since we have $\|\vx\|\le D,\forall\, \vx\in \cX$, it holds	
 $|\mathbf{x}_i^{\top} \va_i-\mathbf{x}_l^{\top} \va_i|\leq 2D_a D$. By an analogous argument, we  establish that for all $i \in \{2,3,\ldots,M\}$,
		\[
		\|\nabla g_{i-1}(\mathbf{x})\| \geq \frac{\sigma(2D_a D)\bigl(1-\sigma(2D_a D)\bigr)}{\max_{i=2,3,\ldots,M}\{|\mathcal{D}_i|\}}=: \tilde{q}.
		\]
		%where { \(D_a\) equals the norm of one data point in $\cup_{i=2}^M \cD_i$.} %a uniform bound for the 2-norm of all data points.
		Hence, following steps (83)--(85) in the proof of~{\cite[Theorem B.1]{huang2023single}}, we are able to show that Problem~3 satisfies the Slater-type  CQ as defined in~\cref{def:2}. We do not repeat the details here.

	\end{document}